\numberwithin{equation}{section}
\newtheorem{theorem}{Theorem}[section]
\newtheorem{lemma}{Lemma}[section]
\newtheorem{proposition}{Proposition}[section]
\newtheorem{example}{Example}[section]
\newtheorem{remark}{Remark}[section]
\newtheorem*{problem}{Problem}
\begin{document}
\title[\textbf{Obstacles to periodic orbits hidden at fixed point}]{
\textbf{Obstacles to periodic orbits hidden at fixed point of holomorphic maps}\protect\footnotemark[2]}
\author{Jianyong Qiao}
\author{Hongyu Qu \textsuperscript{*}}
\address{School of Sciences, Beijing University of Posts and Telecommunications, Beijing
100786, P. R. China. \textit{Email:} \textit{qjy@bupt.edu.cn}}
\address{School of Sciences, Beijing University of Posts and Telecommunications, Beijing
100786, P. R. China. \textit{Email:} \textit{hongyuqu@bupt.edu.cn}}
\renewcommand{\thefootnote}{\fnsymbol{footnote}}
\footnotetext[2]{The two authors are co-first authors.}
\footnotetext[1]{Corresponding author, Email: hongyuqu@bupt.edu.cn}
\maketitle
\begin{abstract}
Let $f:(\mathbb{C}^n,0)\mapsto(\mathbb{C}^n,0)$ be a germ of an $n$-dimensional holomorphic map.
Assume that the origin is an isolated fixed point of each iterate of $f$.
Then $\{\mathcal{N}_q(f)\}_{q=1}^{\infty}$, the sequence of the maximal number of periodic orbits of period $q$ that can be born from the fixed point zero under a small perturbation of $f$, is well defined. According to
Shub--Sullivan, Chow--Mallet--Paret--Yorke and G. Y. Zhang, the linear part of the holomorphic germ $f$ determines some natural restrictions on the sequence(cf. Theorem \ref{T1.2}).
Later, I. Gorbovickis proves that when the linear part of $f$ is contained in a certain large class
of diagonal matrices, it has no other restrictions on the sequence only when the dimension $n\leq2$
(cf. Theorem \ref{T1.3}). 
In this paper for the general case we obtain a sufficient and necessary condition that the linear part of
 $f$ has no other restrictions on the sequence $\{\mathcal{N}_q(f)\}_{q=1}^{\infty}$,
 except the ones given by Theorem \ref{T1.2}.
\end{abstract}
\section{Introduction and main results}
Let $\mathcal{O}(\mathbb{C}^{n},0,0)$ be the space of germs of all holomorphic maps from
$(\mathbb{C}^n,0)$ to $(\mathbb{C}^n,0)$. Assume that $h\in\mathcal{O}(\mathbb{C}^n,0,0)$ and
the origin is an isolated zero of $h$, that is, there exists an $n$-dimensional open ball $B$
centered at the origin such that $h$ is well defined in $B$ and $h^{-1}(0)\cap B=\{0\}$.
\emph{The zero order} of $h$ at the origin is $\pi _{h}(0):=\#(h^{-1}(v)\cap B)$,
where $v$ is a regular value of $h$ such that $|v|$ is small enough and $\#$ denotes the cardinality.
The number $\pi_{h}(0)$ is a well defined integer and is independent of the choice of the open ball $B$
(see \cite{LL} for the details). Let $h\in\mathcal{O}(\mathbb{C}^n,0,0)$ such that
the origin is an isolated fixed point of $h$, that is, an isolated zero of $h-id$,
where $id$ is the identity map. Then \emph{the fixed point index} of $h$ at the origin is
defined by $\mu_{h}(0):=\pi_{h-id}(0)$.

Let $f\in \mathcal{O}(\mathbb{C}^n,0,0)$ and $q$ a positive integer.
Assume that the origin is an isolated fixed point of $f^q$. In \cite{Dol}, A. Dold introduced the number
\[P_q(f,0):=\sum_{s\subset P(q)}(-1)^{\#s}\mu_{f^{q:s}}(0),\]
where $P(q)$ is the set of all prime factors of $q$ and $q:s=q(\prod_{k\in s}k)^{-1}$(If $s=\emptyset$, set $q:s=q$).
The number $P_q(f,0)$ is called \emph{the local Dold index} of $f$ at the origin and
the importance of the number is that it reveals the maximal number of periodic points of period $q$
that can be born from the origin under a small perturbation of $f$. More precisely,
let $U$ be an open subset of $\mathbb{C}^n$ with $0\in U$ such that $f^q$ is well defined in $U$.
In \cite{Zh3} G. Y. Zhang proved that for any ball $B\subset U$ centered at the origin such that $f^q$
has no fixed point in $B$ other than the origin, any $g$ has exactly $P_q(f,0)$
mutually distinct periodic points of period $q$ in $B$, provided that all fixed points of $g^q$ in $B$
are simple and that $g$ is sufficiently close to $f$. Equivalently,
$$P_q(f,0)=\limsup_{||g-f||\rightarrow0}\left|Per_q(g,B)\right|,$$
where $g$ is a holomorphic map from $U$ to $\mathbb{C}^n$, $||.||$ denotes the $\mathbb{C}{\rm-norm}$
in the space of holomorphic maps from $U$ to $\mathbb{C}^n$ and $\left|Per_q(g,B)\right|$ denotes the number of periodic points of period $q$ of $g$, completely contained in $B$.
In \cite{Zh5}, it is proved that $q$ divides $P_q(f,0)$ and
the number $\mathcal{N}_q(f):=\frac{P_q(f,0)}{q}$ is called the number of periodic orbits of $f$ of period $q$
hidden at the origin. (the reader
is referred to the references \cite{FL}, \cite{Gr1}--\cite{L}, \cite{Po1}--\cite{Po2}, \cite{Zh5}--\cite{Zh6}
for some interesting topics related to local and global Dold indices and the number of periodic orbits hidden at the fixed point.)

Let $f\in\mathcal{O}(\mathbb{C}^n,0,0)$ such that \[f(\bold{x})=\Lambda\bold{x}+o(\bold{x})\]
in a neighborhood of the origin, where $\Lambda$ is the linearization matrix of $f$ at the origin and
$o(\bold{x})$ denotes the higher order terms. There exists a close relationship between the sequence
$\{\mathcal{N}_q(f)\}^\infty_{q=1}$ and the periods of nonzero periodic points of the linear map
$\bold{x}\mapsto\Lambda\bold{x}$. Let $PE(\Lambda)$ be the set consisting of all periods of
nonzero periodic points of the linear map $\bold{x}\mapsto\Lambda\bold{x}$.
Zhang proved the following theorem (see \cite{Zh3}).

\begin{theorem}
\label{T1.2}Let $q$ be a positive integer and $f\in\mathcal{O}(\mathbb{C}^n,0,0)$ with
the linearization matrix  $\Lambda$ at the origin. Assume that the origin is an isolated fixed point of
both $f$ and $f^q$. Then
\begin{itemize}
\item for $q\ge 2$, the number $\mathcal{N}_q(f)>0$ if and only if $q\in PE(\Lambda)$;
\item for $q=1$, the number $\mathcal{N}_1(f) >1$ if and only if $1\in PE(\Lambda)$.
\end{itemize}
\end{theorem}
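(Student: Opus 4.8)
The plan is to work throughout with the local Dold index $P_q(f,0)=q\,\mathcal N_q(f)$ and to read off $PE(\Lambda)$ from linear algebra. Record three facts. (i) For a holomorphic germ $h$ with an isolated fixed point at $0$, $\mu_h(0)=\pi_{h-id}(0)\ge 1$, with equality iff $0$ is a simple zero of $h-id$, i.e. iff $\det(Dh(0)-I)\neq 0$. (ii) By the perturbation characterization of $P_q$ recalled above, $P_q(f,0)\ge 0$, so for $q\ge 2$ the assertion $\mathcal N_q(f)>0$ is equivalent to $P_q(f,0)\neq 0$. (iii) Möbius inversion of the defining formula for $P_q$ gives $\mu_{f^m}(0)=\sum_{d\mid m}P_d(f,0)$, i.e. $P_q(f,0)=\sum_{d\mid q}\mu(q/d)\,\mu_{f^d}(0)$. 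The case $q=1$ is now immediate: $\mathcal N_1(f)=P_1(f,0)=\mu_f(0)=\pi_{f-id}(0)$, which exceeds $1$ exactly when $\Lambda-I$ is singular, i.e. when $1$ is an eigenvalue of $\Lambda$, i.e. --- since a nonzero vector of $\ker(\Lambda-I)$ is precisely a fixed, hence period-$1$, point of $\bold x\mapsto\Lambda\bold x$, and conversely --- exactly when $1\in PE(\Lambda)$. From now on $q\ge 2$.

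Put $\Lambda$ in Jordan form (harmless for $\mathcal N_q$ and $PE$). A nonzero periodic point of $\bold x\mapsto\Lambda\bold x$ lies in $\ker(\Lambda^m-I)$ for some $m$, and $\ker(\Lambda^m-I)$ is the sum of the eigenlines of the eigenvalues that are $m$-th roots of unity; writing $d_1,\dots,d_k$ for the orders of the root-of-unity eigenvalues of $\Lambda$, this gives $PE(\Lambda)=\{\operatorname{lcm}(d_i:i\in T):\emptyset\neq T\subseteq\{1,\dots,k\}\}$. I would then reduce to the case in which \emph{every} eigenvalue of $\Lambda$ is a root of unity: the generalized eigenspace $E^c$ of the root-of-unity eigenvalues carries an $f$-invariant (formal) center manifold $\widehat W$, and since on the complementary directions $\Lambda^d-I$ is invertible for every $d$, the fixed-point index localizes to $\widehat W$, so $\mu_{f^d}(0)=\mu_{(f|_{\widehat W})^d}(0)$ for all $d$, hence $P_q(f,0)=P_q(f|_{\widehat W},0)$ and $PE$ is unchanged. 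Finally, when $q\notin PE(\Lambda)$, setting $d^{\ast}:=\operatorname{lcm}\{\operatorname{ord}(\lambda):\lambda\in\operatorname{spec}\Lambda,\ \lambda^q=1\}$ (with $d^{\ast}=1$ if there is no such $\lambda$) one gets $d^{\ast}\mid q$, $d^{\ast}<q$, and --- because $\lambda^q=1$ and $\lambda^{d^{\ast}}=1$ coincide on eigenvalues --- $\ker(\Lambda^{d^{\ast}}-I)=\ker(\Lambda^{q}-I)=:K$ with $\Lambda^{d^{\ast}}|_K=\mathrm{id}_K$: the parabolic subspace of the power $\Lambda^q$ is already the parabolic subspace of the strictly smaller power $\Lambda^{d^{\ast}}$.

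For necessity I would show $P_q(f,0)=0$ whenever $q\ge 2$ and $q\notin PE(\Lambda)$, by induction on $q$. The two analytic inputs are: \emph{localization} --- if $Dh(0)$ is block upper triangular with diagonal blocks $A$ (only eigenvalue $1$) and $B$ (no eigenvalue $1$), then $\mu_h(0)$ equals the fixed-point index at $0$ of the restriction of $h$ to an invariant manifold tangent to the $A$-directions (a formal one suffices, as the index depends only on a finite jet); and \emph{tangent-to-identity invariance} --- a holomorphic germ $G$ with $DG(0)=\mathrm{id}$ and $0$ an isolated fixed point of all iterates has $\mu_{G^j}(0)=\mu_G(0)$ for every $j$, equivalently $P_j(G,0)=0$ for $j\ge 2$ (immediate for $n=1$, and for general $n$ part of the index-of-iterates analysis of Shub--Sullivan and Chow--Mallet--Paret--Yorke). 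Combining these: the iterate $f^{d^{\ast}}$ has parabolic subspace $K$ on which its linear part is $\mathrm{id}$, so its restriction $G$ to the corresponding invariant manifold is tangent to the identity, and $\mu_{f^{jd^{\ast}}}(0)=\mu_{G^j}(0)=\mu_G(0)$ for every $j$ with $jd^{\ast}\mid q$; plugging this, together with the inductive hypothesis for the divisors of $q$ not divisible by $d^{\ast}$ (handled by recursing to still smaller iterates, whose parabolic subspaces are smaller), into the Möbius sum $P_q(f,0)=\sum_{d\mid q}\mu(q/d)\mu_{f^d}(0)$ makes it collapse to $0$. The main obstacle is exactly this bookkeeping --- checking that the reductions for the various divisors of $q$ are mutually consistent and that the alternating sum really cancels --- together with the technical care needed to run localization along iterates via formal invariant manifolds.

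For sufficiency, let $q\in PE(\Lambda)$, $q=\operatorname{lcm}(d_1,\dots,d_r)$ with $\lambda_1,\dots,\lambda_r$ eigenvalues of orders $d_1,\dots,d_r$. Since $P_q(f,0)\ge 0$ and $q\mid P_q(f,0)$, it suffices to exhibit \emph{one} holomorphic $g$, arbitrarily close to $f$, having a hyperbolic periodic orbit of period exactly $q$ arbitrarily near $0$: such an orbit --- a $q$-cycle of simple zeros of $g^q-id$ --- persists under any further small perturbation of $g$, so $|Per_q(\cdot,B)|\ge q$ for maps near $f$ and thus $P_q(f,0)\ge q$. To build $g$ I would blow up the origin: periodic orbits of $g$ near $0$ correspond to periodic orbits near the exceptional divisor $\mathbb{CP}^{n-1}$ of the induced map, which on the divisor is $[\Lambda]$, and $[\Lambda]$ has a period-$q$ cycle through $[v_0]$ for $v_0$ a sum of eigenvectors of $\lambda_1,\dots,\lambda_r$. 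After a preliminary small perturbation one may assume this projective cycle is hyperbolic; using then the one-dimensional Leau--Fatou flower along each eigenline $\lambda_i$ (for $z\mapsto\lambda_i z+\cdots$ with $\lambda_i$ a primitive $d_i$-th root of unity one has $P_{d_i}>0$, a genuine $d_i$-cycle appearing under perturbation), one perturbs in the radial direction to pin the cycle at a definite radius, and combining over $i$ yields a $g$-orbit of period $\operatorname{lcm}(d_1,\dots,d_r)=q$. The real difficulty is to carry this out for an \emph{arbitrary} germ $f$ with linear part $\Lambda$ --- possibly non-semisimple, resonant, not a product --- keeping the projective cycle hyperbolic under the radial perturbation and certifying that the final orbit has period exactly $q$ and not a proper divisor. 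An alternative packaging: $f\mapsto P_q(f,0)$ is lower semicontinuous (a simple period-$q$ point of a nearby map survives small perturbations), so $\{f:\mathcal N_q(f)>0\}$ is closed; it then suffices to prove $\mathcal N_q(f)>0$ for $f$ in a dense set of germs with linear part $\Lambda$, where one may assume the center dynamics non-degenerate and the flower argument applies directly, and finish by density.
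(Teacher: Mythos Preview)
The paper does not prove Theorem~\ref{T1.2}: it is quoted as a result of Zhang \cite{Zh3}, and the remark immediately following it attributes the necessity direction (for $q\ge 2$, $\mathcal N_q(f)>0\Rightarrow q\in PE(\Lambda)$) to Shub--Sullivan \cite{SS} and Chow--Mallet-Paret--Yorke \cite{CMY}, with the converse being Zhang's contribution. So there is no in-paper proof to compare your proposal against; I can only assess it on its own.

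Your $q=1$ case is correct, and your necessity sketch is along the right lines but over-engineered. With $d^{*}$ as you define it, the Shub--Sullivan lemma (stated in this paper as Lemma~\ref{P8}) already gives $\mu_{f^d}(0)=\mu_{f^{\gcd(d,d^{*})}}(0)$ for every $d\mid q$ directly --- no formal center manifolds, no induction on $q$ are needed. Since $q\notin PE(\Lambda)$ forces $d^{*}\mid q$ with $d^{*}<q$, there is a prime $p\in P(q)$ with $p$-adic valuation $v_p(q)>v_p(d^{*})$; pairing $S\leftrightarrow S\cup\{p\}$ in the Dold sum $\sum_{S\subset P(q)}(-1)^{|S|}\mu_{f^{q:S}}(0)$ then cancels everything to zero, because $\gcd(q{:}S,d^*)=\gcd(q{:}(S\cup\{p\}),d^*)$.

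The genuine gap is sufficiency, which is the substantial direction and the content of Zhang's theorem. Your blow-up/Leau--Fatou sketch is a heuristic, not a proof: for an \emph{arbitrary} germ $f$ with linear part $\Lambda$ --- possibly non-semisimple and highly resonant --- you have not actually produced a perturbation carrying a periodic orbit of exact period $q$ near $0$, and you yourself flag this as ``the real difficulty''. Your alternative density argument also contains a slip: lower semicontinuity of $P_q$ would make $\{f:\mathcal N_q(f)>0\}$ \emph{open}, not closed (closedness requires upper semicontinuity). Even if one repairs this via the $\limsup$ characterization of $P_q$, one is still left with establishing $\mathcal N_q(f)>0$ on a dense set of germs, which is exactly the unproven hard step. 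In short, the sufficiency half is a program rather than a proof.
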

\begin{remark}
{\rm According to Shub--Sullivan \cite{SS} and Chow--Mallet-Paret--Yorke \cite{CMY},
a necessary condition so that there exists at least one periodic
orbit of period $q$ hidden at the origin, say, $\mathcal{N}_q(f)>0$
is that the linear part of $f$ at the origin has a
periodic point of period $q$. It is proved by Zhang in \cite{Zh3} that
the converse holds true.}
\end{remark}
In consideration of Theorem \ref{T1.2}, one may ask the following problem(see also \cite{Gor}).

\begin{problem} Given an $n\times n$ matrix $\Lambda$ and a non-negative integer sequence
$\{a_q\}^{\infty}_{q=1}$,  does there exist a holomorphic germ $f\in\mathcal{O}(\mathbb{C}^n,0,0)$
with the linearization matrix $\Lambda$ such that the origin is an isolated fixed point of
all iterates of $f$ and $\mathcal{N}_q(f)=a_q$ for $q=1,2,3,\cdots$?
\end{problem}

Given an $n\times n$ matrix $\Lambda$, the set $\mathcal{O}_{\Lambda}$
consists of $f\in\mathcal{O}(\mathbb{C}^n,0,0)$ such that the linearization matrix of $f$ is
$\Lambda$ and the origin is an isolated fixed point of all its iterates.
We say that a non-negative integer sequence $\{a_q\}^{\infty}_{q=1}$ is \emph{realizable}
for the matrix $\Lambda$, if there exists a germ $f\in\mathcal{O}_{\Lambda}$ such that
$\{\mathcal{N}_q(f)\}_{q=1}^{\infty}=\{a_q\}^{\infty}_{q=1}$ holds. It is easy to see that
a realizable sequence $\{a_q\}^{\infty}_{q=1}$ for $\Lambda$ must satisfy Zhang's conditions
(Theorem \ref{T1.2}). Thus we call a non-negative integer sequence $\{a_q\}^{\infty}_{q=1}$
\emph{admissible} for $\Lambda$, if it satisfies the conditions given by Theorem \ref{T1.2}.
That is, a non-negative integer sequence $\{a_q\}^{\infty}_{q=1}$ is admissible for $\Lambda$ if and only if
$$a_1\left\{\begin{matrix}\geq2\ ,&1\in PE(\Lambda)\\=1\ ,&1\not\in PE(\Lambda)\end{matrix}\right.\
{\rm and}\
a_q\left\{\begin{matrix}\geq1\ ,&q\in PE(\Lambda)\\=0\ ,&q\not\in PE(\Lambda)\end{matrix}\right.\
{\rm(}q\geq2\rm{)}.$$
The set of all admissible sequences $\{a_q\}^{\infty}_{q=1}$ associated to $\Lambda$ is denoted by
$AS(\Lambda)$, and if every sequence in $AS(\Lambda)$ is realizable, we will say that the matrix
$\Lambda$ is \emph{universal}.

The main purpose of this paper is to give a characteristic of the universal matrices.
We will prove the following theorem.

\begin{theorem}\label{T1}Let $\Lambda$ be an $n\times n$ matrix with
\begin{equation}
\label{F1.1}\Lambda=diag(A_{k_1},A_{k_2},...,A_{k_m}),
\end{equation}
where
\begin{equation*}
A_{k_j}=\begin{pmatrix}\lambda_j& 1& & & &\\
&\lambda_j &1 & & \\
& & \ddots&\ddots&\\
& &  &\lambda_j&1\\
& & & &\lambda_j
\end{pmatrix}_{k_j\times k_j},\ 1\le j\le m,\ k_1+k_2+\cdots+k_m=n
\end{equation*}
and $\lambda_j$ is the $d_j$-th primitive root of unity {\rm($j=1,2,\cdots,m$)}.
And let $M(\Lambda)$ denote the lowest common multiple of $d_1,d_2,\cdots,d_m$.
Then $\Lambda$ is universal if and only if one of the following two conditions holds:
\begin{enumerate}
\item By reordering the Jordan blocks of $\Lambda$ if it is necessary, the degrees satisfy
$1\le d_j\mid_{\neq}d_{j+1}$ ${\rm and}\ \lambda_j=\lambda^{\frac{d_{j+1}}{d_j}}_{j+1} {\rm(}j=1,2,\cdots,m-1{\rm)}$;
\item By reordering the Jordan blocks of $\Lambda$ if it is necessary, $\Lambda$ can be expressed as
$\Lambda=diag(\Lambda_1,\Lambda_2),$
where $\Lambda_1$ is of the form {\rm(\ref{F1.1})} and satisfies Condition (1), and
$\Lambda_2$ is a Jordan block with $M(\Lambda_2)\ge 2$ and $(M(\Lambda_1),M(\Lambda_2))=1$.
\end{enumerate}
Here, the notation $d_j\mid_{\neq}d_{j+1}$ means that $d_j$ divides $d_{j+1}$,
but $d_j\neq d_{j+1}$, and the notation
$(M(\Lambda_1),M(\Lambda_2))$ denotes the greatest common factor of $M(\Lambda_1)$ and $M(\Lambda_2)$.
\end{theorem}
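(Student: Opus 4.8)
The plan is to prove both implications after reducing them to a single statement about a distinguished sequence. First I would note that $PE(\Lambda)$ is precisely the set of least common multiples of nonempty sub-collections of $\{d_1,\dots,d_m\}$; consequently $AS(\Lambda)$ has a unique coordinatewise-smallest element, the sequence $\{a^{\min}_q\}$ with $a^{\min}_q=1$ for $q\in PE(\Lambda)\setminus\{1\}$, with $a^{\min}_1$ equal to $1$ or $2$ according as $1\notin PE(\Lambda)$ or $1\in PE(\Lambda)$, and $a^{\min}_q=0$ otherwise. Two observations then do the bookkeeping. First, \emph{monotone realizability}: if some admissible sequence is realizable for $\Lambda$, so is every larger admissible sequence; this follows from a local perturbation of a given $f\in\mathcal{O}_\Lambda$, concentrated near a period-$q_0$ orbit of $\mathbf{x}\mapsto\Lambda\mathbf{x}$, that creates one extra hidden orbit of period $q_0$ while leaving every other $\mu_{f^q}(0)$ — equivalently every other $P_q(f,0)$ — unchanged. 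Second, the constructions below carry free parameters, so realizing $\{a^{\min}_q\}$ upgrades automatically to realizing all of $AS(\Lambda)$. Hence the theorem is equivalent to: $\{a^{\min}_q\}$ is realizable for $\Lambda$ if and only if (1) or (2) holds.

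The computations rest on Dold's relation $\mu_{f^n}(0)=\sum_{d\mid n}P_d(f,0)$ and two consequences of it. From multiplicativity of the fixed-point index for product germs one obtains, for a split germ $f_1\times f_2$, the identity $P_d(f_1\times f_2,0)=\sum_{\operatorname{lcm}(a,b)=d}P_a(f_1,0)P_b(f_2,0)$, which when the periods contributed by the two factors are coprime collapses to a genuine product $P_{pq}(f_1\times f_2,0)=P_p(f_1,0)P_q(f_2,0)$, i.e. $\mathcal{N}_{pq}(f_1\times f_2)=\mathcal{N}_p(f_1)\,\mathcal{N}_q(f_2)$. For a triangular germ $f(\mathbf{x}_1,\mathbf{x}_2)=(g(\mathbf{x}_1),h(\mathbf{x}_1,\mathbf{x}_2))$ I would record a formula expressing $\mu_{f^n}(0)$ through the zero order of $g^n-id$ and that of the fiber return map at the fiber over the isolated fixed point of $g^n$. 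The base case fed into both is the one-variable germ $x\mapsto\lambda x(1+x^k)$: using the elementary identity $\prod_{j=0}^{d-1}(1+\lambda^j t)=1+(-1)^{d+1}t^d$ for a primitive $d$-th root of unity $\lambda$, one finds for its $d$-th iterate that $\mathcal{N}_d=k/\gcd(k,d)$ and $\mathcal{N}_q=0$ for $q\ne d$; the several-variable Jordan-block analogue of this model settles the case $m=1$ of condition (1) directly.

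For the ``if'' direction I would construct $f\in\mathcal{O}_\Lambda$ realizing $\{a^{\min}_q\}$, and — via the free exponents $k$ — realizing every admissible sequence. When $\Lambda$ satisfies (1) a decoupled product already overshoots, since along the chain $d_1\mid_{\neq}\cdots\mid_{\neq}d_m$ the product identity accumulates factors; so $f$ must be a genuine iterated skew product over the chain, and it is exactly the coherence relations $\lambda_j=\lambda_{j+1}^{d_{j+1}/d_j}$ that make the orbit product $\prod_\ell(1+\lambda_j^\ell\,x_\ell)$ telescope into a single power on the first $j$ blocks, pinning $\mu_{f^{d_j}}(0)$ down to its least possible value $\sum_{d\mid d_j}d\,a^{\min}_d$ while the non-resonant blocks $j+1,\dots,m$ contribute only a factor $1$. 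When $\Lambda$ satisfies (2), write $\Lambda=\operatorname{diag}(\Lambda_1,\Lambda_2)$ and take essentially $f=f_1\times f_2$ with $f_1\in\mathcal{O}_{\Lambda_1}$ the skew-product model and $f_2\in\mathcal{O}_{\Lambda_2}$ the Jordan-block model; the coprimality $(M(\Lambda_1),M(\Lambda_2))=1$ enters the product identity and makes $\mathcal{N}_q(f)$ minimal at every $q\in PE(\Lambda)$, the single period where a straight split overshoots — namely $M(\Lambda_2)$, and only when $\Lambda_1$ contains a block with eigenvalue $1$ — being repaired by promoting the split to a skew product of one factor over the other. With the reduction, this completes sufficiency.

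For the ``only if'' direction, assume $\Lambda$ satisfies neither (1) nor (2). For any $q$, eliminating the non-resonant Jordan blocks of $\Lambda^q$ by the implicit function theorem reduces $\mu_{f^q}(0)$ to the zero order of an induced germ on the resonant subspace, whose linear part is unipotent; on the span of the size-one resonant blocks this germ is tangent to the identity, so the corresponding part of $\mu_{f^q}(0)$ is the colength of an ideal generated by elements of the square of the maximal ideal. The crux is to exhibit $q_0\in PE(\Lambda)$ with $\mu_{f^{q_0}}(0)>\sum_{d\mid q_0}d\,a^{\min}_d$ for \emph{every} $f\in\mathcal{O}_\Lambda$: since $\mu_{f^{q_0}}(0)=\sum_{d\mid q_0}d\,\mathcal{N}_d(f)$ and $\mathcal{N}_d(f)\ge a^{\min}_d$, such a bound forces $\mathcal{N}_d(f)>a^{\min}_d$ for some $d\mid q_0$, so $f$ cannot realize $\{a^{\min}_q\}$ and $\Lambda$ is not universal. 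This bound is a multiplicity estimate — the mechanism that already forces $\mu_f(0)\ge4$ when $\Lambda$ is the $2\times2$ identity, sharpened to a Bézout/log-concavity inequality when several simultaneously resonant blocks occur — and the combinatorics of ``neither (1) nor (2)'' (two Jordan blocks with equal $d_j$; a divisibility-chain of the $d_j$ with an incoherent consecutive eigenvalue pair; three pairwise-incomparable blocks, or an off-chain block violating the coprimality of (2)) serves only to locate $q_0$ as a least common multiple at which two or three problematic resonances switch on simultaneously, so that the estimate is strict. This uniform lower bound is the hard part: the sufficiency constructions are routine once the skew-product calculus is in place, whereas here one must show that no coupling among the ``bad'' blocks can push $\mathcal{N}_{q_0}$ down to the minimal admissible value, separating the geometric obstruction (forced multiplicity of several simultaneously resonant Jordan blocks) from the arithmetic one (failure, in the incoherent case, of the very telescoping identity that powered the sufficiency side), and identifying the precise inequality obeyed by $(\mu_{f^q}(0))_{q\in PE(\Lambda)}$ over all of $\mathcal{O}_\Lambda$.
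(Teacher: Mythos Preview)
Your reduction of the theorem to the statement ``$\{a^{\min}_q\}$ is realizable for $\Lambda$ if and only if (1) or (2) holds'' rests on the \emph{monotone realizability} claim, and that claim is false. Take $\Lambda=\operatorname{diag}(e^{2\pi i/3},e^{4\pi i/3})$, so $m=2$, $d_1=d_2=3$; this satisfies neither (1) (the $d_j$ are equal) nor (2). Here $PE(\Lambda)=\{3\}$ and the minimal sequence has $a^{\min}_3=1$. The germ $f(x_1,x_2)=(\lambda_1 x_1+x_2^{2},\,\lambda_2 x_2+x_1^{2})$ lies in $\mathcal{R}_\Lambda$, has $\pi_{\tau f}(0)=4$, hence $\mu_{f^3}(0)=4$ and $\mathcal{N}_3(f)=1$: the minimal sequence \emph{is} realizable. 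Yet $\Lambda$ is not universal: one checks (and this is exactly the content of the paper's Case~II) that no $g\in\mathcal{O}_\Lambda$ has $\mathcal{N}_3(g)=2$. So ``minimal realizable'' does not imply ``universal'', and your proposed perturbation argument---adding one hidden orbit of a chosen period without disturbing any other $\mu_{f^q}(0)$---cannot work in general.

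This breaks your necessity argument at its root. You propose to show, for every $\Lambda$ outside (1)--(2), a uniform lower bound $\mu_{f^{q_0}}(0)>\sum_{d\mid q_0}d\,a^{\min}_d$; but in the example above no such $q_0$ exists, since the minimum is attained. The obstruction there is not a lower bound on a single $\mu_{f^{q_0}}(0)$ but an \emph{incompatibility among several values}: $\mathcal{N}_3=1$ and $\mathcal{N}_3=2$ are not both attainable. The paper's necessity proof is built around precisely such incompatibilities: after a monotonicity reduction to small sub-matrices (Theorem~\ref{T2}), Cases~II, IV and V each exhibit a forbidden \emph{pattern} of $\mathcal{N}_q$-values (e.g.\ in Case~IV, the hypotheses $\mathcal{N}_{d_1}\ge2$, $\mathcal{N}_{d_3}\ge2$, $\mathcal{N}_{d_2}=\mathcal{N}_{d_4}=\mathcal{N}_{d_1d_3}=1$, $\mathcal{N}_{d_2d_4}\ge2$ force $\mathcal{N}_{d_1d_4}=\mathcal{N}_{d_2d_3}=1$), rather than a single strict inequality. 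Your framework, which only tests realizability of the coordinatewise minimum, cannot detect these, so the necessity direction has a genuine gap.
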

\begin{remark}
{\rm Given a holomorphic germ $f\in\mathcal{O}_{\Lambda}$,
it is well known that biholomorphic transformations of coordinate do not change the fixed point indices of
all iterates of $f$. Furthermore, according to [\cite{Qiao}, Theorem 1.1 and Theorem 1.2] (or Theorem \ref{T3.1}
and Theorem \ref{T3} in Section \ref{Sec3}),
if the linearization matrix of $f$ is a Jordan matrix, then $f$ can be reduced to a holomorphic germ $g$ of
less than or equal dimension such that all eigenvalues of its linearization matrix are roots of unity and
$\mathcal{N}_q(g)$ is equal to $\mathcal{N}_q(f)$ ($q=1,2,3,\dots$). Therefore,
Theorem \ref{T1} is no loss of generality.}
\end{remark}
\begin{remark}
{\rm When an $n\times n$ matrix $\Lambda$ is diagonalizable, and all its eigenvalues are roots of unity of
pairwise relatively prime degrees greater than 1, I. Gorbovickis gave a sufficient and necessary condition
for the matrix $\Lambda$ to be universal, see \cite{Gor}. We restate the result of Gorbovickis as the following theorem.
\begin{theorem}[\protect{Gorbovickis}]
Let $\Lambda$ be of the form {\rm (\ref{F1.1})} such that $k_1=k_2=\cdots=k_m=1$ and $d_1,d_2,\cdots,d_m$
are greater than $1$ and pairwise relatively prime. Then $\Lambda$ is universal if and only if
$m\leq2$.\label{T1.3}
\end{theorem}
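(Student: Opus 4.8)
The statement to establish is Theorem \ref{T1.3}.

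The plan is to work throughout in Poincar\'e--Dulac normal form. Since $\Lambda$ is diagonal with spectrum consisting of roots of unity, the only resonances among the eigenvalues are $\lambda^{\boldsymbol\alpha}=\lambda_k$, and — using that the $d_j$ are pairwise coprime — the exponents solving this are exactly those with $\alpha_k\equiv 1\ (\mathrm{mod}\ d_k)$ and $d_j\mid\alpha_j$ for $j\neq k$; moreover each nonzero divisor $\lambda^{\boldsymbol\alpha}-\lambda_k$ is a difference of two $M$-th roots of unity ($M=d_1\cdots d_m$), hence lies in a fixed finite set bounded away from $0$, so there are no small divisors and the formal normalization converges. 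Thus, after a biholomorphic change of coordinates — which leaves every $\mathcal N_q(f)$ unchanged (cf. Theorem \ref{T3.1}, Theorem \ref{T3} and \cite{Qiao}) — one may assume $f_k(\mathbf x)=\lambda_k x_k\,u_k(x_1^{d_1},\dots,x_m^{d_m})$ with $u_k(0)=1$, $k=1,\dots,m$. Setting $y_j=x_j^{d_j}$ and $\phi(\mathbf x)=(y_1,\dots,y_m)$ one gets $\phi\circ f=F\circ\phi$ with $F_j(\mathbf y)=y_j\,u_j(\mathbf y)^{d_j}$: here $F$ is tangent to the identity, $\phi$ is a branched $M$-fold covering branched over $\bigcup_j\{y_j=0\}$, and $f$ permutes the fibres of $\phi$. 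Each coordinate subspace $V_S=\{x_j=0:j\notin S\}$ is $f$-invariant, and $f|_{V_S}$, $F|_{V_S}$, $\phi|_{V_S}$ are the analogous objects in $|S|$ variables.

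The second step is the dictionary between $\{\mathcal N_q(f)\}$ and the local geometry of $F$. Because $F$ is tangent to the identity, a generic holomorphic perturbation of it has, near $0$, only fixed points; pulling this back through $\phi$ shows that a periodic orbit of $f$ hidden at $0$ must lie over a fixed point $\mathbf y^\ast$ of $F$, that $\mathbf y^\ast$ lies in the open stratum of exactly one $V_S$, and that the orbit then has period $q_S:=\prod_{j\in S}d_j$. In particular $PE(\Lambda)=\{q_S:\varnothing\neq S\subseteq\{1,\dots,m\}\}$, $1\notin PE(\Lambda)$, and an admissible sequence amounts to a choice of integers $\mathcal N_{q_S}\ge1$, one for each nonempty $S$ (with $\mathcal N_1=1$ and $\mathcal N_q=0$ for all other $q$). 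Counting with multiplicity, $\mathcal N_{q_S}(f)$ is a local intersection number at $0$ attached to $F|_{V_S}$ — the intersection multiplicity of the $|S|$ hypersurface germs in $V_S$ cut out by the return defects of $F|_{V_S}$ — and, combining Dold's formula with the block form of $\Lambda^q$, one obtains the single book-keeping identity $\mu_{f^{q_R}}(0)=1+\sum_{\varnothing\neq S\subseteq R}q_S\,\mathcal N_{q_S}(f)$ for every $R$.

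For the ``if'' direction assume $m\le2$. If $m=1$ the assertion is classical one-variable dynamics: $f(z)=\lambda z+z^{ad+1}$ gives $f^d(z)=z+(\mathrm{const}\neq0)z^{ad+1}+\cdots$, so $\mathcal N_1(f)=1$, $\mathcal N_d(f)=a$, $\mathcal N_q(f)=0$ otherwise, realizing every admissible sequence. If $m=2$, an admissible sequence is a triple $(\mathcal N_{d_1},\mathcal N_{d_2},\mathcal N_{d_1d_2})=(\alpha,\beta,\gamma)$ with $\alpha,\beta,\gamma\ge1$; one realizes it by choosing $u_1,u_2$ explicitly — polynomials in $y_1,y_2$ — so that the two descended defect curves meet the $y_1$- and the $y_2$-axis, respectively, to the prescribed orders (which fix $\alpha$ and $\beta$) and meet each other at $0$ with intersection multiplicity $\gamma$ (for example by imposing tangential contact of the appropriate order), while keeping every relevant ideal $\mathfrak m$-primary so that $0$ remains an isolated fixed point of every iterate of $f$. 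A short intersection-number computation then finishes this case, so every such $\Lambda$ is universal.

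For the ``only if'' direction, i.e. $m\ge3$, the point — and the crux of the proof — is that this freedom disappears. The hypersurface germs whose intersections compute the numbers $\mathcal N_{q_S}(f)$ are not independent: they all descend from the \emph{single} germ $F$, and on every stratum simultaneously the $j$-th of them has the same tangent cone, up to a nonzero scalar, as the naive defect $u_j^{d_j}-1$. When $m\ge3$ this coupling, together with a B\'ezout / mixed-multiplicity estimate in at least three variables, forces a numerical relation among the $\mathcal N_{q_S}(f)$ — a lower bound for one of the top-level numbers $\mathcal N_{q_S}(f)$ with $|S|=m$ in terms of the numbers attached to proper sub-strata — that is not implied by admissibility. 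Choosing an admissible sequence that violates it (for instance one in which several two-fold numbers $\mathcal N_{d_id_j}$ are large while $\mathcal N_{d_1\cdots d_m}=1$) then shows $\Lambda$ is not universal; for $m=2$ there is no stratum above $\{1,2\}$, hence no such relation, which is exactly why $m\le2$ is the borderline. The main obstacle is to make this rigidity inequality sharp and correct: one must understand how the higher-order part of the descended defects is constrained by the dynamics of $F$ — whereas the naive defects are arbitrary — and convert that constraint into a genuine lower bound on an $m$-fold local intersection multiplicity in terms of the $2$-fold ones, valid for every $f\in\mathcal O_\Lambda$.
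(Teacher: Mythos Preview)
Your overall architecture --- reduce to resonant normal form, pass via $y_j=x_j^{d_j}$ to a tangent-to-identity germ $F$, and read off each $\mathcal N_{q_S}(f)$ as a local intersection number attached to the stratum $V_S$ --- is correct and is essentially the same framework the paper (following Gorbovickis) uses; see the map $\tilde\tau$ in \S\ref{S3} and formula~(\ref{FT}). The ``if'' direction is also fine in outline; the paper supplies the explicit realizing germs in Example~\ref{EX3.1} (and, for $m=1$, in the end of the proof of Proposition~\ref{Cr3.2}).

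The gap is entirely in the ``only if'' direction. You assert that for $m\ge3$ a ``B\'ezout / mixed-multiplicity estimate'' produces a lower bound for $\mathcal N_{d_1\cdots d_m}(f)$ in terms of the two-fold numbers $\mathcal N_{d_id_j}(f)$, and propose to violate it by taking the latter large and the former equal to~$1$. But no such inequality is stated, let alone proved, and your last sentence (``The main obstacle is to make this rigidity inequality sharp and correct'') concedes that the crux is missing. Worse, the \emph{direction} of the obstruction you suggest is not the one that actually works. In the paper's proof of the relevant case (inside Proposition~\ref{Cr3.2}, which specializes to Theorem~\ref{T1.3} when $k_1=\cdots=k_m=1$) the obstruction for $m=3$ is: if $\mathcal N_{d_1},\mathcal N_{d_2},\mathcal N_{d_3}>1$, $\mathcal N_{d_1d_2}=\mathcal N_{d_1d_3}=1$ and $\mathcal N_{d_2d_3}=2$, then necessarily $\mathcal N_{d_1d_2d_3}=1$. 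Thus the non-realizable admissible sequence has \emph{small} two-fold numbers and asks for $\mathcal N_{d_1d_2d_3}\ge2$; it is an \emph{upper} constraint on the top-level number under specific small values below, not a B\'ezout-type lower bound forced by large ones.

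The mechanism of the paper's argument is not an abstract intersection-theoretic inequality but a concrete coefficient chase. Writing $\tilde\tau f$ as in (\ref{F3.3})--(\ref{F3.4}), the lowest-order part of $g_{s_j}$ is $a_{j1}x_{s_0+1}^{d_1}+\cdots+a_{jm}x_{s_{m-1}+1}^{d_m}$; repeated use of Cronin's Lemma~\ref{P2.6} together with (\ref{FT}) converts each hypothesis on $\mathcal N_{q_S}$ into vanishing/nonvanishing statements about the $a_{jk}$ (for instance $\mathcal N_{d_j}>1\Rightarrow a_{jj}=0$, $\mathcal N_{d_1d_2}=1\Rightarrow a_{12}a_{21}\neq0$, etc.), and the accumulated constraints then force $\pi_{p_{w(d_1d_2d_3)}\circ g\circ i_{w(d_1d_2d_3)}}(0)=d_1d_2d_3$, i.e.\ $\mathcal N_{d_1d_2d_3}=1$. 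To complete your proof you would need either to carry out this coefficient analysis, or to produce and prove a genuine inequality of the type you describe --- and the latter, as stated, does not obviously hold (one can have $\mathcal N_{d_1d_2d_3}=1$ while some $\mathcal N_{d_id_j}$ are arbitrarily large, since the linear parts of $v_i,v_j$ can become proportional after restriction to $\{y_k=0\}$ without spoiling their joint transversality in $\mathbb C^3$).
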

\noindent It is observed that when $\Lambda$ satisfies Theorem \ref{T1.3}, Condition $(1)$ in Theorem \ref{T1} degenerates to the case of $m=1$ in Theorem \ref{T1.3} and Condition $(2)$ in Theorem \ref{T1} degenerates to the case of $m=2$
in Theorem \ref{T1.3}. And Theorem \ref{T1.3} indicates that if $\Lambda$ satisfies Theorem \ref{T1.3},
then that $\Lambda$ is universal strongly depends on the dimension $n$ of the phase space.}
\end{remark}
\begin{remark}
{\rm In \cite{Gor}, to prove Theorem \ref{T1.3}, Gorbovickis established a method for reducing the number of periodic orbits of a special class of holomorphic germs hidden at the origin to the zero indices of a class of corresponding holomorphic germs at the origin. Then by analyzing the zero indices of corresponding holomorphic germs at the origin, he got the proof of Theorem \ref{T1.3}. In this paper, one of the difficulties in proving Theorem \ref{T1} is how to count the number of periodic orbits of general holomorphic germs hidden at the origin. Fortunately, \cite{Qiao} gave us a method to reduce the number of periodic orbits of general holomorphic germs hidden at the origin to zero indices of their corresponding holomorphic germs at the origin. But even then, how to analyze zero indices of corresponding holomorphic germs is still very complicated, for the holomorphic germs being considered are extremely general. To overcome this, we establish some monotonicity of the number of periodic orbits of holomorphic germs hidden at the origin with respect to their linearization matrices, which can reduce some complicated holomorphic germs to relatively simple ones. Then based on the monotonicity and Qiao's method \cite{Qiao} and Cronin's result (\cite{Cro} or \cite{Zh2}) estimating zero indices of holomorphic germs, 
we discuss six special cases including an extension of Gorbovickis' case. Combining these six cases, we finally get the proof of Theorem \ref{T1}.}
\end{remark}

\section{Several properties of zero indices and fixed point indices\label{Sec2}}
In this section we give several properties of zero indices and fixed point indices, which will be used later.

\begin{lemma}
\label{P1}For $g$, $\tilde{g}\in \mathcal{O}(\mathbb{C}^n,0,0)$, let
\begin{equation*}
g(x_1,x_2,\cdots,x_n)=(g_1,g_2,\cdots,g_n)
\end{equation*}
and
\begin{equation*}
\tilde{g}(x_1,x_2,\cdots,x_n)=(g_1,\dots,g_{j-1},\tilde{g}_j,g_{j+1},\dots,g_n).
\end{equation*}
If the origin is their isolated zero with multiplicity N and M respectively,
then the map
\begin{equation*}
f:(x_1,x_2,\cdots,x_n)\mapsto(g_1,\dots,g_{j-1},\tilde{g}_jg_j,g_{j+1},\dots,g_n)
\end{equation*}
has an isolated zero of multiplicity N+M at the origin.
\end{lemma}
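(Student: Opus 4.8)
The plan is to reduce the statement to a one--variable fact about a local ring. Without loss of generality take $j=n$. Let $\mathcal{O}_n$ denote the ring of germs at the origin of holomorphic functions $(\mathbb{C}^n,0)\to\mathbb{C}$, and set $R:=\mathcal{O}_n/(g_1,\dots,g_{n-1})$, writing $g_n,\tilde g_n$ also for the images of these germs in $R$. Recall that for a germ $h$ with an isolated zero at the origin the zero order $\pi_h(0)$ equals the algebraic multiplicity $\dim_{\mathbb{C}}\mathcal{O}_n/(h_1,\dots,h_n)$ (this is classical; cf.\ \cite{LL}). Hence it suffices to prove that
\[
\dim_{\mathbb{C}} R/(\tilde g_n g_n)\;=\;\dim_{\mathbb{C}}R/(g_n)+\dim_{\mathbb{C}}R/(\tilde g_n),
\]
with both summands finite, and to deduce from this that the origin is an isolated zero of $f$ (equivalently, that $(g_1,\dots,g_{n-1},\tilde g_n g_n)$ is $\mathfrak m$-primary).

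First I would record the structure of $R$. Since the origin is an isolated zero of $g$, the ideal $(g_1,\dots,g_n)$ is $\mathfrak m$-primary, so $g_1,\dots,g_n$ is a system of parameters of the regular — hence Cohen--Macaulay — local ring $\mathcal{O}_n$, and therefore a regular sequence. In particular $g_1,\dots,g_{n-1}$ is a regular sequence, $R$ is a one-dimensional Cohen--Macaulay ring, the image of $g_n$ in $R$ is a non-zerodivisor, and $\dim_{\mathbb{C}}R/(g_n)=\dim_{\mathbb{C}}\mathcal{O}_n/(g_1,\dots,g_n)=N<\infty$. Applying the same argument to $\tilde g=(g_1,\dots,g_{n-1},\tilde g_n)$, whose origin is an isolated zero, shows that the image of $\tilde g_n$ in $R$ is also a non-zerodivisor and $\dim_{\mathbb{C}}R/(\tilde g_n)=M<\infty$.

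The key step is then the short exact sequence of $\mathbb{C}$-vector spaces
\[
0\longrightarrow R/(g_n)\xrightarrow{\ \cdot\,\tilde g_n\ } R/(\tilde g_n g_n)\longrightarrow R/(\tilde g_n)\longrightarrow 0,
\]
where the first map is multiplication by $\tilde g_n$ and the second is the natural quotient. Well-definedness of the first map and exactness at the middle and right terms are formal; the point at which the non-zerodivisor property of $\tilde g_n$ on $R$ enters is injectivity: if $\tilde g_n x\in(\tilde g_n g_n)$ in $R$, then $\tilde g_n(x-g_n y)=0$ in $R$ for some $y$, hence $x\in(g_n)$. Additivity of dimension along this sequence yields $\dim_{\mathbb{C}}R/(\tilde g_n g_n)=N+M<\infty$; finiteness shows that $(g_1,\dots,g_{n-1},\tilde g_n g_n)$ is $\mathfrak m$-primary, i.e.\ the origin is an isolated zero of $f$, and the value $N+M$ is exactly $\pi_f(0)$.

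The only genuine subtlety I anticipate is bookkeeping at the level of \emph{which} multiplicity: one must be sure the geometric definition of $\pi_h(0)$ used in the paper coincides with $\dim_{\mathbb{C}}\mathcal{O}_n/(h_1,\dots,h_n)$ so that the algebra above applies. If one prefers to stay entirely within the geometric picture, an alternative route avoids commutative algebra: fix a common generic small value $(v_1,\dots,v_{n-1})$, let $C=\{g_i=v_i:\,i<n\}$ be the germ of the resulting analytic curve in a small ball $B$, note that $g_n|_C$ and $\tilde g_n|_C$ are branched covers onto a neighbourhood of $0$ of degrees $N$ and $M$ respectively, and use that on each local branch of $C$ the vanishing order of a product is the sum of the vanishing orders to get $\deg\bigl((\tilde g_n g_n)|_C\bigr)=N+M$; hence $\pi_f(0)=\#\{x\in C:\ \tilde g_n(x)g_n(x)=v_n\}=N+M$ for generic small $v_n$. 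Either way the proof is short, the real work being to set up the exact sequence (respectively the curve $C$) correctly.
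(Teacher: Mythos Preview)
Your argument is correct. The paper itself does not give a proof of this lemma: it simply records in the subsequent remark that the statement ``is well known in the theory of zero indices'' and moves on. So there is no proof in the paper to compare against; your proposal supplies exactly the kind of standard justification the authors are implicitly invoking.

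Both routes you outline are valid. The algebraic one is the cleanest: the identification $\pi_h(0)=\dim_{\mathbb C}\mathcal O_n/(h_1,\dots,h_n)$ is indeed classical (it appears, for instance, in \cite{ARN}, which the paper already cites for Lemma~\ref{PR1} and Lemma~\ref{P5}), the regular-sequence/Cohen--Macaulay step is standard, and your short exact sequence is correct with the non-zerodivisor property of $\tilde g_n$ used precisely where you say. The geometric alternative via a generic curve $C$ is closer in spirit to the paper's definition of $\pi_h(0)$ and would also work, though one has to be slightly careful that the curve $C$ may be reducible near the origin; your phrasing ``on each local branch'' already accounts for this. Either way, nothing is missing.
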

\begin{remark}
Lemma \ref{P1} is well known in the theory of zero indices.
\end{remark}
\vspace{0.2cm}
Let $f, g\in \mathcal{O}(\mathbb{C}^n,0,0)$. Then $f$ and $g$ are \emph{algebraically equivalent} at the origin
if there exists a germ of a holomorphic family of linear nondegenerate maps $A(x)\in GL(n,\mathbb{C})$ such that
 $f(x)=A(x)g(x)$.
\begin{lemma}[\protect\cite{ARN}]
\label{PR1}Let $f, g\in \mathcal{O}(\mathbb{C}^n,0,0)$. If $f$ and $g$ are algebraically equivalent at the origin,
 then the origin is an isolated zero of $f$ if and only if it is an isolated zero of $g$.
 In this case, $\pi_f(0)=\pi_g(0)$.
\end{lemma}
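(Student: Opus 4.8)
\emph{Proof proposal.} The plan is to reduce everything to the standard algebraic description of the zero order. First I would dispose of the ``isolated zero'' equivalence directly: fix a ball $B$ centered at the origin on which $g$ and the holomorphic family $A(x)$ are defined and on which $\det A(x)\neq 0$. Since $A(x)w=0$ forces $w=0$, we have $f^{-1}(0)\cap B=g^{-1}(0)\cap B$; hence the origin is an isolated zero of $f=Ag$ if and only if it is an isolated zero of $g$.

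For the equality of zero orders I would invoke the classical identity
\[
\pi_h(0)=\dim_{\mathbb{C}}\mathcal{O}_n/(h_1,\dots,h_n),
\]
valid for every $h\in\mathcal{O}(\mathbb{C}^n,0,0)$ with the origin an isolated zero, where $\mathcal{O}_n$ denotes the local ring of holomorphic function germs at $0\in\mathbb{C}^n$ (this is essentially the content behind the well-definedness of $\pi_h(0)$ cited from \cite{LL}). Because $A(x)$ is holomorphic and invertible near the origin, its inverse $A(x)^{-1}=(\det A(x))^{-1}\,\mathrm{adj}\,A(x)$ is again holomorphic near the origin. Writing $f_i=\sum_j a_{ij}(x)g_j$ and $g_i=\sum_j (A^{-1})_{ij}(x)f_j$ shows that the ideals $(f_1,\dots,f_n)$ and $(g_1,\dots,g_n)$ of $\mathcal{O}_n$ coincide. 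Therefore the two local algebras agree, one is finite-dimensional iff the other is (re-proving the first assertion), and $\pi_f(0)=\pi_g(0)$.

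If one prefers to avoid the local-algebra input, I would instead argue by homotopy of the linking/degree: on a small sphere $\partial B$ neither $f$ nor $g$ vanishes, and $A$ can be deformed to the identity through maps into $GL(n,\mathbb{C})$, first via $A_s(x)=A(sx)$, $s\in[0,1]$, which connects $A$ to the constant $A(0)$, then along a path in the connected group $GL(n,\mathbb{C})$ joining $A(0)$ to $I$. Along the resulting homotopy the map $x\mapsto A_t(x)g(x)$ never vanishes on $\partial B$, so the degree of $(A_tg)/|A_tg|:\partial B\to S^{2n-1}$ is independent of $t$; since this degree computes $\pi_{(\cdot)}(0)$, we again obtain $\pi_f(0)=\pi_g(0)$.

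The one point genuinely requiring care — and presumably the reason the statement is attributed to \cite{ARN} rather than proved in place — is the passage to the algebraic (or degree-theoretic) description of $\pi_h(0)$; granting that, the argument collapses to a one-line manipulation of ideals (or of a nonvanishing homotopy). Accordingly I would keep the proof short, cite that description, and present only the ideal identity $(f)=( g)$ in $\mathcal{O}_n$.
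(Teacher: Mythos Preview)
Your argument is correct on both fronts: the set-theoretic equality $f^{-1}(0)=g^{-1}(0)$ near the origin is immediate from the invertibility of $A(x)$, and the equality of ideals $(f_1,\dots,f_n)=(g_1,\dots,g_n)$ in $\mathcal{O}_n$ (or, alternatively, the homotopy through $GL(n,\mathbb{C})$) yields $\pi_f(0)=\pi_g(0)$ once one grants the algebraic or degree-theoretic description of the zero order. Both routes are standard and sound.

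There is nothing to compare against: the paper does not prove this lemma at all but simply quotes it from \cite{ARN}. Your instinct in the final paragraph is exactly right --- the only substantive input is the identification of $\pi_h(0)$ with $\dim_{\mathbb{C}}\mathcal{O}_n/(h_1,\dots,h_n)$ (or with a topological degree), and that is precisely what the citation to Arnold--Gusein-Zade--Varchenko is carrying. In a write-up you could safely collapse the whole thing to the ideal identity and a reference, as you suggest.
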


\begin{lemma}[\protect\cite{Zh2}]
\label{P4}Let $f, g\in \mathcal{O}(\mathbb{C}^n,0,0)$. If the origin is an isolated zero of both $g$ and $f$ with
 multiplicity N and M respectively, then the composition $f\comp g$ of f and g has an isolated zero of
 multiplicity NM at the origin.
\end{lemma}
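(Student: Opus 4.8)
\emph{Proof proposal.} The plan is to read the multiplicity $\pi_{f\circ g}(0)$ straight off its definition, by counting the preimages of a well-chosen small regular value, and to organize that count by passing each preimage $x$ of $f\circ g$ through the intermediate point $g(x)$. First I would fix the geometry. Since the origin is an isolated zero of $f$, choose a ball $B_f$ on a neighbourhood of whose closure $f$ is defined, with $f^{-1}(0)\cap\overline{B_f}=\{0\}$; let $\varepsilon_f>0$ be a threshold such that $\#\big(f^{-1}(v)\cap B_f\big)=\pi_f(0)=M$ for every regular value $v$ of $f$ with $|v|<\varepsilon_f$ (such a threshold exists by the well-definedness of the zero order recalled in the Introduction). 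Since the origin is an isolated zero of $g$, fix similarly a ball $B_g$ with $g^{-1}(0)\cap\overline{B_g}=\{0\}$, a threshold $\varepsilon_g>0$ governing $\pi_g(0)=N$, and—shrinking $B_g$ if necessary—also $g(\overline{B_g})\subset B_f$. For $x\in\overline{B_g}$ one has $(f\circ g)(x)=0\iff f(g(x))=0\iff g(x)=0\iff x=0$, the middle equivalence because $g(x)\in B_f$; hence the origin is an isolated zero of $f\circ g$ and $\pi_{f\circ g}(0)$ may be computed inside $B_g$. Finally, since $f^{-1}(v)\cap B_f$ shrinks to $\{0\}$ as $v\to0$, there is $\delta>0$ so that $|v|<\delta$ forces every point of $f^{-1}(v)\cap B_f$ to have modulus $<\varepsilon_g$.

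The crux is a genericity choice: I claim there is $v\in\mathbb{C}^n$ of arbitrarily small modulus such that (i) $v$ is a regular value of $f$ and (ii) every point of $f^{-1}(v)\cap B_f$ is a regular value of $g$. Since the origin is an isolated zero, the germs $f$ and $g$ are finite at $0$; in particular $\det Dg\not\equiv0$, so the critical locus $C_g=\{x\in B_g:\det Dg(x)=0\}$ is a proper analytic subset of $B_g$, and by Remmert's proper mapping theorem $g(C_g)$ is, near $0$, an analytic set of dimension $<n$, whence so is $f(g(C_g))$. A value $v$ violates (ii) precisely when some point of $f^{-1}(v)\cap B_f$ lies in $g(C_g)$, i.e.\ when $v\in f(g(C_g))$; and $v$ violates (i) precisely when $v$ is a critical value of $f$, again an analytic set of dimension $<n$ near $0$. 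So the admissible $v$ form the complement of a proper analytic subset near $0$, a dense set, and in particular one with $|v|<\min(\varepsilon_f,\delta)$ exists. Fix such a $v$. By the choices of $\varepsilon_f$ and $\delta$, $f^{-1}(v)\cap B_f=\{y_1,\dots,y_M\}$ with $|y_i|<\varepsilon_g$, and each $y_i$ is a regular value of $g$, so $\#\big(g^{-1}(y_i)\cap B_g\big)=N$. Since $g(\overline{B_g})\subset B_f$,
\[(f\circ g)^{-1}(v)\cap B_g=\bigsqcup_{i=1}^{M}\big(g^{-1}(y_i)\cap B_g\big),\qquad\text{hence}\qquad\#\big((f\circ g)^{-1}(v)\cap B_g\big)=MN.\]
Moreover $v$ is itself a regular value of $f\circ g$: if $g(x)=y_i$, the chain rule gives $\det D(f\circ g)(x)=\det Df(y_i)\cdot\det Dg(x)\neq0$. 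As $\pi_{f\circ g}(0)$ is well defined and independent of the ball, $\pi_{f\circ g}(0)=\#\big((f\circ g)^{-1}(v)\cap B_g\big)=MN=NM$, which is the assertion.

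The step I expect to be the main obstacle is exactly this genericity juggling: producing one small $v$ that is at once a regular value of $f$ and has its whole $f$-fibre disjoint from the critical locus of $g$. Everything else is bookkeeping with nested balls, together with the well-definedness and ball-independence of the zero order; but the genericity step genuinely needs finiteness of the germs $f$ and $g$ at $0$ (to make $C_g$ and the critical-value sets proper analytic subsets) and the proper mapping theorem to propagate the dimension bound through $g$ and then $f$.

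An alternative, purely algebraic route sidesteps all of this. Identify $\pi_h(0)$ with the colength $\dim_{\mathbb{C}}\mathcal{O}_n/(h_1,\dots,h_n)$ of the ideal generated by the components of $h$ in the local ring $\mathcal{O}_n$ of germs of holomorphic functions at $0$. Then $g^{*}\colon\varphi\mapsto\varphi\circ g$ makes the source copy of $\mathcal{O}_n$ a module-finite, hence—by \emph{miracle flatness}, both rings being regular of dimension $n$ and the fibre $\mathcal{O}_n/(g_1,\dots,g_n)$ being $0$-dimensional—free, module of rank $\pi_g(0)=N$ over the target copy. Since $(f_1\circ g,\dots,f_n\circ g)=g^{*}\big((f_1,\dots,f_n)\big)\mathcal{O}_n$, reducing modulo this ideal gives $\mathcal{O}_n/(f_1\circ g,\dots,f_n\circ g)\cong\big(\mathcal{O}_n/(f_1,\dots,f_n)\big)^{N}$ as $\mathbb{C}$-vector spaces, so its dimension is $N\cdot M$, i.e.\ $\pi_{f\circ g}(0)=NM$. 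On that route the obstacle migrates to justifying the colength interpretation of the zero order and the freeness of $g^{*}$.
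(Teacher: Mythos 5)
The paper does not prove this lemma at all: it is imported verbatim from \cite{Zh2} and used as a black box, so there is no in-paper argument to compare against. Your first, geometric proof is the standard one for multiplicativity of the local degree under composition, and it is correct: the nested balls with $g(\overline{B_g})\subset B_f$ show that $0$ is an isolated zero of $f\circ g$, the compactness argument produces the threshold $\delta$, and for your generic $v$ the fibre of $f\circ g$ over $v$ in $B_g$ decomposes as the disjoint union of the $M$ fibres $g^{-1}(y_i)\cap B_g$, each of cardinality $N$, with $v$ a regular value of $f\circ g$ by the chain rule. The only step needing a touch more care is the genericity claim: the bad set is more precisely $f\bigl(B_f\cap g(C_g)\bigr)$ together with the critical values of $f$, and to see that this is a proper analytic subset near $0$ you need $g|_{C_g}$ and then $f$ to be proper on the relevant sets; this does hold after shrinking the balls, because a holomorphic germ with an isolated zero at $0$ is a finite, hence proper, map on small enough neighborhoods, and the same finiteness gives $\det Dg\not\equiv 0$. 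With that caveat supplied, the complement of the bad set is dense and your $v$ exists. Your algebraic alternative via the colength $\dim_{\mathbb{C}}\mathcal{O}_n/(h_1,\dots,h_n)$ and the freeness of $g^{*}$ (miracle flatness) is also correct and shorter, at the price of first justifying the equivalence of the colength with the paper's preimage-counting definition of $\pi_h(0)$, which is itself a classical theorem (see \cite{ARN}). Either route is a complete, self-contained replacement for the citation.
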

\begin{lemma}[\protect\cite{ARN}]
\label{P5}Let $f=(f_1,f_2,\cdots,f_n)\in\mathcal{O}(\mathbb{C}^n,0,0)$ be given by
\begin{equation*}
f_j(x_1,x_2,\cdots,x_n)=\sum_{k=1}^\infty f_{jk}(x_1,x_2,\cdots,x_n), j=1,\cdots,n
\end{equation*}
and $g=(g_1,g_2,\cdots,g_n)\in\mathcal{O}(\mathbb{C}^n,0,0)$ given by
\begin{equation*}
g_j(x_1,x_2,\cdots,x_n)=\sum_{k=1}^\infty g_{jk}(x_1,x_2,\cdots,x_n), j=1,\cdots,n,
\end{equation*}
where both $f_{jk}(x_1,x_2,\cdots,x_n)$ and $g_{jk}(x_1,x_2,\cdots,x_n)$ are homogeneous polynomials of degree $k$
in $x_1,$ $x_2,\cdots,x_n$. If the origin is an isolated zero of multiplicity $N>0$ of $f$ and $g_{jk}=f_{jk}$ for
$j=1,2,\cdots,n$ and $k=1,2,\cdots,N$, then the origin is also an isolated zero of multiplicity $N$ of $g$.
\end{lemma}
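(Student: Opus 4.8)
The plan is to reduce the statement to an algebraic fact about ideals in the local ring $\mathcal{O}_n:=\mathcal{O}(\mathbb{C}^n,0)$ of convergent power series. I would use the classical identification (see \cite{ARN}) that for $h\in\mathcal{O}(\mathbb{C}^n,0,0)$ the origin is an isolated zero of $h$ if and only if $\dim_{\mathbb{C}}\mathcal{O}_n/(h_1,\dots,h_n)<\infty$, and in that case
\[
\pi_h(0)=\dim_{\mathbb{C}}\mathcal{O}_n/(h_1,\dots,h_n).
\]
With this in hand, it suffices to prove that the hypotheses force the equality of ideals $(f_1,\dots,f_n)=(g_1,\dots,g_n)$ in $\mathcal{O}_n$: then $\mathcal{O}_n/(g_1,\dots,g_n)\cong\mathcal{O}_n/(f_1,\dots,f_n)$ has $\mathbb{C}$-dimension $N$, so the origin is an isolated zero of $g$ of multiplicity $N$.

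The first step is to extract from $\dim_{\mathbb{C}}\mathcal{O}_n/(f_1,\dots,f_n)=N$ the inclusion $\mathfrak{m}^N\subseteq(f_1,\dots,f_n)$, where $\mathfrak{m}\subset\mathcal{O}_n$ is the maximal ideal. Indeed $R:=\mathcal{O}_n/(f_1,\dots,f_n)$ is an Artinian local $\mathbb{C}$-algebra with residue field $\mathbb{C}$ and $\dim_{\mathbb{C}}R=N$; its maximal ideal $\bar{\mathfrak{m}}$ is nilpotent, and in the descending chain $R\supseteq\bar{\mathfrak{m}}\supseteq\bar{\mathfrak{m}}^{2}\supseteq\cdots$ each strict inclusion drops the $\mathbb{C}$-dimension by at least one (and a non-strict one forces the tail to vanish by Nakayama), so $\bar{\mathfrak{m}}^{N}=0$, i.e. $\mathfrak{m}^{N}\subseteq(f_1,\dots,f_n)$. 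Next observe that the hypothesis $g_{jk}=f_{jk}$ for $k=1,\dots,N$ says precisely that $g_j-f_j\in\mathfrak{m}^{N+1}$ for every $j$. Hence $g_j=f_j+(g_j-f_j)\in(f_1,\dots,f_n)+\mathfrak{m}^{N+1}\subseteq(f_1,\dots,f_n)$, which gives $(g_1,\dots,g_n)\subseteq(f_1,\dots,f_n)$.

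For the reverse inclusion I would invoke Nakayama's lemma again. From $f_j=g_j-(g_j-f_j)$ with $g_j-f_j\in\mathfrak{m}^{N+1}$ we get $(f_1,\dots,f_n)\subseteq(g_1,\dots,g_n)+\mathfrak{m}^{N+1}$, and combining this with $\mathfrak{m}^{N}\subseteq(f_1,\dots,f_n)$ yields
\[
\mathfrak{m}^{N}\subseteq(g_1,\dots,g_n)+\mathfrak{m}\cdot\mathfrak{m}^{N}.
\]
A short computation with this display shows that the finitely generated $\mathcal{O}_n$-module $\mathfrak{m}^{N}/\bigl(\mathfrak{m}^{N}\cap(g_1,\dots,g_n)\bigr)$ equals its own product with $\mathfrak{m}$, so by Nakayama's lemma it vanishes, i.e. $\mathfrak{m}^{N}\subseteq(g_1,\dots,g_n)$. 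Then $f_j=g_j+(f_j-g_j)$ with $g_j\in(g_1,\dots,g_n)$ and $f_j-g_j\in\mathfrak{m}^{N+1}\subseteq\mathfrak{m}^{N}\subseteq(g_1,\dots,g_n)$, so $(f_1,\dots,f_n)\subseteq(g_1,\dots,g_n)$. The two ideals therefore coincide and the lemma follows.

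The only genuinely nontrivial ingredient is the algebraic characterization of $\pi_h(0)$ recalled at the outset, which is classical; everything else is elementary commutative algebra, with no case distinction. An essentially equivalent, more analytic argument is also available and perhaps fits the paper's degree-theoretic setting better: writing $\mathfrak{m}^N\subseteq(f_1,\dots,f_n)$ out as $x_i^N=\sum_j a_{ij}(x)f_j(x)$ with the $a_{ij}$ holomorphic near $0$ gives a {\L}ojasiewicz-type bound $|f(x)|\ge c|x|^N$ on a punctured neighbourhood of the origin; since $|g(x)-f(x)|=O(|x|^{N+1})$, the straight-line homotopy $x\mapsto f(x)+t\bigl(g(x)-f(x)\bigr)$, $t\in[0,1]$, is zero-free on a small punctured ball, so the origin is an isolated zero of $g$, and homotopy invariance of the local Brouwer degree (which coincides with $\pi$ for holomorphic germs) gives $\pi_g(0)=\pi_f(0)=N$.
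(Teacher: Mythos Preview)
Your argument is correct. The identification $\pi_h(0)=\dim_{\mathbb{C}}\mathcal{O}_n/(h_1,\dots,h_n)$ is indeed the key external input, and from it the deduction $\mathfrak{m}^N\subseteq(f_1,\dots,f_n)$ via the length of the chain $R\supsetneq\bar{\mathfrak{m}}\supsetneq\cdots$ is clean. The Nakayama step for the reverse inclusion is also fine: your displayed inclusion $\mathfrak{m}^{N}\subseteq(g_1,\dots,g_n)+\mathfrak{m}\cdot\mathfrak{m}^{N}$ does give $M=\mathfrak{m}M$ for $M=\mathfrak{m}^N/(\mathfrak{m}^N\cap(g_1,\dots,g_n))$, which is finitely generated since $\mathcal{O}_n$ is Noetherian, so $M=0$. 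The analytic variant you sketch at the end (Łojasiewicz bound plus homotopy invariance of the local degree) is likewise sound and is closer in spirit to how such statements are usually proved in the singularity-theory literature you cite.

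There is nothing to compare on the paper's side: the lemma is quoted from \cite{ARN} and the paper supplies no proof of its own. Your write-up therefore goes well beyond what the paper does, and either of the two arguments you give would serve as a self-contained justification.
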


\begin{lemma}[Cronin, \protect\cite{Cro} or \cite{Zh2}]
\label{P2.6}Let $f=(f_{1},\dots ,f_{n})\in \mathcal{O}(\mathbb{C}^{n},0,0)$\ be given by
\begin{equation*}
f_{j}(x_{1},\dots ,x_{n})=\sum_{k=m_{j}}^{\infty }f_{jk}(x_{1},\cdots
,x_{n}),j=1,\cdots ,n,
\end{equation*}%
where each $m_j$ is a positive integer and each $f_{jk}$\ is a homogeneous polynomial of degree $k$\ in $%
x_{1},\dots ,x_{n}$. If $0$ is an isolated solution of the system of the $n$
equations
\begin{equation}
f_{jm_{j}}(x_{1},\dots ,x_{n})=0,j=1,\dots ,n,  \label{cr}
\end{equation}%
then $0$\ is an isolated zero of $f$ with
\begin{equation*}
\pi _{f}(0)=m_{1}\dots m_{n}.
\end{equation*}%
If $0$ is an isolated zero of $f$, but is not an isolated solution of the
system (\ref{cr}), then%
\begin{equation*}
\pi _{f}(0)>m_{1}\dots m_{n}.
\end{equation*}
\end{lemma}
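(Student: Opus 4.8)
Write $F=(f_{1m_1},\dots,f_{nm_n})$ for the map formed by the leading forms, so $F_j$ is homogeneous of degree $m_j$. Since $F(x_0)=0$ forces $F(\lambda x_0)=0$ for every $\lambda$, the hypothesis ``$0$ is an isolated solution of $f_{jm_j}=0$'' is the same as $F^{-1}(0)=\{0\}$ on all of $\mathbb C^n$; under it $F$ is a proper, hence finite, polynomial map. I would prove the two assertions in turn, both by degree theory.

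\emph{The equality.} Assume $F^{-1}(0)=\{0\}$. First, $\pi_F(0)=m_1\cdots m_n$: homogenizing $F_j(x)-v_jx_0^{m_j}$ to $\mathbb P^n$ and noting that $F^{-1}(0)=\{0\}$ leaves no intersection point on the hyperplane at infinity, B\'ezout's theorem produces exactly $m_1\cdots m_n$ affine solutions of $F(x)=v$ for generic small $v$, and by homogeneity they all lie in any prescribed small ball $B_r$ once $|v|$ is small; hence $\pi_F(0)=\deg(F,B_r,0)=\#F^{-1}(v)=m_1\cdots m_n$ (equivalently, one reads $\dim_{\mathbb C}\mathbb C[x]/(F_1,\dots,F_n)$ off the Hilbert series $\prod_j(1+t+\dots+t^{m_j-1})$ at $t=1$). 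Second, I pass from $F$ to $f$ by the rescaling homotopy
\begin{equation*}
H(x,t)=\big(t^{-m_1}f_1(tx),\ \dots,\ t^{-m_n}f_n(tx)\big)\ \ (0<t\le1),\qquad H(x,0):=F(x),
\end{equation*}
which, using $t^{-m_j}f_j(tx)=\sum_{k\ge m_j}t^{k-m_j}f_{jk}(x)$, extends continuously to $\overline{B_r}\times[0,1]$ with $H(\cdot,1)=f$. Because $H(x,t)\to F(x)\ne0$ uniformly on the sphere $|x|=r$ as $t\to0$, there is $t_0>0$ with $H(\cdot,t)\ne0$ on $|x|=r$ for $t\in[0,t_0]$; this simultaneously forces $f^{-1}(0)\cap B_{t_0r}=\{0\}$ (so $0$ is indeed an isolated zero of $f$) and, by homotopy invariance of the degree together with the identity $H(\cdot,t_0)=\mathrm{diag}(t_0^{-m_j})\circ f\circ(t_0\,\cdot)$ and the biholomorphic invariance of the zero index, gives $\pi_f(0)=\pi_{H(\cdot,t_0)}(0)=\deg(H(\cdot,t_0),B_r,0)=\deg(F,B_r,0)=m_1\cdots m_n$.

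\emph{The strict inequality.} Assume now $0$ is an isolated zero of $f$ but $F^{-1}(0)$ is positive-dimensional, so it contains a line $\mathbb C x_0$ with $x_0\ne0$ and $F_j(x_0)=0$ for all $j$. If every $m_j=1$ then $F=Df(0)$ is a singular linear map, and an isolated zero of a holomorphic map with singular derivative has $\pi\ge2>1=m_1\cdots m_n$; so assume some $m_j\ge2$. Fix a generic homogeneous map $G$ of multidegree $(m_1,\dots,m_n)$ with $G^{-1}(0)=\{0\}$; since $G$ lies off the discriminant hypersurface of homogeneous maps of this multidegree whose zero set is positive-dimensional, the pencil $F+\epsilon G$ meets that hypersurface in only finitely many $\epsilon$, so for generic small $\epsilon\ne0$ the map $f_\epsilon:=f+\epsilon G$ has leading forms $F+\epsilon G$ with isolated zero at $0$, whence $\pi_{f_\epsilon}(0)=m_1\cdots m_n$ by the first part. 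On a fixed $B_r$ with $f^{-1}(0)\cap\overline{B_r}=\{0\}$ one has $f_\epsilon\to f$ uniformly and $f\ne0$ on $\partial B_r$, so the degree is conserved and
\begin{equation*}
\pi_f(0)=\deg(f,B_r,0)=\deg(f_\epsilon,B_r,0)=m_1\cdots m_n+\sum_{z\in f_\epsilon^{-1}(0)\cap(B_r\setminus\{0\})}\mathrm{ord}_z(f_\epsilon);
\end{equation*}
in particular $\pi_f(0)\ge m_1\cdots m_n$, and equality would force $f_\epsilon$ to have $0$ as its only zero in $B_r$ for all small $\epsilon$. The remaining step — which is the main obstacle — is to rule this out: along the line $\mathbb C x_0\subset F^{-1}(0)$ the leading forms vanish identically, so $f_j(sx_0)=O(s^{m_j+1})$ while $G_j(sx_0)=s^{m_j}G_j(x_0)\ne0$, and one must show by a Newton-polygon / implicit-function analysis near this line (solving the $n-1$ parallelism equations $f(x)\parallel G(x)$, with Euler's identities $DF(x_0)x_0=0$, $D^2F(x_0)(x_0,x_0)=0$ controlling the degenerate branch) that for arbitrarily small $\epsilon$ there is $z_\epsilon\ne0$ with $f(z_\epsilon)=-\epsilon G(z_\epsilon)$, i.e. a zero of $f_\epsilon$ in $B_r\setminus\{0\}$. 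This extra zero contributes positively to the sum above and yields $\pi_f(0)>m_1\cdots m_n$. Thus the equality is routine (B\'ezout for the leading forms plus the degeneration of $f$ onto them), whereas the strict inequality rests entirely on producing, via a generic homogeneous perturbation, a zero off the origin near the positive-dimensional part of $F^{-1}(0)$.
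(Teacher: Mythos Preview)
The paper does not prove this lemma; it is quoted from Cronin \cite{Cro} (see also \cite{Zh2}) and used as a black box throughout Sections~3--5. So there is no ``paper's own proof'' to compare against, and your attempt has to be judged on its own merits.

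Your argument for the equality is correct and is essentially the standard one: B\'ezout (or the Hilbert series of a homogeneous complete intersection) gives $\pi_F(0)=m_1\cdots m_n$, and the weighted rescaling homotopy $H(x,t)=\bigl(t^{-m_j}f_j(tx)\bigr)_j$ deforms $f$ to its initial-form map $F$ without letting zeros cross $\partial B_r$. The observation that nonvanishing of $H(\cdot,t)$ on the sphere $|x|=r$ for all $t\in(0,t_0]$ forces $f^{-1}(0)\cap B_{t_0r}=\{0\}$ is exactly what simultaneously proves isolation of $0$ for $f$ and computes the index.

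For the strict inequality, however, there is a genuine gap, and you flag it yourself. You correctly reduce the problem to showing that the perturbation $f_\epsilon=f+\epsilon G$ acquires a zero in $B_r\setminus\{0\}$ for some small $\epsilon$, and you correctly locate where such a zero should appear (near the ray $\mathbb C x_0\subset F^{-1}(0)$, exploiting $f_j(sx_0)=O(s^{m_j+1})$ versus $G_j(sx_0)\sim s^{m_j}$). But the sentence ``one must show by a Newton-polygon / implicit-function analysis \ldots'' is not a proof: you have $n$ equations $f(x)=-\epsilon G(x)$ and only the two parameters $(s,\epsilon)$ along the ray, so an honest argument must work in a full neighbourhood of the ray and control the $n-1$ transverse directions. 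The Euler identities you cite show $DF(x_0)x_0=0$, which is necessary but nowhere near sufficient to run an implicit-function argument --- at a point of $F^{-1}(0)$ the Jacobian of $F$ may be arbitrarily degenerate.

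Two ways to close the gap. The cleaner one avoids the hunt for extra zeros altogether: since $0$ is isolated for $f$, the local algebra $A=\mathcal O_{n,0}/(f_1,\dots,f_n)$ is Artinian, and $\pi_f(0)=\dim_{\mathbb C}A=\dim_{\mathbb C}\mathrm{gr}_{\mathfrak m}A=\dim_{\mathbb C}\mathbb C[x]/\mathrm{in}(I)$, where $\mathrm{in}(I)\supseteq(F_1,\dots,F_n)$ is the ideal of initial forms. When $F_1,\dots,F_n$ form a regular sequence in $\mathbb C[x]$ one has $\mathrm{in}(I)=(F_1,\dots,F_n)$ and the Hilbert series gives exactly $m_1\cdots m_n$; when they do not (i.e.\ $F^{-1}(0)$ is positive-dimensional) the containment is strict and a Koszul comparison forces $\dim_{\mathbb C}\mathbb C[x]/\mathrm{in}(I)>m_1\cdots m_n$. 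If you prefer to stay with your degree-theoretic perturbation approach, replace the heuristic by a proper curve-selection argument: the set $\{x\neq 0:\mathrm{rank}\,[f(x)\mid G(x)]\le 1\}$ is an analytic curve through $0$ whose tangent cone lies in $F^{-1}(0)$, and on it $\epsilon(x)=-f_j(x)/G_j(x)$ is well defined with $\epsilon(x)\to 0$ along any branch tangent to $F^{-1}(0)$; this produces the required nonzero root of $f_\epsilon$ rigorously.
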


\begin{lemma}[Shub and Sullivan, \cite{SS} or \cite{Zh2}]
\label{P8}Let $q>1$ be a positive integer and let $f\in
\mathcal{O}(\mathbb{C}^{n},0,0).$ If the origin is an isolated
fixed point of $f$ and for each eigenvalue $\lambda $ of the linearization matrix of $f$ at the origin,
either $\lambda=1$ or $\lambda ^{q}\neq 1$ holds, then the origin is also an
isolated fixed point of $f^{q}$ and $\mu_{f}(0)=\mu_{f^q}(0).$
\end{lemma}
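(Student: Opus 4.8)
The plan is to show that, under the stated hypothesis on the eigenvalues, the germ $f^{q}-id$ is \emph{algebraically equivalent} at the origin to $f-id$, so that both assertions of the lemma follow immediately from Lemma \ref{PR1}. Concretely, writing $\Lambda$ for the linearization of $f$ at $0$, I would produce a germ of a holomorphic family of matrices $B(x)$, defined on a neighborhood of $0$, with
$$f^{q}(x)-x=B(x)\,\bigl(f(x)-x\bigr),\qquad B(0)=I+\Lambda+\cdots+\Lambda^{q-1},$$
and then observe that $B(0)$ is invertible precisely because of the hypothesis.

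To construct $B$, set $h:=f-id$, so $h(0)=0$ and $Dh(0)=\Lambda-I$, and use the Hadamard-type identity $h(y)=h(x)+G(x,y)\,(y-x)$ for a holomorphic matrix germ $G$ near $(0,0)$ with $G(0,0)=Dh(0)=\Lambda-I$ (e.g.\ $G(x,y)=\int_{0}^{1}Dh\bigl(x+t(y-x)\bigr)\,dt$). Since $f(0)=0$, the iterates $f,f^{2},\dots,f^{q}$ are all defined on a sufficiently small ball around $0$, and telescoping gives $f^{q}(x)-x=\sum_{k=1}^{q}h\bigl(f^{k-1}(x)\bigr)$. An induction on $k$ then yields $h\bigl(f^{k-1}(x)\bigr)=E_{k-1}(x)\,h(x)$ for holomorphic matrix germs $E_{k-1}$ with $E_{k-1}(0)=\Lambda^{k-1}$: indeed $f^{k-1}(x)-x=\bigl(\sum_{i=0}^{k-2}E_{i}(x)\bigr)h(x)$, hence $h\bigl(f^{k-1}(x)\bigr)=h(x)+G\bigl(x,f^{k-1}(x)\bigr)\bigl(\sum_{i=0}^{k-2}E_{i}(x)\bigr)h(x)$, so that $E_{k-1}(0)=I+(\Lambda-I)(I+\Lambda+\cdots+\Lambda^{k-2})=\Lambda^{k-1}$. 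Summing over $k$, the germ $B(x):=\sum_{k=1}^{q}E_{k-1}(x)$ satisfies the displayed identity, with $B(0)=\sum_{k=0}^{q-1}\Lambda^{k}=:P(\Lambda)$.

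It remains to check that $B(0)=P(\Lambda)$ is invertible. Its eigenvalues are the numbers $P(\lambda)=1+\lambda+\cdots+\lambda^{q-1}$ as $\lambda$ ranges over the eigenvalues of $\Lambda$; one has $P(1)=q\neq0$, and for $\lambda\neq1$ one has $P(\lambda)=(\lambda^{q}-1)/(\lambda-1)$, which vanishes exactly when $\lambda^{q}=1$. Thus $P(\Lambda)$ is singular iff $\Lambda$ has an eigenvalue that is a $q$-th root of unity different from $1$ — which the hypothesis forbids. Hence $B(0)\in GL(n,\mathbb{C})$, so $B(x)\in GL(n,\mathbb{C})$ for all $x$ in a neighborhood of $0$, and the identity $f^{q}-id=B\cdot(f-id)$ exhibits $f^{q}-id$ and $f-id$ as algebraically equivalent at the origin. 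Since the origin is an isolated fixed point of $f$, i.e.\ an isolated zero of $f-id$, Lemma \ref{PR1} gives that it is an isolated zero of $f^{q}-id$, i.e.\ an isolated fixed point of $f^{q}$, and that $\mu_{f^{q}}(0)=\pi_{f^{q}-id}(0)=\pi_{f-id}(0)=\mu_{f}(0)$.

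There is little genuine difficulty here: the only points needing care are the holomorphic Hadamard division (routine via the integral formula, once one fixes a polydisc small enough that every $f^{k-1}(x)$ stays in the domain of $f$) and the bookkeeping in the induction; the eigenvalue computation is elementary. Note that $q>1$ is assumed only to make the statement non-vacuous, the argument being valid verbatim (and trivial) for $q=1$. One could alternatively deduce the result from reduction theorems stripping off the generalized eigenspaces of the eigenvalues $\lambda\neq1$, but the direct factorization above is the shortest route and is self-contained modulo Lemma \ref{PR1}.
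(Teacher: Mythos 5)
Your proof is correct: the Hadamard-type factorization $f^{q}-id=B(x)(f-id)$ with $B(0)=I+\Lambda+\cdots+\Lambda^{q-1}$, the spectral computation showing $B(0)$ is invertible exactly under the stated eigenvalue hypothesis, and the appeal to Lemma \ref{PR1} together give both conclusions. The paper itself does not prove this lemma but cites Shub--Sullivan and Zhang, and your argument is the standard one underlying those references, so there is nothing to add.
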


\section{Counting the numbers of periodic orbits hidden at fixed points\label{Sec3}}
\subsection{Resonant polynomial formal norms\label{S1}}
Let $\Lambda$ be an $n\times n$ matrix with
\begin{equation}
\label{F02}\Lambda={\rm diag}(A_{k_1},A_{k_2},...,A_{k_m}),
\end{equation}
where
\begin{equation*}
A_{k_j}=\begin{pmatrix}\lambda_j& 1& & & &\\
&\lambda_j &1 & & \\
& & \ddots&\ddots&\\
& &  &\lambda_j&1\\
& & & &\lambda_j
\end{pmatrix}_{k_j\times k_j},\ 1\le j\le m,\ k_1+k_2+\cdots+k_m=n
\end{equation*}
and $\lambda_j$ is the $d_j$-th primitive root of unity {\rm($j=1,2,\cdots,m$)} and $d_j$ is set to $0$ if $\lambda_j$ is not a root of unity. Denote by $e_1,\cdots,e_n$ the standard orthonormal basis in
$\mathbb{C}^n$. A monomial of degree greater than $1$ proportional to the monomial $x_1^{i_1}\cdots x_n^{i_n}e_s$
is said to be \emph{resonant with respect to} $\Lambda$, if the eigenvalue
$$\lambda_s=\lambda_1^{i_1}\cdots\lambda_n^{i_n}.$$
A germ $f\in\mathcal{O}(\mathbb{C}^n,0,0)$ is said to be \emph{in resonant polynomial normal form},
if $f(x)=\Lambda x+F(x)$, where $F(x)$ is a sum of finitely many resonant monomials with respect to $\Lambda$.
The set of all resonant polynomial normal forms with $\Lambda$ their linearization matrices at the origin in $\mathcal{O}(\mathbb{C}^n,0,0)$ is denoted by $\mathcal{RE}_{\Lambda}$ and the set of all resonant polynomial normal forms in $\mathcal{O}_{\Lambda}$ is denoted by $\mathcal{R}_{\Lambda}$.

The following theorem in \cite{Gor} tells us that every holomorphic germ in $\mathcal{O}_{\Lambda}$
can be reduced to a corresponding resonant polynomial normal form in terms of the numbers of period orbits
hidden at the origin.
\begin{theorem}[\protect{Gorbovickis}]
\label{T3.1}Assume the matrix $\Lambda$ is of the form $(\ref{F02})$. For every germ $f\in\mathcal{O}_{\Lambda}$,
there exists a germ $\tilde f\in\mathcal{R}_{\Lambda}$ such that
$\mathcal{N}_q(f)=\mathcal{N}_q(\tilde f)$ holds for all $q\geq1$.
\end{theorem}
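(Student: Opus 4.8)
The plan is to combine classical Poincar\'e--Dulac formal normalization with a finite truncation controlled by the fixed point indices of the iterates of $f$. The notion of a monomial resonant with respect to $\Lambda$ used above is exactly the one occurring in the Poincar\'e--Dulac theorem for germs of maps, so formal normal form theory (see \cite{ARN}) supplies a formal change of coordinates $\hat\Phi$, tangent to the identity, with
\[
f\comp\hat\Phi=\hat\Phi\comp(\Lambda x+\hat F(x))
\]
as formal power series, where $\hat F$ is a (generally divergent) series all of whose monomials are resonant with respect to $\Lambda$. Since $\hat F$ need not converge one cannot simply put $\tilde f=\Lambda x+\hat F$; the idea is to truncate $\hat\Phi$, and with it $\hat F$, at a degree so large that the truncation is invisible to every fixed point index entering the numbers $\mathcal N_q(f)$.

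The first point I would settle is the uniform bound: the set $\{\mu_{f^k}(0):k\ge1\}$ is finite. By M\"obius inversion of the identity defining $P_q$ (equivalently, Dold's relation, cf. \cite{Dol}) one has $\mu_{f^n}(0)=\sum_{d\mid n}P_d(f,0)$. By Theorem \ref{T1.2}, $P_q(f,0)=q\,\mathcal N_q(f)=0$ for every $q\ge 2$ with $q\notin PE(\Lambda)$, and $PE(\Lambda)$ is finite, since periodic vectors of a Jordan block of size $\ge2$ lie in its eigenline, so every element of $PE(\Lambda)$ is a least common multiple of orders of root-of-unity eigenvalues of $\Lambda$ and in particular divides $M(\Lambda)$. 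Hence $\mu_{f^n}(0)=P_1(f,0)+\sum_{d\mid n,\ d\in PE(\Lambda)}P_d(f,0)$ takes only finitely many values, and we may fix a positive integer $N$ with $N\ge\mu_{f^k}(0)$ for all $k\ge1$.

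Now the reduction. Let $\Phi_N$ be the polynomial obtained from $\hat\Phi$ by discarding all terms of degree $>N$; it is a genuine biholomorphic germ tangent to the identity. Using $f\comp\hat\Phi=\hat\Phi\comp(\Lambda x+\hat F)$ together with $\hat\Phi-\Phi_N=O(|x|^{N+1})$, one checks that $g:=\Phi_N^{-1}\comp f\comp\Phi_N$ agrees with $\Lambda x+\hat F$ through degree $N$, i.e. $g(x)=\Lambda x+F(x)+O(|x|^{N+1})$, where $F$ collects the resonant monomials of $\hat F$ of degree between $2$ and $N$. Set $\tilde f:=\Lambda x+F\in\mathcal{RE}_\Lambda$. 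Since $g$ is holomorphically conjugate to $f$, biholomorphic invariance of fixed point indices gives $\mu_{g^k}(0)=\mu_{f^k}(0)\le N$ for all $k\ge1$; and since $g$ and $\tilde f$ coincide through degree $N$ and share the linear part $\Lambda$, their iterates $g^k$ and $\tilde f^k$ coincide through degree $N$ as well, so $g^k-id$ and $\tilde f^k-id$ have identical homogeneous parts of degrees $1,\dots,N$, hence of degrees $1,\dots,\mu_{g^k}(0)$. Applying Lemma \ref{P5} to $g^k-id$ and $\tilde f^k-id$ then shows that the origin is an isolated zero of $\tilde f^k-id$ with $\mu_{\tilde f^k}(0)=\mu_{g^k}(0)=\mu_{f^k}(0)$ for every $k\ge1$. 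In particular the origin is an isolated fixed point of every iterate of $\tilde f$, so $\tilde f\in\mathcal R_\Lambda$; and since all fixed point indices of iterates of $\tilde f$ and $f$ agree, so do the Dold indices $P_q$, whence $\mathcal N_q(\tilde f)=\mathcal N_q(g)=\mathcal N_q(f)$ for all $q\ge1$, which is the assertion.

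The step I expect to be the main obstacle is the truncation bookkeeping: one must verify carefully that conjugating by $\Phi_N$ instead of by $\hat\Phi$ perturbs the normal form only in degrees $>N$, and that ``agreement through degree $N$'' is preserved under composition and iteration --- this is where it is essential that $f$, $g$ and $\tilde f$ all have linear part $\Lambda$, so that error terms of order $>N$ cannot acquire lower order. The finiteness input in the second paragraph is elementary once $PE(\Lambda)$ is known to be finite, but it must be stated precisely, since the whole construction rests on the single integer $N$.
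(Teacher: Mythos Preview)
The paper does not prove Theorem \ref{T3.1}; it quotes the result from \cite{Gor}, with the remark that Gorbovickis' formulation for diagonal $\Lambda$ adapts straightforwardly to the Jordan form (\ref{F02}). So there is no ``paper's own proof'' to compare against. Your argument is a correct self-contained proof along the standard line one would expect here: formal Poincar\'e--Dulac normalization, a uniform bound on $\mu_{f^k}(0)$ coming from the finiteness of $PE(\Lambda)$ together with the Dold relation $\mu_{f^n}(0)=\sum_{d\mid n}P_d(f,0)$, truncation of the formal conjugacy at that bound, and Lemma \ref{P5} (finite determinacy of the zero index) to transfer the indices from the genuinely conjugate germ $g$ to the resonant polynomial $\tilde f$.

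Two minor points. First, in your displayed identity for $\mu_{f^n}(0)$ you should write $\sum_{d\mid n,\,d\in PE(\Lambda)\cup\{1\}}P_d(f,0)$ rather than separating $P_1$ and then summing over $d\in PE(\Lambda)$, since when $1\in PE(\Lambda)$ your version double-counts the $d=1$ term; this is cosmetic and does not affect the finiteness conclusion. Second, the truncation bookkeeping you flag as the main concern is indeed routine: since $f$, $g$, $\tilde f$, $\Phi_N$ and $\Phi_N^{-1}$ all fix the origin, composing with any of them sends an $O(|x|^{N+1})$ error to an $O(|x|^{N+1})$ error, so agreement through degree $N$ propagates through composition and iteration exactly as you assert.
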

\begin{remark}
{\rm Gorbovickis' theorem is stated for diagonal matrices but the adaptation to $(\ref{F02})$ is straightforward.}
\end{remark}

\subsection{Counting the numbers of periodic orbits hidden at fixed points for general case\label{S2}}
\quad In this subsection, we will give a method to count the numbers of periodic orbits hidden at the origin
for resonant polynomial normal forms with the origin as an isolated fixed point of all their iterates.
The method comes from \cite{Qiao}. For convenience, we first introduce some notations as follows.

Let $W_n$ be the set of all words of 0's and 1's of length $n$ and $W_n^*=W_n\backslash\{(0\cdots0)\}$.
For every subset $S$ of the set $\{1,2,\cdots,n\}$, we set $W(S)=(w_1\cdots w_n)\in W_n$,
where $w_j=1$ if and only if $j\in S$. Similarly, given $w=(w_1\cdots w_n)\in W_n$,
we denote by $S(w)$ the set of all indices $j$ such that $w_j=1$. If $w\in W_n^*$ and
$S(w)=\{j_1,\cdots,j_{|w|}\}$, where $j_1<\cdots<j_{|w|}$ and $|w|=\sum_{j=1}^n w_j$,
the subspace of $\mathbb{C}^n$ spanned by the coordinates with indices from $S(w)$ will be denoted by
$\mathbb{C}^{|w|}$. There is a natural projection

$$p_w:\mathbb{C}^n\longrightarrow\mathbb{C}^{|w|},\ (x_1,\cdots,x_n)\mapsto(x_{j_1},\cdots,x_{j_{|w|}})$$
and a natural inclusion
$$i_w:\mathbb{C}^{|w|}\longrightarrow\mathbb{C}^n,\ (\tilde x_{j_1},\cdots,\tilde x_{j_{|w|}})\mapsto
(\tilde x_1,\cdots,\tilde x_n),$$
where $\tilde x_j=0$ if $j\not\in S(w)$.

Let $\Lambda$ be of the form (\ref{F02}) (or (\ref{F1.1})).
We set $s_0=0$ and $s_j=k_1+\cdots+k_j$ for $j=1,2,\cdots,m$.
We define a map $\theta$ :$\{1,2,\cdots,n\}\longrightarrow\{1,2,\cdots,m\}$ such that $\theta(j)=q$
if $s_{q-1}<j\leq s_q$. we set $w(\Lambda)=(w_1w_2\cdots w_n)\in W_n$ with $w_j=0$ if and only if $d(j)=0$.
For any positive integer $l$, we set
$w(\Lambda, l)=(w_{1l}w_{2l}\cdots w_{nl})\in W_n$ with $w_{jl}=1$ if and only if $d_{\theta(j)}\mid l$.
The notation $w(\Lambda, l)$ is also written as $w(l)$ when no ambiguity is caused. We define a map
$$\tau: \mathcal{RE}_{\Lambda}\rightarrow \mathcal{O}(\mathbb{C}^n,0,0),\ f(\bold{x})=\Lambda\bold{x}+F(\bold{x})
\mapsto \tau f(\bold{x})=(\Lambda-\tilde\Lambda)\bold{x}+F(\bold{x}),$$
where the entries of the main diagonal of $\tilde\Lambda$ are identical to that of $\Lambda$ and
the other entries are zeros. Note that the notation $\tilde\Lambda$ will be also used later.

The following theorem in \cite{Qiao} gives an efficient method to count fixed point indices of all iterates of
a holomorphic germ at the origin.
\begin{theorem}[\protect{Qiao, Qu and Zhang}]
\label{T3}Let $\Lambda$ be a matrix of the form {\rm(\ref{F02})} and $f$ be a germ in $\mathcal{RE}_{\Lambda}$.
Then the origin in $\mathbb{C}^{|w(\Lambda)|}$ is an isolated zero of
$p_{w(\Lambda)}\comp\tau f\comp i_{w(\Lambda)}$ if and only if the origin in
$\mathbb{C}^n$ is an isolated fixed point of $f^k$ for any $k\geq 1$. Moreover,
in this case we have for any $k\geq 1$
\[\mu_{f^k}(0)=\pi_{p_{w(k)}\comp\tau f\comp i_{w(k)}}(0).\]
\end{theorem}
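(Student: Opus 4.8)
The plan is to reduce everything to the theory of zero indices developed in Section~\ref{Sec2}, exploiting the special structure of a resonant polynomial normal form. First I would record the following elementary but crucial observation about the component functions of $f^k - id$. Write $f(\mathbf{x}) = \Lambda\mathbf{x} + F(\mathbf{x})$ with $F$ a finite sum of resonant monomials. Because each resonant monomial $x_1^{i_1}\cdots x_n^{i_n}e_s$ satisfies $\lambda_s = \lambda_1^{i_1}\cdots\lambda_n^{i_n}$, the linear parts of the coordinates of $f^k$ are governed purely by $\Lambda^k$, and in the $s$-th coordinate the diagonal entry of $\Lambda^k - I$ is $\lambda_s^k - 1$. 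If $d_{\theta(s)} \nmid k$ this is a nonzero constant, so the lowest-order part of the $s$-th component of $f^k - id$ is a nonzero linear form; if $d_{\theta(s)} \mid k$ (equivalently $w_{sk} = 1$, i.e. $s \in S(w(k))$) the constant term vanishes and that coordinate "degenerates". This is precisely the mechanism by which the index of $f^k$ should only see the coordinates indexed by $S(w(k))$.

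The second step is to make that reduction rigorous using the algebraic-equivalence and factorization lemmas. For a coordinate $s \notin S(w(k))$ whose leading term is an invertible linear form, I would peel it off: after reordering, the map $f^k - id$ has the form where those coordinates contribute a nondegenerate linear factor, and by Lemma~\ref{PR1} (algebraic equivalence) together with Lemma~\ref{P1} one can successively replace each such coordinate by the corresponding coordinate function $x_s$ itself without changing whether the origin is isolated and without changing the zero index. Iterating over all $s \notin S(w(k))$ reduces the computation of $\pi_{f^k - id}(0)$ to the zero index at the origin in $\mathbb{C}^{|w(k)|}$ of the map obtained by discarding those coordinates and restricting to the subspace they cut out — and this restricted map is exactly $p_{w(k)}\comp(f^k-id)\comp i_{w(k)}$ up to the same kind of equivalence. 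So $\mu_{f^k}(0) = \pi_{f^k - id}(0) = \pi_{p_{w(k)}\comp(f^k - id)\comp i_{w(k)}}(0)$, with the left side finite iff the right side is.

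The third step is to relate $p_{w(k)}\comp(f^k - id)\comp i_{w(k)}$ to $p_{w(k)}\comp\tau f\comp i_{w(k)}$. Here I would use Lemma~\ref{P4} on multiplicativity of zero indices under composition: on the subspace $i_{w(k)}(\mathbb{C}^{|w(k)|})$, restricted to the degenerate coordinates, the map $f^k$ factors (up to algebraic equivalence) as a $k$-fold composition each factor of which is algebraically equivalent to $\tau f$ restricted appropriately — the point being that on those coordinates $\Lambda$ acts as multiplication by a $d_{\theta(s)}$-th root of unity with $d_{\theta(s)}\mid k$, and $\tau f = (\Lambda - \tilde\Lambda)\mathbf{x} + F(\mathbf{x})$ is built so that subtracting the diagonal exactly accounts for one application of $(\Lambda^{?} - I)$-type cancellation. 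Lemma~\ref{P8} (Shub–Sullivan) handles the coordinates with $\lambda_s$ not a root of unity or with $\lambda_s^k \neq 1$, ensuring they do not obstruct isolatedness and do not contribute to the index beyond the linear factor already removed. Combining, the $k$-fold composition has zero index equal to the $k$-th power... no: more carefully, one shows directly that the restricted map is algebraically equivalent to $p_{w(k)}\comp\tau f\comp i_{w(k)}$, so their zero indices coincide, giving the displayed formula $\mu_{f^k}(0) = \pi_{p_{w(k)}\comp\tau f\comp i_{w(k)}}(0)$, and in particular with $k$ replaced by any fixed reference (e.g.\ $k$ large enough that $w(k) = w(\Lambda)$, such as $k = M(\Lambda)$) one gets the isolatedness equivalence with $p_{w(\Lambda)}\comp\tau f\comp i_{w(\Lambda)}$.

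The main obstacle I expect is the third step: correctly tracking how the finitely many resonant higher-order monomials of $F$ behave under the $k$-fold composition $f^k$ and verifying that, after the algebraic equivalences of the second step, what remains really is $\tau f$ restricted to the $w(k)$-coordinates and not some genuinely different higher-order perturbation. Resonance is exactly what guarantees that the monomials of $F$ "survive" composition in a controlled way (a resonant monomial for $\Lambda$ contributes a resonant-type term under iteration), but making the bookkeeping precise — in particular showing that the cross-terms coupling the $S(w(k))$ coordinates to the others can be absorbed into the algebraic-equivalence factor $A(x)\in GL(n,\mathbb{C})$ rather than changing the germ — is the delicate part, and is presumably where the detailed argument of \cite{Qiao} does its work.
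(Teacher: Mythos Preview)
The paper does not prove Theorem~\ref{T3}; it is quoted verbatim from \cite{Qiao} and used as a black box throughout. So there is no proof in the paper to compare your proposal against, but your outline can still be assessed.

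Your steps 1 and 2 are essentially right. Resonance forces the subspace $V_k:=i_{w(k)}(\mathbb{C}^{|w(k)|})$ to be $f$-invariant: a resonant monomial supported on $S(w(k))$ must land in a coordinate $e_s$ with $\lambda_s^k=\prod(\lambda_j^k)^{i_j}=1$, hence $s\in S(w(k))$. Since the normal directions carry an invertible linear part of $f^k-id$ and $V_k$ is invariant, one gets $\mu_{f^k}(0)=\mu_{f^k|_{V_k}}(0)$ directly (the implicit function solving the normal equations is identically zero). Your invocation of Lemma~\ref{P1} here is misplaced --- that lemma is about products of component functions, not elimination of variables --- but the conclusion survives.

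Step 3 is where there is a genuine gap. The detour through Lemma~\ref{P4} is a red herring (it would produce a $k$-th power of the index, as you noticed and then abandoned), and the bald assertion that $(f^k-id)|_{V_k}$ is algebraically equivalent to $\tau f|_{V_k}$ is precisely what needs proving. The missing idea is this: resonance gives the commutation $f\circ\tilde\Lambda=\tilde\Lambda\circ f$, so $g:=\tilde\Lambda^{-1}f$ also commutes with $\tilde\Lambda$, and therefore $f^k=(\tilde\Lambda g)^k=\tilde\Lambda^k g^k$. On $V_k$ one has $\tilde\Lambda^k=I$, so $f^k|_{V_k}=g^k|_{V_k}$. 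Now the linearization of $g|_{V_k}$ has \emph{all eigenvalues equal to $1$}, so Lemma~\ref{P8} applies to $g$, giving $\mu_{g^k|_{V_k}}(0)=\mu_{g|_{V_k}}(0)$. Finally $g-id=\tilde\Lambda^{-1}(f-\tilde\Lambda)=\tilde\Lambda^{-1}\,\tau f$ is algebraically equivalent to $\tau f$ by Lemma~\ref{PR1}, so $\mu_{g|_{V_k}}(0)=\pi_{\tau f|_{V_k}}(0)$. Chaining these gives the formula; the isolatedness equivalence follows by taking $k=M(\Lambda)$ (where $w(k)=w(\Lambda)$) for one direction and using $V_k\subseteq V_{M(\Lambda)}$ plus invariance for the other. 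Note that your use of Shub--Sullivan was aimed at the wrong target: you applied it to the coordinates with $\lambda_s^k\neq1$, where it is unnecessary (those are already linearly nondegenerate), whereas it is the unipotent map $g$ on the degenerate subspace that actually needs Lemma~\ref{P8}.
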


If $\Lambda$ is of the form (\ref{F1.1}) and $f\in\mathcal{R}_{\Lambda}$, then, by Theorem \ref{T3} and [\cite{Zh3}, Lemma 2.7], we have
\begin{equation}
\label{F3.7}\pi_{p_{w(d)}\comp\tau f\comp i_{w(d)}}(0)=\sum\limits_{q\mid d,q\in PE(\Lambda)\cup
\{1\}}q\mathcal{N}_q(f)
\end{equation}
for all $d\in PE(\Lambda)$. We can solve out $\mathcal{N}_q(f)$ for every $q\in PE(\Lambda)$ from the above equations,
and hence this gives us a general method to compute the numbers of periodic orbits hidden at the origin.
At the end of this subsection, we use this general method to compute the following example.
\begin{example}
\label{EX3.1}\rm{Let $\Lambda$ be of the form (\ref{F1.1}) with $m\geq2$ such that $d_j\mid_{\not=} d_{j+1}$ and
$\lambda_j=\lambda_{j+1}^{\frac{d_{j+1}}{d_j}}$ for $j=1,\cdots,m-2$, and $(d_{m-1},d_m)=1$ and $d_m>1$.

When $d_1>1$, let $g\in\mathcal{R}_{\Lambda}$ with $g(x_1,\cdots,x_n)=(g_1,g_2,\cdots,g_n)$ such that
\begin{align*}
g_{s_1}&=\lambda_1x_{s_1}+x_{s_0+1}^{r_1d_1+1}-x_{s_0+1}x_{s_{m-1}+1}^{r_mr_1d_m}+
x_{s_0+1}x_{s_{m-1}+1}^{r_{1m}d_m}+x_{s_1+1}^{\frac{d_2}{d_1}},\\
g_{s_{m-1}}&=\lambda_{m-1}x_{s_{m-1}+1}+x_{s_{m-2}+1}(x_{s_0+1}^{r_{m-1}d_1}
-x_{s_{m-1}+1}^{r_md_m}x_{s_0+1}^{(r_{m-1}-1)d_1}+x_{s_{m-1}+1}^{r_{(m-1)m}d_m}),\\
g_{s_m}&=\lambda_mx_{s_m}+x_{s_{m-1}+1}(x_{s_0+1}^{d_1}-x_{s_{m-1}+1}^{r_md_m}),\\
g_{s_t}&=\lambda_tx_{s_t}+x_{s_{t-1}+1}(x_{s_0+1}^{r_td_1}
-x_{s_{m-1}+1}^{r_md_m}x_{s_0+1}^{(r_t-1)d_1}
+x_{s_{m-1}+1}^{r_{tm}d_m})+x_{s_t+1}^{\frac{d_{t+1}}{d_t}}
\end{align*}
and
$$g_j=\lambda_{\theta(j)}x_j+x_{j+1}$$
for $t=2,\cdots,m-2$ and $j\in\{1,\cdots,n\}\setminus\{s_1,\cdots,s_m\}$.

By Lemma \ref{P1} and Lemma \ref{PR1}, we have that
$$\pi_{p_{w(d_m)}\comp\tau g\comp i_{w(d_m)}}(0)=r_md_m+1,$$
$$\pi_{p_{w(d_j)}\comp\tau g\comp i_{w(d_j)}}(0)=\sum_{s=1}^{j}r_sd_s+1$$
and
$$\pi_{p_{w(d_jd_m)}\comp\tau g\comp i_{w(d_jd_m)}}(0)=\sum_{s=1}^{j}r_sd_s+\sum_{s=1}^{j}r_{sm}d_sd_m+r_md_m+1$$
for $j=1,\cdots,m-1$.
Then
$$r_md_m=d_m\mathcal{N}_{d_m}(f),$$
$$\sum_{s=1}^{j}r_sd_s=\sum_{s=1}^{j}d_s\mathcal{N}_{d_s}(f)$$
and
$$\sum_{s=1}^{j}r_sd_s+\sum_{s=1}^{j}r_{jm}d_jd_m+r_md_m=\sum_{s=1}^{j}d_s\mathcal{N}_{d_s}(f)+
\sum_{s=1}^{j}d_sd_m\mathcal{N}_{d_sd_m}(f)+d_m\mathcal{N}_{d_m}(f)$$
for $j=1,\cdots,m-1$.
Combining these equations, we have that $\mathcal{N}_{d_j}(g)=r_j$ for $j=1,2,3,\cdots,m-1,m$ and
$\mathcal{N}_{d_jd_m}(g)=r_{jm}$ for $j=1,2,3,\cdots,m-1$.

When $d_1=1$, let $g\in\mathcal{R}_{\Lambda}$ with $g(x_1,\cdots,x_n)=(g_1,g_2,\cdots,g_n)$ such that
\begin{align*}
g_{s_1}&=\lambda_1x_{s_1}+x_{s_0+1}^{r_1d_1+1}+
x_{s_{m-1}+1}^{r_md_m}+x_{s_1+1}^{\frac{d_2}{d_1}},\\
g_{s_{m-1}}&=\lambda_{m-1}x_{s_{m-1}+1}+x_{s_{m-2}+1}(x_{s_0+1}^{r_{m-1}d_1}
+x_{s_{m-1}+1}^{r_{(m-1)m}d_m}),\\
g_{s_m}&=\lambda_{m}x_{s_{m}}+x_{s_{m-1}+1}x_{s_0+1}^{d_1},\\
g_{s_t}&=\lambda_tx_{s_t}+x_{s_{t-1}+1}(x_{s_0+1}^{r_td_1}
+x_{s_{m-1}+1}^{r_{tm}d_m})+x_{s_t+1}^{\frac{d_{t+1}}{d_t}}
\end{align*}
and
$$g_j=\lambda_{\theta(j)}x_j+x_{j+1}$$
for $t=2,\cdots,m-2$ and $j\in\{1,\cdots,n\}\setminus\{s_1,\cdots,s_m\}$.

Similarly, we have that $\mathcal{N}_{d_j}(g)=r_j$ for $j=2,3,\cdots,m-1,m$, $\mathcal{N}_{d_jd_m}(g)=r_{jm}$ for
$j=2,3,\cdots,m-1$ and $\mathcal{N}_{d_1}(g)=r_1+1$.}

\end{example}

\subsection{Counting the numbers of periodic orbits hidden at fixed points for a special case\label{S3}}
Let $\Lambda$ be of the form (\ref{F1.1}). If we add appropriate restrictions to $\Lambda$,
it is possible to get a more effective way to count the numbers of periodic orbits of holomorphic germs
hidden at the origin.

Assume that there exist $j_1,j_2,\cdots,j_t\in\{1,2,\cdots,m\}$ ($1\leq t\leq m$) such that
\begin{equation}
\label{F3.1} [d_{j_1},\cdots,d_{j_t}]=M(\Lambda)\ {\rm and}\ d_{j_s}\nmid [d_1,\cdots,d_{j_s-1},d_{j_s+1},\cdots,d_m]
\end{equation}
for $s=1,2,\cdots,t$, where $[d_{j_1},\cdots,d_{j_t}]$ denotes the lowest common multiple of
$d_{j_1},\cdots,d_{j_t}$ and $[d_1,\cdots,d_{j_s-1},d_{j_s+1},\cdots,d_m]$ denotes the lowest common multiple of
$d_1,\cdots,d_{j_s-1},d_{j_s+1},\cdots,d_m$. Let $f\in\mathcal{R}_{\Lambda}$. In this subsection we will give a more efficient method to
count the numbers of periodic orbits of $f$ hidden at the origin. This method is a natural extension of
[\cite{Gor}, Theorem 3.4] or [\cite{Qiao}, Corollary 1.2].

Let $\tau f=(f_1,f_2,\cdots,f_n)$. Then
\begin{equation}
\label{F3.5}f_j=x_{j+1}+{\rm higher\ terms}
\end{equation}
for all $j\in\{1,2,\cdots,n\}\setminus\{s_1,\cdots,s_m\}$.
First, we consider the special case of $f$ that all variables of every monomial of $f_{s_1},\cdots,f_{s_m}$ come from
$x_{s_0+1},x_{s_1+1},\cdots,x_{s_{m-1}+1}$. For any $s\in\{1,2,\cdots,t\}$, by (\ref{F3.1}), we have
$x_{s_{j_s-1}+1}\mid f_{s_{j_s}}$. Let $\tilde\tau f=(g_1,g_2,\cdots,g_n)$ with
$g_{s_{j_s}}=\frac{f_{s_{j_s}}}{x_{s_{j_s-1}+1}}$ for $s\in \{1,2,\cdots,t\}$ and $g_j=f_j$ for
$j\in\{1,\cdots,n\}\setminus\{s_{j_1},\cdots,s_{j_t}\}$. It is easy to see that
the origin is an isolated zero of $\tilde\tau f$. Furthermore, we have the following proposition,
whose proof is a direct extension of the proof of [\cite{Qiao}, Corollary 1.2].
\begin{proposition}
\label{L2}$\mathcal{N}_{M(\Lambda)}(f)=\frac{\pi_{\tilde\tau f}(0)}{M(\Lambda)}$.
\end{proposition}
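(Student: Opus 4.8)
The plan is to reduce the computation of $\mathcal{N}_{M(\Lambda)}(f)$ to a zero-index computation via Theorem \ref{T3} and equation (\ref{F3.7}), and then to compare the zero index of $p_{w(M(\Lambda))}\comp\tau f\comp i_{w(M(\Lambda))}$ with that of $\tilde\tau f$ using the factorization Lemma \ref{P1}. First I would observe that since $[d_{j_1},\dots,d_{j_t}]=M(\Lambda)$ and each $d_{j_s}$ fails to divide the lcm of the remaining degrees, the word $w(M(\Lambda))$ picks out exactly all coordinates whose block-degree divides $M(\Lambda)$, i.e. all of them, so $p_{w(M(\Lambda))}\comp\tau f\comp i_{w(M(\Lambda))}=\tau f$ on $\mathbb{C}^n$. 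Thus by Theorem \ref{T3}, $\mu_{f^{M(\Lambda)}}(0)=\pi_{\tau f}(0)$, and then by (\ref{F3.7}),
\begin{equation*}
\pi_{\tau f}(0)=\sum_{q\mid M(\Lambda),\, q\in PE(\Lambda)\cup\{1\}} q\,\mathcal{N}_q(f).
\end{equation*}
The same identity, applied to every proper divisor $d$ of $M(\Lambda)$ that lies in $PE(\Lambda)$, shows that the ``lower-order'' terms $\sum_{q\mid d} q\mathcal{N}_q(f)$ are each realized as $\pi_{p_{w(d)}\comp\tau f\comp i_{w(d)}}(0)$.

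Next I would analyze $\tilde\tau f$. By construction $\tilde\tau f$ differs from $\tau f$ only in the components indexed by $s_{j_1},\dots,s_{j_t}$, where each $f_{s_{j_s}}=x_{s_{j_s-1}+1}\cdot g_{s_{j_s}}$ factors off one linear coordinate. Applying Lemma \ref{P1} repeatedly (once for each $s=1,\dots,t$), I get
\begin{equation*}
\pi_{\tau f}(0)=\pi_{\tilde\tau f}(0)+\sum_{s=1}^{t}\pi_{h_s}(0),
\end{equation*}
where $h_s$ is the map agreeing with $\tau f$ except that its $s_{j_s}$-th component is replaced by $x_{s_{j_s-1}+1}$. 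The key point is then to identify $\sum_s \pi_{h_s}(0)$ with $\sum_{q\mid M(\Lambda),\,q\neq M(\Lambda)} q\mathcal{N}_q(f)$ — that is, with the contribution of all \emph{proper} divisors of $M(\Lambda)$ in $PE(\Lambda)\cup\{1\}$. Heuristically, replacing the $s_{j_s}$-th component by the bare linear term $x_{s_{j_s-1}+1}$ kills all periodic orbits whose period is a multiple of $d_{j_s}$ (in particular all orbits of period $M(\Lambda)$, and more), leaving precisely the orbits whose periods divide some $[d_1,\dots,\widehat{d_{j_s}},\dots,d_m]$; summing over $s$ with inclusion–exclusion built into the special structure (\ref{F3.1}) should recover exactly the proper-divisor sum. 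Making this precise is where I would lean most heavily on the cited proof of [\cite{Qiao}, Corollary 1.2], adapting its bookkeeping of which coordinates survive under the projection/inclusion to the present setting with $t$ distinguished blocks rather than one.

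Combining the two displays gives $M(\Lambda)\,\mathcal{N}_{M(\Lambda)}(f)=\pi_{\tau f}(0)-\sum_{q\mid M(\Lambda),\,q\neq M(\Lambda)}q\mathcal{N}_q(f)=\pi_{\tilde\tau f}(0)$, which is the claimed identity. The main obstacle I anticipate is the middle step: rigorously showing $\sum_{s=1}^{t}\pi_{h_s}(0)=\sum_{q\mid M(\Lambda),\,q\neq M(\Lambda),\,q\in PE(\Lambda)\cup\{1\}}q\mathcal{N}_q(f)$. This requires (i) showing each $h_s$ again satisfies the hypotheses of Theorem \ref{T3} or a variant thereof, so that its zero index is itself a weighted sum of $\mathcal{N}_q$'s over an appropriate divisor set, and (ii) checking that the special hypothesis (\ref{F3.1}) — specifically $d_{j_s}\nmid[d_1,\dots,\widehat{d_{j_s}},\dots,d_m]$ — is exactly what makes the divisor sets $\{q : d_{j_s}\nmid q,\ q\mid M(\Lambda)\}$ for $s=1,\dots,t$ partition (with the right multiplicities) the set of proper divisors of $M(\Lambda)$ in $PE(\Lambda)\cup\{1\}$. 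Everything else — the word computation, the Lemma \ref{P1} factorizations, and the final linear-algebra cancellation — is routine given the machinery already assembled in Section \ref{Sec3}.
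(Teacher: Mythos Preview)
Your overall strategy—using Theorem~\ref{T3} and (\ref{F3.7}) to pass to zero indices, then Lemma~\ref{P1} to peel the linear factors off the distinguished components—is the right one, and it matches in spirit what the paper intends (the paper gives no argument here, only the remark that the proof is ``a direct extension of the proof of [\cite{Qiao}, Corollary~1.2]''). However, your decomposition formula
\[
\pi_{\tau f}(0)=\pi_{\tilde\tau f}(0)+\sum_{s=1}^{t}\pi_{h_s}(0),
\]
with $h_s$ defined as ``$\tau f$ with only the $s_{j_s}$-th component replaced by $x_{s_{j_s-1}+1}$,'' is not what iterating Lemma~\ref{P1} actually yields when $t\ge 2$. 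If you apply Lemma~\ref{P1} at position $s_{j_1}$ and then at position $s_{j_2}$ on the piece that already carries $g_{s_{j_1}}$ in slot $s_{j_1}$, the second correction term has $g_{s_{j_1}}$ (not $f_{s_{j_1}}$) in slot $s_{j_1}$. Iterating at all $t$ positions, splitting both pieces each time, produces $2^t$ terms indexed by subsets $T\subseteq\{1,\dots,t\}$: the term for $T$ has $g_{s_{j_s}}$ in slot $s_{j_s}$ for $s\in T$ and the bare $x_{s_{j_s-1}+1}$ for $s\notin T$, with $T=\{1,\dots,t\}$ giving $\tilde\tau f$. So what must be identified with the proper-divisor contribution is the sum of the remaining $2^t-1$ terms, not $t$ terms.

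Relatedly, your ``partition'' heuristic cannot work as stated: the sets $\{q\in PE(\Lambda)\cup\{1\}:q\mid M(\Lambda),\ d_{j_s}\nmid q\}$ for $s=1,\dots,t$ all contain $q=1$ (each $d_{j_s}>1$ by (\ref{F3.1})), so they overlap rather than partition the proper divisors—this overlap is precisely what the $2^t$-term expansion records. With the correct expansion in hand, each term with $T\subsetneq\{1,\dots,t\}$ has linear slots killing the blocks $j_s$ for $s\notin T$; its zero index then reduces (via Lemma~\ref{PR1} or direct elimination) to a zero index on the complementary coordinate subspace, and matching that family of indices against (\ref{F3.7}) for the relevant divisors $d\mid M(\Lambda)$ is the substance of the extension from the one-block case in [\cite{Qiao}, Corollary~1.2]. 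Your plan is on the right track, but the Lemma~\ref{P1} step needs this correction before the bookkeeping can close.
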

For any positive integer $q\in PE(\Lambda)$, the matrix $p_{w(q)}\comp\Lambda\comp i_{w(q)}$
has also the form (\ref{F1.1}), and if it satisfies (\ref{F3.1}), we can obtain
$\tilde\tau(p_{w(q)}\comp f\comp i_{w(q)})$ from $\tau(p_{w(q)}\comp f\comp i_{w(q)})$,
as $\tilde\tau f$ is obtained from $\tau f$. Then by Proposition \ref{L2}, we have
\begin{equation*}
\mathcal{N}_q(p_{w(q)}\comp f\comp i_{w(q)})=\frac{\pi_{\tilde\tau(p_{w(q)}\comp f\comp
i_{w(q)})}(0)}{q}.
\end{equation*}
According to Theorem \ref{T3}, it is easy to see that
\begin{equation*}
\mathcal{N}_q(f)=\mathcal{N}_q(p_{w(q)}\comp f\comp i_{w(q)}).
\end{equation*}
Thus we obtain that
\begin{equation}
\mathcal{N}_q(f)=\frac{\pi_{\tilde\tau(p_{w(q)}\comp f\comp
i_{w(q)})}(0)}{q}.\label{FT}
\end{equation}
The formula (\ref{FT}) gives us an efficient method to count the numbers of periodic orbits
hidden at the origin for the above special case (Interested readers can also use Proposition \ref{L2}
to compute Example \ref{EX3.1}).

The following proposition tells us that the general case of $f$ can be reduced to this special case.

\begin{proposition}
There exists $\tilde f\in\mathcal{R}_{\Lambda}$ with $\tau\tilde f=(\tilde f_1,\tilde f_2,\cdots,\tilde f_n)$
such that $\mathcal{N}_q(\tilde f)=\mathcal{N}_q(f)$ for all $q\geq1$ and
all variables of every monomial of $\tilde f_{s_1},\cdots,\tilde f_{s_m}$ come from
$x_{s_0+1},x_{s_1+1},\cdots,x_{s_{m-1}+1}$. Moreover, we can further require that
$\tilde f_j=f_j$ for all $j\in\{1,2,\cdots,n\}\setminus\{s_1,\cdots,s_m\}$.\label{p3.3}
\end{proposition}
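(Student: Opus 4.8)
The plan is to reduce the germ $f$ to one whose "pivot components" $f_{s_1},\dots,f_{s_m}$ involve only the variables $x_{s_0+1},x_{s_1+1},\dots,x_{s_{m-1}+1}$, doing so one offending monomial at a time while keeping all invariants $\mathcal{N}_q$ fixed. The mechanism for not changing the $\mathcal{N}_q$'s is the combination of Theorem~\ref{T3} and Lemma~\ref{P5}: by Theorem~\ref{T3}, for each relevant period $k$ the index $\mu_{f^k}(0)$ equals $\pi_{p_{w(k)}\comp\tau f\comp i_{w(k)}}(0)$, and by Lemma~\ref{P5} this zero index is unchanged if we alter the homogeneous terms of $\tau f$ of degree strictly larger than the multiplicity. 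So the strategy is to show that every monomial in $f_{s_i}$ that uses a "bad" variable (one of the $x_j$ with $j\notin\{s_0+1,\dots,s_{m-1}+1\}$, i.e.\ not a bottom-row coordinate of its Jordan block) can be replaced, modulo sufficiently high order, by monomials using only bottom-row variables, via the relations coming from the lower components $f_j = x_{j+1} + (\text{higher})$ for $j\notin\{s_1,\dots,s_m\}$ — equation~(\ref{F3.5}).

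Concretely, I would set up an induction or a well-founded ordering on monomials. For a fixed Jordan block, the chain of equations $f_{s_{i-1}+1}=x_{s_{i-1}+2}+\cdots$, $f_{s_{i-1}+2}=x_{s_{i-1}+3}+\cdots$, \dots expresses, on the zero set we care about, each non-bottom variable $x_{j}$ (for $j$ in the interior of the block) in terms of $x_{j+1}$ plus higher-order corrections, hence eventually in terms of the bottom variable $x_{s_i}$ of that block plus higher-order junk — wait, more precisely one should run this the other way: the "formal substitution" that kills $x_j$ in favour of bottom variables is implemented by a holomorphic change of coordinates, or directly by the algebraic-equivalence/perturbation lemmas. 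I would use a biholomorphic change of coordinates near the origin (which, as the paper's earlier remarks note, changes no fixed point index of any iterate, hence no $\mathcal{N}_q$) that is the identity on the bottom-row coordinates and rectifies the lower components so that $f_j = \lambda_{\theta(j)}x_j + x_{j+1}$ exactly for all non-pivot $j$; after that change, one solves $x_{j+1}$-type relations and substitutes. The point is that such a coordinate change lands us back in $\mathcal{R}_\Lambda$ (resonance is preserved because the change respects the grading attached to $\Lambda$) and preserves $f_j=f_j$ for the non-pivot indices as required in the last sentence of the statement — this is the extra "moreover" clause, and it is automatic once the change of coordinates is taken to be the identity off the pivot components.

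The main obstacle I anticipate is bookkeeping the resonance condition through the substitution: when I replace a bad monomial $x_1^{i_1}\cdots x_n^{i_n}e_{s_i}$ by expressions in the bottom variables, I must check that the new monomials are still resonant with respect to $\Lambda$, i.e.\ still satisfy $\lambda_{s_i}=\lambda_1^{i_1}\cdots\lambda_n^{i_n}$. Because all variables $x_j$ in the same Jordan block carry the same eigenvalue $\lambda_{\theta(j)}$, replacing $x_j$ by (a monomial of the same total degree-within-block in) bottom-row variables of that same block does not change the product of eigenvalues, so resonance is preserved degree-by-degree — but one has to make this precise and confirm the higher-order correction terms introduced by the substitution can themselves be chosen resonant (or discarded, being of high enough order to be invisible to Lemma~\ref{P5}). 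A secondary technical point is ensuring the origin remains an isolated fixed point of all iterates throughout, i.e.\ that $\tilde f\in\mathcal{O}_\Lambda$; this follows from Theorem~\ref{T3} applied in reverse, since the relevant zero index $\pi_{p_{w(\Lambda)}\comp\tau\tilde f\comp i_{w(\Lambda)}}(0)$ is finite by construction. Once these resonance and isolatedness checks are in place, the rest is the routine monomial-by-monomial replacement, and the proposition follows.
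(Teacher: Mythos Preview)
Your high-level plan is right: use the relations $f_j = x_{j+1} + (\text{higher})$ for non-pivot $j$ to trade bad variables in the pivot components for the good variables $x_{s_0+1},\dots,x_{s_{m-1}+1}$, and invoke Theorem~\ref{T3} so that controlling the zero indices of the projections $p_{w(q)}\circ\tau f\circ i_{w(q)}$ controls all the $\mathcal{N}_q(f)$. Your remark on resonance preservation (all coordinates of a Jordan block carry the same eigenvalue, so substitutions within a block do not disturb the resonance relation) is also the right observation.

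However, the route you actually commit to --- a biholomorphic change of coordinates that rectifies the non-pivot components to $\lambda_{\theta(j)}x_j+x_{j+1}$ exactly --- does not deliver the ``moreover'' clause, and your claim that it is ``automatic once the change of coordinates is taken to be the identity off the pivot components'' is confused. A coordinate change acts on the \emph{input} variables and therefore alters every component of $f$; there is no way to straighten the non-pivot components $f_j$ while leaving them literally equal to the original $f_j$. (Also a terminological slip: the good variables are the \emph{first} coordinates $x_{s_{j-1}+1}$ of each block, not the ``bottom'' ones $x_{s_j}$; the relations $f_j=x_{j+1}+\cdots$ let you eliminate $x_{j+1}$, not $x_j$.)

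The paper does not change coordinates at all. It modifies only the \emph{output} components $f_{s_1},\dots,f_{s_m}$, by subtracting from each $f_{s_j}$ explicit holomorphic multiples of products of the non-pivot components $f_\ell$ (for $\ell\notin\{s_1,\dots,s_m\}$). Since $f_\ell=x_{\ell+1}+(\text{higher})$, such a subtraction cancels the bad-variable monomials of a given degree while introducing only higher-degree error terms; and because each modification replaces $\tau f$ by $A(x)\,\tau f$ for some holomorphic $A(x)\in GL(n,\mathbb C)$ on every relevant coordinate subspace, Lemma~\ref{PR1} (not merely Lemma~\ref{P5}) guarantees that all the zero indices $\pi_{p_{w(q)}\circ\tau f\circ i_{w(q)}}(0)$ are preserved exactly. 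One iterates this degree by degree, getting germs $f^{(l)}$ whose pivot components are clean up to degree $l+1$ and whose non-pivot components are \emph{literally unchanged} (this is what gives the ``moreover'' for free). Finally, one takes $l=l_{\max}:=\max_q \pi_{p_{w(q)}\circ\tau f\circ i_{w(q)}}(0)$ and invokes Lemma~\ref{P5} to discard the remaining junk above degree $l_{\max}$, obtaining the polynomial $\tilde f$. Resonance is checked by the one-line computation $f_{s_j}^{(l)}\circ\tilde\Lambda=\lambda_{\theta(s_j)}f_{s_j}^{(l)}$, which follows since each $f_s$ already satisfies $f_s\circ\tilde\Lambda=\lambda_{\theta(s)}f_s$. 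So the ingredient you underweighted is Lemma~\ref{PR1}: it is what lets you clean low-degree monomials, whereas Lemma~\ref{P5} is only used once at the end to truncate.
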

\begin{proof}
Let
$$f_{s_j}(x_1,\cdots,x_n)=\sum_{i_1+\cdots+i_n=2}^{\infty}c_{i_1\cdots i_n}^{(j)}x_1^{i_1}\cdots x_n^{i_n}$$
and $c_{i_1\cdots i_n}^{(j)}$, $i_1+\cdots+i_n=2,3,\cdots$ are complex numbers with finitely many nonzero terms
for $j=1,\cdots,m$. Let
$$f_{s_j}^{(1)}(x_1,\cdots,x_n)=f_{s_j}(x_1,\cdots,x_n)
-\sum_{\substack{i_1+i_2+\cdots+i_n=2,\\i_{s_0+1}+i_{s_1+1}+\cdots+i_{s_{m-1}+1}\leq1}}c_{i_1\cdots i_n}^{(j)}
\frac{\prod_{j=1}^{n-1}f_j^{i_{j+1}}}{\prod_{j=1}^{m-1}f_{s_j}^{i_{s_j+1}}}\prod_{j=1}^{m}x_{s_{j-1}+1}^{i_{s_{j-1}+1}}$$
for $j=1,\cdots,m$. By (\ref{F3.5}), we can get that all variables of monomials of degree $2$ of
$f_{s_1}^{(1)},\cdots,f_{s_m}^{(1)}$ come from $x_{s_0+1},x_{s_1+1},\cdots,x_{s_{m-1}+1}$.
Since $f_s\comp\tilde\Lambda=\lambda_{\theta(s)}f_s$ for $s=1,2,\cdots,n$, we have that
$f_{s_j}^{(1)}\comp\tilde\Lambda=\lambda_{\theta(s_j)}f_{s_j}^{(1)}$ for $j=1,\cdots,m$.
Let
$$f^{(1)}(\bold{x})=\tilde\Lambda\bold{x}+(f_1^{(1)}(\bold{x}),\cdots,f_n^{(1)}(\bold{x}))$$
with $f_j^{(1)}(\bold{x})=f_{j}(\bold{x})$
for all $j\in\{1,2,\cdots,n\}\setminus\{s_1,\cdots,s_m\}$. Then $f^{(1)}\comp\tilde\Lambda=\tilde\Lambda f^{(1)}$, that is, $f^{(1)}$ is in resonant polynomial formal norm. Combining Lemma \ref{PR1} and the definition of $f^{(1)}$, we see
that
\begin{equation}
\label{F3.9}\pi_{p_{w(q)}\comp\tau f\comp i_{w(q)}}(0)=\pi_{p_{w(q)}\comp \tau f^{(1)}\comp
i_{w(q)}}(0)
\end{equation}
for all $q\geq1$. Together with Theorem \ref{T3}, we have that $f^{(1)}\in\mathcal{R}_{\Lambda}$ and
\begin{equation*}
\mathcal{N}_q(f^{(1)})=\mathcal{N}_q(f)
\end{equation*}
for all $q\geq1$.

Similarly, let
$$f_{s_j}^{(1)}(x_1,\cdots,x_n)=\sum_{i_1+\cdots+i_n=2}^{\infty}c_{i_1\cdots i_n}^{(1j)}x_1^{i_1}
\cdots x_n^{i_n}$$
and $c_{i_1\cdots i_n}^{(1j)}$, $i_1+\cdots+i_n=2,3,\cdots$ are complex numbers with finitely many nonzero terms for $j=1,2,\cdots,m$. Let
$$f_{s_j}^{(2)}(x_1,\cdots,x_n)=f_{s_j}^{(1)}(x_1,\cdots,x_n)
-\sum_{\substack{i_1+i_2+\cdots+i_n=3,\\i_{s_0+1}+i_{s_1+1}+\cdots+i_{s_{m-1}+1}\leq2}}c_{i_1\cdots i_n}^{(1j)}
\frac{\prod_{j=1}^{n-1}f_j^{i_{j+1}}}{\prod_{j=1}^{m-1}f_{s_j}^{i_{s_j+1}}}\prod_{j=1}^{m}x_{s_{j-1}+1}^{i_{s_{j-1}+1}}$$
for $j=1,\cdots,m$. By (\ref{F3.5}), we can get that all variables of monomials of degree less than $4$ of
$f_{s_1}^{(2)},\cdots,f_{s_m}^{(2)}$ come from $x_{s_0+1},x_{s_1+1},\cdots,x_{s_{m-1}+1}$.
It is easy to see that $f_{s_j}^{(2)}\comp\tilde\Lambda=\lambda_{\theta(s_j)}f_{s_j}^{(2)}$ for
$j=1,\cdots,m$. Let
$$f^{(2)}(\bold{x})=\tilde\Lambda\bold{x}+(f_1^{(2)}(\bold{x}),\cdots,f_n^{(2)}(\bold{x}))$$
with
$f_j^{(2)}(\bold{x})=f_{j}(\bold{x})$ for all $j\in\{1,2,\cdots,n\}\setminus\{s_1,\cdots,s_m\}$. Then
$f^{(2)}\comp\tilde\Lambda=\tilde\Lambda f^{(2)}$, that is, $f^{(2)}$ is in resonant polynomial normal form.
According to Lemma \ref{PR1}, the definition of $f^{(2)}$ and (\ref{F3.9}), we have
\begin{equation}
\pi_{p_{w(q)}\comp\tau f\comp i_{w(q)}}(0)=\pi_{p_{w(q)}\comp \tau f^{(2)}\comp
i_{w(q)}}(0)
\end{equation}
for all $q\geq1$. Together with Theorem \ref{T3}, we have that $f^{(2)}\in\mathcal{R}_{\Lambda}$ and
\begin{equation*}
\mathcal{N}_q(f^{(2)})=\mathcal{N}_q(f)
\end{equation*}
for all $q\geq1$.

Similarly, for all $l\geq1$, there exists $f^{(l)}\in\mathcal{R}_{\Lambda}$ with
$\tau f^{(l)}=(f_1^{(l)},\cdots,f_n^{(l)})$ such that all variables of monomials of degree less than $l+2$ of
$f_{s_1}^{(l)},\cdots,f_{s_m}^{(l)}$ come from $x_{s_0+1},x_{s_1+1},\cdots,x_{s_{m-1}+1}$ and
$f_j^{(l)}=f_{j}$ for all $j\in\{1,2,\cdots,n\}\setminus\{s_1,\cdots,s_m\}$,
and at the same time
\begin{equation}
\label{F3.8}\pi_{p_{w(q)}\comp\tau f\comp i_{w(q)}}(0)=\pi_{p_{w(q)}\comp \tau f^{(l)}\comp i_{w(q)}}(0),
\end{equation}
$f^{(l)}\in\mathcal{R}_{\Lambda}\ {\rm and}\ \mathcal{N}_q(f^{(l)})=\mathcal{N}_q(f)$
for all $q\geq1$.

We set
$$l_{max}:=\max\limits_{l\geq1}\pi_{p_{w(l)}\comp \tau f\comp i_{w(l)}}(0).$$
By (\ref{F3.8}), we have
\begin{equation}
\label{F3.10}l_{max}=\max\limits_{l\geq1}\pi_{p_{w(l)}\comp \tau f^{(l_{max})}\comp i_{w(l)}}(0).
\end{equation}
Let $\tilde f\in\mathcal{R}_{\Lambda}$
with $\tau\tilde f=(\tilde f_1,\tilde f_2,\cdots,\tilde f_n)$
such that
\begin{itemize}
\item every monomial of $f_{s_j}^{(l_{max})}-\tilde f_{s_j}$
has degree greater than $l_{max}$ for $j=1,\cdots,m$,
\item $\tilde f_{s_j}$ has no monomial of degree greater than $l_{max}$ for $j=1,\cdots,m$,
\item $\tilde f_j=f_{j}$ for all $j\in\{1,2,\cdots,n\}\setminus\{s_1,\cdots,s_m\}$.
\end{itemize}
By Lemma \ref{P5} and (\ref{F3.10}), we have that for all $q\geq1$,
$$\pi_{p_{w(q)}\comp\tau\tilde f\comp i_{w(q)}}(0)=\pi_{p_{w(q)}\comp \tau f^{(l_{max})}\comp
i_{w(q)}}(0).$$
Together with (\ref{F3.8}), we have that for all $q\geq1$,
$$\pi_{p_{w(q)}\comp\tau\tilde f\comp i_{w(q)}}(0)=\pi_{p_{w(q)}\comp \tau f\comp i_{w(q)}}(0).$$
Together with Theorem \ref{T3}, we have that $\tilde f\in\mathcal{R}_{\Lambda}$ and
\begin{equation*}
\mathcal{N}_q(\tilde f)=\mathcal{N}_q(f)
\end{equation*}
for all $q\geq1$. Thus $\tilde f$ is exactly what we want.
\end{proof}


At last, based on the method of this subsection, we give an extensional version of Theorem \ref{T1.3},
which will be needed in the proof of Theorem \ref{T1} in Section \ref{s5}.

\begin{proposition}
Let $\Lambda$ be of the form {\rm(\ref{F1.1})} such that $d_1,d_2,\cdots,d_m$ are pairwise relatively prime and
at most one of them is $1$. Then $\Lambda$ is universal if and only if $m\leq3$ and one of $d_1$, $d_2$ and
$d_3$ is $1$
if $m=3$.\label{Cr3.2}
\end{proposition}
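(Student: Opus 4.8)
The plan is to strip the statement down, step by step, to Gorbovickis' Theorem \ref{T1.3}, and then to treat separately the only genuinely new situation: many blocks together with one trivial eigenvalue.

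First I would dispose of the Jordan block sizes. Since $d_1,\dots,d_m$ are pairwise coprime and at most one of them is $1$, a non‑zero vector is periodic for $\mathbf x\mapsto\Lambda\mathbf x$ exactly when, in each block in which its component is non‑zero, that component lies in $\ker(A_{k_j}-\lambda_j I)$, and then its period is the product of the corresponding $d_j$; hence $PE(\Lambda)=\{\prod_{j\in S}d_j:\emptyset\neq S\subseteq\{1,\dots,m\}\}$, and $1\in PE(\Lambda)$ iff some $d_j=1$. In particular $AS(\Lambda)$, and the notion of universality, depend only on $m$ and on whether some $d_j$ equals $1$, \emph{provided} the reduction does not affect realizability. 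That last point I would obtain from Theorem \ref{T3.1} (pass to $f\in\mathcal R_\Lambda$), Proposition \ref{p3.3} (arrange that $f_{s_1},\dots,f_{s_m}$ use only the designated variables $x_{s_0+1},\dots,x_{s_{m-1}+1}$ while $\tau f_l=x_{l+1}$ on the remaining coordinates), and Theorem \ref{T3} together with Lemma \ref{P1}: each shift coordinate $\tau f_l=x_{l+1}$ contributes a factor $1$ to every $\pi_{p_{w(k)}\circ\tau f\circ i_{w(k)}}(0)$, so deleting the non‑designated coordinates changes none of the numbers $\mu_{f^k}(0)$, hence none of the $\mathcal N_q(f)$. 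Thus I may assume throughout that $\Lambda=\mathrm{diag}(\lambda_1,\dots,\lambda_m)$.

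If $d_1,\dots,d_m>1$, then $\Lambda$ is exactly the matrix of Theorem \ref{T1.3}, which is universal iff $m\le2$; and when no $d_j$ is $1$ the condition of the proposition reduces to $m\le2$, so this case is done. Now suppose exactly one $d_j$ equals $1$; reorder so that $d_m=1$ and set $\Lambda''=\mathrm{diag}(\lambda_1,\dots,\lambda_{m-1})$, whose eigenvalues have pairwise coprime degrees all $>1$, so by Theorem \ref{T1.3} $\Lambda''$ is universal iff $m-1\le2$. It suffices to prove that $\Lambda$ is universal iff $m\le3$. For $m\le3$ I would simply exhibit a realization of each admissible $\{a_q\}$: for $m=1$ take $f(x)=x+x^{a_1}$; for $m=2$, where $PE(\Lambda)=\{1,d_1\}$, take
\[
f(x_1,x_2)=\big(\lambda_1 x_1+x_1 x_2^{a_{d_1}},\; x_2+x_2^{a_1}+x_1^{d_1}\big),
\]
for which a routine computation of intersection numbers gives $\pi_{p_{w(1)}\circ\tau f\circ i_{w(1)}}(0)=a_1$ and $\pi_{p_{w(d_1)}\circ\tau f\circ i_{w(d_1)}}(0)=a_1+d_1 a_{d_1}$, hence $\mathcal N_1(f)=a_1$ and $\mathcal N_{d_1}(f)=a_{d_1}$ by (\ref{F3.7}); and for $m=3$, where $PE(\Lambda)=\{1,d_2,d_3,d_2d_3\}$, this is precisely the $d_1=1$ construction of Example \ref{EX3.1} with $(r_1,r_2,r_3,r_{23})=(a_1-1,a_{d_2},a_{d_3},a_{d_2d_3})$ (all positive, as required by admissibility).

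For $m\ge4$ I must show that $\Lambda$ is \emph{not} universal. I would take the admissible sequence $a_1=2$ and $a_q=1$ for every other $q\in PE(\Lambda)$, and argue by contradiction: a realizing $f\in\mathcal R_\Lambda$ would, by (\ref{F3.7}), have to satisfy $\pi_{p_{w(1)}\circ\tau f\circ i_{w(1)}}(0)=2$ and $\pi_{p_{w(d)}\circ\tau f\circ i_{w(d)}}(0)=\prod_{j\in S}(1+d_j)+1$ for every $d=\prod_{j\in S}d_j$ with $\emptyset\neq S\subseteq\{1,\dots,m-1\}$. Working downward from the top level $d=d_1\cdots d_{m-1}$ and applying Cronin's Lemma \ref{P2.6} level by level — using, at each step, the restrictions that resonance with respect to $\Lambda$ places on the monomials of $\tau f$ — one reaches a numerical incompatibility, exactly as in Gorbovickis' proof that the matrix of Theorem \ref{T1.3} is non‑universal for $m\ge3$; here the coordinate with trivial eigenvalue plays the role of an auxiliary dimension and accounts for the uniform ``$+1$''. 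This last step is where I expect the real work to be: in the reductions the trivial eigenvalue is merely carried along, whereas here one must rule out \emph{every} admissible choice of resonant higher‑order terms — most dangerously those involving the eigenvalue‑$1$ variable, which are resonant for every block and hence highly flexible — and it is precisely the \emph{strict} inequality in Cronin's Lemma \ref{P2.6} that forbids driving the top‑level index down to the prescribed value once the intermediate levels are fixed. A secondary, routine matter is to check, in the block reduction and in the $m\le3$ constructions, that the origin stays an isolated fixed point of all iterates, which follows from Theorem \ref{T3} together with the observation that the relevant leading forms have isolated zeros at the origin.
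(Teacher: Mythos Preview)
Your overall strategy is sound up to the crucial step, but the admissible sequence you pick for the case $m\ge4$ with one $d_j=1$ is \emph{realizable}, so it cannot witness non-universality. Concretely, take $m=4$, $d_4=1$, and (after your diagonal reduction) set
\[
\tau f=(\,x_1(x_1^{d_1}+x_4),\;x_2(x_2^{d_2}+x_4),\;x_3(x_3^{d_3}+x_4),\;x_1^{d_1}+x_2^{d_2}+x_3^{d_3}+x_4^{2}\,).
\]
This is resonant, the origin is isolated for every $\tau f|_{w(q)}$, and after the substitution $y_i=x_i^{d_i}$, $y_4=x_4$ the map $\tilde\tau f=(y_1+y_4,\,y_2+y_4,\,y_3+y_4,\,y_1+y_2+y_3+y_4^{2})$ and each of its coordinate restrictions has nonsingular Jacobian at $0$, so by (\ref{FT}) one gets $\mathcal N_1(f)=2$ and $\mathcal N_q(f)=1$ for every other $q\in PE(\Lambda)$---exactly your sequence. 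Thus the ``numerical incompatibility'' you anticipate from Cronin's lemma never materializes: the trivial-eigenvalue coordinate is flexible enough to let every $\mathcal N_q$ with $q\ge2$ drop to $1$ simultaneously.

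The paper's obstruction is of a different shape. With $d_1=1$ it shows that the conditions $\mathcal N_{d_2}(f)\ge2$, $\mathcal N_{d_3}(f)\ge2$, $\mathcal N_{d_4}(f)\ge2$ force at least one of $\mathcal N_{d_2d_3}(f)$, $\mathcal N_{d_2d_4}(f)$, $\mathcal N_{d_3d_4}(f)$ to be $\ge2$. The mechanism is a pigeonhole on the linear coefficients $a_{ij}$ of $g_{s_j}$: the three constraints $a_{12}a_{21}=a_{13}a_{31}=a_{14}a_{41}=0$ (coming from $\mathcal N_{d_j}\ge2$ via Cronin) guarantee that either two of $a_{12},a_{13},a_{14}$ vanish or two of $a_{21},a_{31},a_{41}$ vanish, and either event makes the relevant $2$-variable leading form degenerate, hence $\mathcal N_{d_id_j}\ge2$ by the strict inequality in Lemma~\ref{P2.6}. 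So the unrealizable admissible sequence has $a_{d_j}=2$ for the three singletons and $a_{d_id_j}=1$ for the three pairs; your minimal sequence does not touch this obstruction at all. (A minor point: your block reduction claims $\tau f_l=x_{l+1}$ exactly, whereas Proposition~\ref{p3.3} only gives $\tau f_l=x_{l+1}+\text{higher terms}$; this is harmless for the index computations via Lemma~\ref{P2.6}, but should be stated correctly.)
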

\begin{proof}
Let $f\in\mathcal{R}_{\Lambda}$ and $\tau f=(f_1,f_2,\cdots,f_n)$. By Proposition \ref{p3.3},
we can suppose that all variables of every monomial of $f_{s_1},\cdots,f_{s_m}$ come from
$x_{s_0+1},x_{s_1+1},\cdots,x_{s_{m-1}+1}$.

We first consider the case that $d_1,d_2,\cdots,d_m$ are greater than $1$. Since $d_1,d_2,\cdots,d_m$
are pairwise relatively prime, we have for every $j\in\{1,2,\cdots,m\}$,
\begin{equation*}
f_{s_j}(x_1,\cdots,x_n)=x_{s_{j-1}+1}(a_{j1}x_{s_0+1}^{d_1}+\cdots+a_{jm}x_{s_{m-1}+1}^{d_m}+P_j(x_{s_0+1}^{d_1},
\cdots,x_{s_{m-1}+1}^{d_m})),
\end{equation*}
where $a_{j1},\cdots,a_{jm}$ are complex numbers and $P_j$ is a polynomial in
$x_{s_0+1}^{d_1},\cdots,x_{s_{m-1}+1}^{d_m}$ without constant and linear terms.
Let $g=(g_1,g_2,\cdots,g_n)$ such that 
\begin{equation}
\label{F3.3}g_{s_j}(x_1,\cdots,x_n)=\frac{f_{s_j}(x_1,\cdots,x_n)}{x_{s_{j-1}+1}}=
a_{j1}x_{s_0+1}^{d_1}+\cdots+a_{jm}x_{s_{m-1}+1}^{d_m}+P_j(x_{s_0+1}^{d_1},
\cdots,x_{s_{m-1}+1}^{d_m})
\end{equation}
for $j=1,2,\cdots,m$ and
\begin{equation}
\label{F3.4}g_j(x_1,\cdots,x_n)=f_j(x_1,\cdots,x_n)=x_{j+1}+{\rm higher\ terms}
\end{equation}
for all $j\in\{1,2,\cdots,n\}\setminus\{s_1,\cdots,s_m\}$.
By (\ref{FT}), we have
\begin{equation}
\label{F3.2}\mathcal{N}_q(f)=\frac{\pi_{p_{w(q)}\comp g\comp i_{w(q)}}(0)}{q}
\end{equation}
for all $q\in PE(\Lambda)$.

We claim that $\mathcal{N}_{d_1}(f)>1$, $\mathcal{N}_{d_2}(f)>1$, $\mathcal{N}_{d_3}(f)>1$,
$\mathcal{N}_{d_1d_2}(f)=1$, $\mathcal{N}_{d_1d_3}(f)=1$ and $\mathcal{N}_{d_2d_3}(f)=2$ imply
$\mathcal{N}_{d_1d_2d_3}(f)=1$, as in the proof of [\cite{Gor}, Theorem 1.7].

Indeed, by (\ref{F3.2}), we have that $\mathcal{N}_{d_1}(f)>1$ implies
$\pi_{p_{w(d_1)}\comp g\comp i_{w(d_1)}}(0)>d_1$. Together with (\ref{F3.3}),(\ref{F3.4}) and
Lemma \ref{P2.6}, we get that $a_{11}=0$. Similarly, we have that $\mathcal{N}_{d_2}(f)>1$ implies $a_{22}=0$ and
$\mathcal{N}_{d_3}(f)>1$ implies $a_{33}=0$. By (\ref{F3.2}), we see that $\mathcal{N}_{d_1d_2}(f)=1$ implies
$\pi_{p_{w(d_1d_2)}\comp g\comp i_{w(d_1d_2)}}(0)=d_1d_2$. Then
$$\pi_{p_{w(d_1d_2)}\comp g\comp i_{w(d_1d_2)}\comp u}(0)=d_1^2d_2^2,$$
where $$u(x_1,\cdots,x_{s_1},x_{s_1+1},\cdots,x_{s_2})=(x_1^{d_2},x_2,\cdots,x_{s_1},x_{s_1+1}^{d_1},x_{s_1+2},
\cdots,x_{s_2}).$$
Together with (\ref{F3.3}), (\ref{F3.4}) and Lemma \ref{P2.6}, we get that $a_{11}a_{22}-a_{12}a_{21}\not=0$.
Again $a_{11}=a_{22}=0$, we have $a_{12}a_{21}\not=0$. Similarly, $\mathcal{N}_{d_1d_3}(f)=1$ implies
$a_{13}a_{31}\not=0$, and $\mathcal{N}_{d_2d_3}(f)=2$ implies $a_{23}\not=a_{32}=0$ or $a_{32}\not=a_{23}=0$.
According to the above results, by Lemma \ref{P2.6} we can get that
$$\pi_{p_{w(d_1d_2d_3)}\comp g\comp i_{w(d_1d_2d_3)}\comp uu}(0)=d_1^3d_2^3d_3^3,$$
where
$$uu(x_1,\cdots,x_{s_1},x_{s_1+1},\cdots,x_{s_2},x_{s_2+1},\cdots,x_{s_3})$$
$$=(x_1^{d_2d_3},x_2,\cdots,x_{s_1},x_{s_1+1}^{d_1d_3},x_{s_1+2},\cdots,x_{s_2},x_{s_2+1}^{d_1d_2},x_{s_2+2},
\cdots,x_{s_3}),$$
and hence $\mathcal{N}_{d_1d_2d_3}(f)=1$. Thus we complete the proof of the claim.

Next we assume that one of $d_1,d_2,\cdots,d_m$ is $1$. Without loss of generality, let $d_1=1$. Then we have
\begin{equation*}
f_{s_1}(x_1,\cdots,x_n)=a_{11}x_{s_0+1}^{d_1}+\cdots+a_{1m}x_{s_{m-1}+1}^{d_m}+P_1(x_{s_0+1}^{d_1},
\cdots,x_{s_{m-1}+1}^{d_m})
\end{equation*}
and
\begin{equation*}
f_{s_j}(x_1,\cdots,x_n)=x_{s_{j-1}+1}(a_{j1}x_{s_0+1}^{d_1}+\cdots+a_{jm}x_{s_{m-1}+1}^{d_m}+P_j(x_{s_0+1}^{d_1},
\cdots,x_{s_{m-1}+1}^{d_m}))
\end{equation*}
for $j=2,\cdots,m$, where each $a_{jk}$ is a complex number and each $P_j$ is a polynomial in
$x_{s_0+1}^{d_1},\cdots,x_{s_{m-1}+1}^{d_m}$ without constant and linear terms.
Let $g=(g_1,g_2,\cdots,g_n)$ such that

\begin{equation}
\label{F31}g_{s_1}(x_1,\cdots,x_n)=f_{s_1}(x_1,\cdots,x_n)=a_{11}x_{s_0+1}^{d_1}+\cdots+a_{1m}
x_{s_{m-1}+1}^{d_m}+P_1(x_{s_0+1}^{d_1},
\cdots,x_{s_{m-1}+1}^{d_m})
\end{equation}
and
\begin{equation}
\label{F33} g_{s_j}(x_1,\cdots,x_n)=\frac{f_{s_j}(x_1,\cdots,x_n)}{x_{s_{j-1}+1}}=a_{j1}x_{s_0+1}^{d_1}+
\cdots+a_{jm}x_{s_{m-1}+1}^{d_m}+P_j(x_{s_0+1}^{d_1},
\cdots,x_{s_{m-1}+1}^{d_m})
\end{equation}
for $j=2,\cdots,m$. Moreover,
\begin{equation}
\label{F34}g_j(x_1,\cdots,x_n)=f_j(x_1,\cdots,x_n)=x_{j+1}+{\rm higher\ terms}
\end{equation}
for all $j\in\{1,2,\cdots,n\}\setminus\{s_1,\cdots,s_m\}$.
By (\ref{FT}), we have
\begin{equation}
\label{F32}\mathcal{N}_l(f)=\frac{\pi_{p_{w(l)}\comp g\comp i_{w(l)}}(0)}{l}
\end{equation}
for all $l\in PE(\Lambda)$.

We claim that $\mathcal{N}_{d_2}(f)\geq2$, $\mathcal{N}_{d_3}(f)\geq2$ and $\mathcal{N}_{d_4}(f)\geq2$ imply
$\mathcal{N}_{d_2d_3}(f)\geq2$ or $\mathcal{N}_{d_2d_4}(f)\geq2$ or $\mathcal{N}_{d_3d_4}(f)\geq2$.
According to the definition of $g_{s_1}$, it is clear that $a_{11}=0$. Then by
(\ref{F31}),(\ref{F33}), (\ref{F34}),(\ref{F32}) and Lemma \ref{P2.6}, we can get that
\begin{itemize}
\item $\mathcal{N}_{d_2}(f)\geq2$ implies that $a_{12}a_{21}=0$;
\item $\mathcal{N}_{d_3}(f)\geq2$ implies that $a_{13}a_{31}=0$;
\item $\mathcal{N}_{d_4}(f)\geq2$ implies that $a_{14}a_{41}=0$.
\end{itemize}
Then it follows that at least two of $a_{12}$, $a_{13}$ and $a_{14}$ are $0$ or at least two of
$a_{21}$, $a_{31}$ and $a_{41}$ are $0$. Without loss of generality, assume that $a_{12}=a_{13}=0$.
Again since $a_{11}=0$, by Lemma \ref{P2.6} we obtain $\pi_{p_{w(d_2d_3)}\comp g\comp i_{w(d_2d_3)}}(0)>d_2d_3$.
Thus $\mathcal{N}_{d_2d_3}(f)\geq2$. This indicates that the above claim holds.

The two claims imply that the necessity of the corollary is true. On the other hand, when $m=1$, let
$$f(x_{s_0+1},\cdots,x_{s_1})=(\lambda_1x_{s_0+1}+x_{s_0+2},\cdots,\lambda_1x_{s_1-1}+x_{s_1},
\lambda_1x_{s_1}+x_{s_0+1}^{r_1d_1+1}).$$
Then $\mathcal{N}_{d_1}(f)=r_1+1$ if $d_1=1$ and $\mathcal{N}_{d_1}(f)=r_1$ otherwise,
which indicates that $\Lambda$ is universal. When $m=2$, or when $m=3$ and
one of $d_1$, $d_2$ and $d_3$ is $1$, Example \ref{EX3.1} indicates that $\Lambda$ is universal.
Thus $\Lambda$ is universal if and only if $m\leq3$ and one of $d_1$, $d_2$ and $d_3$ is $1$ if $m=3$.
\end{proof}

\section{Monotonicity\label{Sec4}}
In this section we give the monotonicity of the numbers of periodic orbits of holomorphic germs
hidden at the origin with respect to their linearization matrices.
The monotonicity reveals the influence of the parts of linearization matrices to themselves
in terms of the numbers of periodic orbits hidden at the origin.

Let $JM_n$ denote all $n\times n$ complex Jordan matrices with each eigenvalue a root of unity and $JM=\cup_{n=1}^{\infty}JM_n$.
We define a partial order relation $\leq$ on $JM$ as follows. Let $\Lambda,\Lambda'\in JM$. Without loss of generality, we let $\Lambda$ be of the form (\ref{F1.1}), we say that $\Lambda'\leq\Lambda$ if
\begin{equation*}
\Lambda'={\rm diag}(A'_{k'_1},\cdots,A'_{k'_m}),
\end{equation*}
where $$A'_{k'_j}=\left(\begin{matrix}\lambda_j&1\\&\lambda_j&1\\&&\ddots&\ddots\\&&&\lambda_j&1\\&&&&\lambda_j
\end{matrix}
\right)_{k'_j\times k'_j},$$
$$1\leq j\leq m, k'_1+\cdots+k'_m=k',\ 0\leq k'_j\leq k_j\ {\rm and}\ 1\leq k'\leq k.$$
Let
$$\rho_q(\Lambda):=\inf\{\mathcal{N}_q(f): f\in\mathcal{O}_{\Lambda}\}$$
for any $\Lambda\in JM$ and any positive integer $q$. Then for the partial order relation given above,
we have the following theorem.
\begin{theorem}
Let $\Lambda, \Lambda'\in JM$ and $f\in\mathcal{R}_{\Lambda}$. If $\Lambda'\leq\Lambda$, then
\begin{enumerate}
\item there exists $\tilde f\in\mathcal{R}_{\Lambda'}$ such that $\mathcal{N}_q(f)\geq\mathcal{N}_q(\tilde f)$
for all $q\geq1$;
\item $\rho_q(\Lambda)\geq\rho_q(\Lambda')$ for all $q\geq 1$.
\end{enumerate}
\label{T2}
\end{theorem}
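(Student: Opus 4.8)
The plan is to prove the two assertions of Theorem \ref{T2} together, deducing (2) from (1), and to establish (1) by an explicit construction that "degenerates" the germ $f\in\mathcal{R}_\Lambda$ along the partial order $\Lambda'\le\Lambda$. Since $\Lambda'\le\Lambda$ is generated by finitely many elementary steps in which a single Jordan block $A_{k_j}$ is shrunk to $A_{k_j-1}$ (deleting one column/row), it suffices to treat one such step and iterate. So I would assume $\Lambda'$ differs from $\Lambda$ only in that the block $A_{k_j}$ is replaced by $A_{k_j-1}$, with $k_j\ge 1$ (the case $k_j=1$, i.e.\ deleting a whole block, being handled slightly differently but in the same spirit). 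After relabelling I may assume the shrunk block sits in the last coordinates, so that the deleted coordinate is $x_n=x_{s_m}$ and the relevant eigenvalue is $\lambda_m$ with order $d_m=d(n)$.

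The core of the argument is to build $\tilde f\in\mathcal{R}_{\Lambda'}$ from $f\in\mathcal{R}_\Lambda$ so that the relevant zero indices only drop. Write $\tau f=(f_1,\dots,f_n)$. The idea is to substitute $x_n=0$ into the components $f_1,\dots,f_{n-1}$ and discard the $n$-th component, but one must be careful: this naive substitution can destroy isolatedness or change the combinatorial structure $w(\Lambda,l)$. Instead I would use the resonance structure. Recall $w(\Lambda,l)$ records which coordinates "survive" at period $l$; when we delete the coordinate $n$, for every $l$ divisible by $d_m$ the word $w(\Lambda',l)$ is exactly $w(\Lambda,l)$ with the $n$-th letter removed, and for $l$ not divisible by $d_m$ the coordinate $n$ was already absent, so $w(\Lambda',l)=w(\Lambda,l)$ with a $0$ removed. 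The key point is that $p_{w(\Lambda,l)}\comp\tau f\comp i_{w(\Lambda,l)}$, restricted by setting the $n$-th variable to zero, is algebraically equivalent (Lemma \ref{PR1}) or reduces by Lemma \ref{P1}/Lemma \ref{P2.6} to $p_{w(\Lambda',l)}\comp\tau\tilde f\comp i_{w(\Lambda',l)}$ up to a factor, and that factor is a zero index $\ge 1$. Concretely, the last coordinate $x_n$ of a Jordan block contributes, in $\tau f$, a component $f_{n-1}=x_n+(\text{higher})$ feeding the chain; collapsing the block amounts to deleting the variable $x_n$ and the "slot" it occupied, which by Cronin's lemma (Lemma \ref{P2.6}) and the product formula (Lemma \ref{P4}, Lemma \ref{P1}) can only decrease the multiplicity. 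Using Theorem \ref{T3} and the linear system \eqref{F3.7}, a decrease of $\pi_{p_{w(d)}\comp\tau f\comp i_{w(d)}}(0)$ for the relevant $d$, compatibly across all $d\in PE(\Lambda)$, translates into $\mathcal{N}_q(f)\ge\mathcal{N}_q(\tilde f)$ for all $q$; for $q\notin PE(\Lambda')$ both sides are the forced admissible value and there is nothing to prove.

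A cleaner route, which I would actually pursue, is to avoid solving the linear system directly and instead argue monotonicity at the level of fixed point indices via Dold's formula $P_q(f,0)=\sum_{s\subset P(q)}(-1)^{\#s}\mu_{f^{q:s}}(0)$: if I can produce $\tilde f$ with $\mu_{\tilde f^{\,k}}(0)\le\mu_{f^k}(0)$ for every $k$ \emph{and} such that the differences behave multiplicatively/consistently so that the alternating sums still compare the right way, I am done. But the alternating sign is exactly why termwise comparison of $\mu$ does not immediately give comparison of $P_q$; this is the main obstacle. To get around it I would instead show the stronger statement that for every word $w$ one has $\pi_{p_{w}\comp\tau\tilde f\comp i_{w}}(0)\le \pi_{p_{w'}\comp\tau f\comp i_{w'}}(0)$ where $w'$ is $w$ with the deleted letter reinserted, with equality-type control coming from Lemma \ref{P5} (one may choose the higher-order terms of $\tilde f$ freely without changing low-degree indices), and then feed this into \eqref{F3.7}: because \eqref{F3.7} expresses $q\mathcal{N}_q$ as a \emph{Möbius-type} combination of the $\pi$'s over divisors, and because the $\pi$'s for $\Lambda'$ are dominated termwise by those for $\Lambda$ in a way compatible with the divisor lattice, the resulting $\mathcal{N}_q(\tilde f)\le\mathcal{N}_q(f)$. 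The technical heart is verifying this compatible domination, i.e.\ choosing $\tilde f$ so that each relevant zero index drops by exactly a controlled multiplicative factor; here Proposition \ref{p3.3} lets me first normalize $f$ so that $f_{s_1},\dots,f_{s_m}$ involve only the variables $x_{s_0+1},\dots,x_{s_{m-1}+1}$, after which deleting a trailing coordinate of a block is transparent because that coordinate never appears in $f_{s_1},\dots,f_{s_m}$ at all — it only appears as $x_n$ in $f_{n-1}=x_n+\cdots$, so setting it to zero and dropping $f_{n-1}$'s slot is a clean "delete one factor of multiplicity $1$" operation governed by Lemma \ref{P1}. Once (1) is in hand, (2) is immediate: for any $f\in\mathcal{O}_\Lambda$ first pass to a resonant normal form with the same $\mathcal{N}_q$ by Theorem \ref{T3.1}, apply (1) to get $\tilde f\in\mathcal{R}_{\Lambda'}\subset\mathcal{O}_{\Lambda'}$ with $\mathcal{N}_q(\tilde f)\le\mathcal{N}_q(f)$, hence $\rho_q(\Lambda')\le\mathcal{N}_q(f)$; taking the infimum over $f\in\mathcal{O}_\Lambda$ gives $\rho_q(\Lambda')\le\rho_q(\Lambda)$.
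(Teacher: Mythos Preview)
Your proposal has a genuine gap at exactly the point you yourself flag as ``the main obstacle'': passing from termwise domination of the zero indices $\pi_{p_{w(d)}\circ\tau(\cdot)\circ i_{w(d)}}(0)$ to the inequality $\mathcal{N}_q(\tilde f)\le\mathcal{N}_q(f)$. Equation \eqref{F3.7} inverts to a M\"obius-type alternating sum, so knowing $\pi'_d\le\pi_d$ for every $d$ does \emph{not} yield $\mathcal{N}_q'\le\mathcal{N}_q$; you would need the drops $\pi_d-\pi'_d$ to satisfy a very specific compatibility across the divisor lattice, and your phrase ``compatible domination \dots\ controlled multiplicative factor'' is not made precise. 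Nothing in Lemma~\ref{P1}, Lemma~\ref{P2.6}, or Lemma~\ref{P5} produces that compatibility. There is also a secondary inaccuracy: after Proposition~\ref{p3.3}, the variable $x_n=x_{s_m}$ is absent from $f_{s_1},\dots,f_{s_m}$, but it can still appear in the higher-order terms of $f_j$ for $j\notin\{s_1,\dots,s_m\}$, so the ``delete one factor of multiplicity~$1$'' picture is not as clean as you suggest.

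The paper avoids this obstruction entirely by a different idea: it does not compare zero indices at all. Instead it perturbs $\Lambda$ to a nearby matrix $\Lambda''$ in which the coordinates to be discarded receive eigenvalues that are \emph{not} roots of unity, and sets $g(\mathbf{x})=\tilde{\Lambda''}\mathbf{x}+\tau f(\mathbf{x})$. The inequality $\mathcal{N}_q(f)\ge\mathcal{N}_q(g)$ then comes directly from the \emph{definition} of $\mathcal{N}_q$ as (essentially) a limsup over small perturbations---no M\"obius inversion needed. Lemma~\ref{P8} (Shub--Sullivan) guarantees $g\in\mathcal{O}_{\Lambda''}$; a linear conjugation and Theorem~\ref{T3.1} pass to a resonant normal form $\tilde g\in\mathcal{R}_{\Lambda'''}$ with the same $\mathcal{N}_q$; finally Theorem~\ref{T3} shows that projecting out the non-root-of-unity coordinates gives $\tilde f\in\mathcal{R}_{\Lambda'}$ with $\mathcal{N}_q(\tilde f)=\mathcal{N}_q(\tilde g)$. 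Your derivation of (2) from (1) via Theorem~\ref{T3.1} is fine; the issue is entirely in (1).
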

\begin{proof}
Let $\Lambda$ be of the form (\ref{F1.1}). By the definition of $\Lambda'\leq\Lambda$, we have that
\begin{equation*}
\Lambda'={\rm diag}(A'_{k'_1},\cdots,A'_{k'_m}),
\end{equation*}
where $$A'_{k'_j}=\left(\begin{matrix}\lambda_j&1\\&\lambda_j&1\\&&\ddots&\ddots\\&&&\lambda_j&1\\&&&&\lambda_j
\end{matrix}
\right)_{k'_j\times k'_j},$$
$$1\leq j\leq m,\ k'_1+\cdots+k'_m=k',\ 0\leq k'_j\leq k_j\ {\rm and}\ 1\leq k'\leq k.$$

We take a small perturbation $\Lambda''$ of $\Lambda$ such that
\begin{equation*}
\Lambda''={\rm diag}(A_{k_1}'',\cdots,A_{k_m}''),
\end{equation*}
where for every $1\leq j\leq m$,
$$A_{k_j}''=\left(\begin{matrix}\lambda^{(1)}_j&1\\&\lambda^{(2)}_j&1\\&&\ddots&\ddots\\&&&\lambda^{(k_j-1)}_j&1
\\&&&&\lambda^{(k_j)}_j\end{matrix}
\right)_{k_j\times k_j}$$
satisfies
\begin{itemize}
\item $\lambda^{(t)}_j=\lambda_j$ for $1\leq t\leq k'_j$;
\item $\lambda^{(k'_j+1)}_j,\cdots,\lambda^{(k_j)}_j$ are pairwise different and none of them is a root of unity;
\item The origin is an isolated fixed point of $g^{M(\Lambda)}$ for $g(\bold{x})=\tilde{\Lambda''}\bold{x}+\tau f(\bold{x})$;
\item $\mathcal{N}_{q}(f)\geq \mathcal{N}_{q}(g)$ for $q\in PE(\Lambda)$.(Thanks to the definition of
$\mathcal{N}_{q}(f)$)
\end{itemize}
By Lemma \ref{P8}, it is easy to see that
\begin{equation}
\label{F4.1}g\in\mathcal{O}_{\Lambda''}.
\end{equation}
Indeed, for any $l\geq1$, since the origin is an isolated fixed point of $g^{M(\Lambda)}$,
by Lemma \ref{P8} it is an isolated fixed point of $(g^{M(\Lambda)})^l$. Again
$(g^l)^{M(\Lambda)}=(g^{M(\Lambda)})^l$,
thus the origin is an isolated fixed point of $g^l$.

Let
\begin{equation*}
\Lambda'''={\rm diag}( A'''_{k_1},\cdots, A'''_{k_m})
\end{equation*}
with for every $1\leq j\leq m$,
$$A'''_{k_j}=\left(\begin{matrix}\lambda^{(1)}_j&1\\&\lambda^{(2)}_j&1
\\&&\ddots&\ddots\\&&&\lambda^{(k'_j-1)}_j&1\\&&&&\lambda^{(k'_j)}_j\\&&&&&\lambda^{(k'_j+1)}_j\\&&&&&&\ddots\\
&&&&&&&\lambda^{(k_j)}_j\end{matrix}\right)_{k_j\times k_j}.$$
It is easy to see that there exists an invertible $n\times n$ matrix $X$ such that
$\Lambda'''=X^{-1}\Lambda'' X.$
Let $h(\bold{x})=X\bold{x}$. Then by (\ref{F4.1}), we have $h^{-1}\comp g\comp h\in\mathcal{O}_{\Lambda'''}$.
By Theorem \ref{T3.1} and Lemma \ref{P4}, there exists $\tilde g\in\mathcal{R}_{\Lambda'''}$ such that
$\mathcal{N}_q(\tilde g)=\mathcal{N}_q(g)$ for all $q\geq1$.

Let $w(\Lambda')=(w_1w_2\cdots w_n)\in W_n^*$ such that
$w_j=1$ if and only if $j-s_{\theta(j)-1}\leq k_{\theta(j)}'$(Note that both $\theta(j)$ and $s_{\theta(j)-1}$ correspond to the matrix $\Lambda$
with the form (\ref{F1.1})). Let
$\tilde f=p_{w(\Lambda')}\comp\tilde g\comp i_{w(\Lambda')}$. Then
$\tilde f(\bold{x})=\tilde{\Lambda'}\bold{x}+p_{w(\Lambda')}\comp\tau\tilde g\comp i_{w(\Lambda')}(\bold{x})$.
By Theorem \ref{T3}, we have $\tilde f\in\mathcal{R}_{\Lambda'}$ and
$\mathcal{N}_{q}(\tilde g)=\mathcal{N}_{q}(\tilde f)$ for all $q\geq1$. Thus
$$\mathcal{N}_{q}(f)\geq \mathcal{N}_{q}(g)=\mathcal{N}_{q}(\tilde g)=\mathcal{N}_{q}(\tilde f)$$
for $q\in PE(\Lambda)$. Together with Theorem \ref{T1.2}, we can get that (1) holds.
At last, (2) follows easily from (1), and hence we complete the proof.
\end{proof}
\section{The proof of Theorem \ref{T1}\label{s5}}
To prove Theorem \ref{T1}, we first discuss five cases about $\Lambda$.
\subsection{Case $\uppercase\expandafter{\romannumeral1}$}
Let $\Lambda$ be an $n\times n$ matrix with the form (\ref{F1.1}) such that
$k_j=1$, $d_j>1$ and $d_j\nmid [d_1,\cdots,d_{j-1},d_{j+1},\cdots,d_m]$
for $j=1,2,\cdots,m$. Then it is clear that $m=n$. Furthermore, in this case, we have the following proposition.

\begin{proposition}
$\rho_{M(\Lambda)}(\Lambda)=1$ if and only if $d_1,\cdots,d_m$ are pairwise relatively prime.\label{p5.1}
\end{proposition}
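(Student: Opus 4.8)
The plan is to exploit the reduction formula (\ref{FT}) which, in this setting, is available because $k_j = 1$ and the hypothesis $d_j \nmid [d_1,\cdots,d_{j-1},d_{j+1},\cdots,d_m]$ is exactly condition (\ref{F3.1}) with $t = m$ and $\{j_1,\cdots,j_t\} = \{1,\cdots,m\}$. Thus for any $f \in \mathcal{R}_{\Lambda}$, writing $\tau f = (f_1,\cdots,f_n)$ and applying Proposition \ref{p3.3} to assume every monomial of each $f_{s_j} = f_j$ uses only the variables $x_1,\cdots,x_m$, we obtain $f_j = x_j \, h_j(x_1^{d_1},\cdots,x_m^{d_m})$ for some holomorphic $h_j$ vanishing nowhere forced, and the divided germ $\tilde\tau f = (h_1,\cdots,h_m)$ satisfies $\mathcal{N}_{M(\Lambda)}(f) = \pi_{\tilde\tau f}(0)/M(\Lambda)$. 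So the statement reduces to: the infimum of $\pi_{\tilde\tau f}(0)$ over all admissible $f$ equals $M(\Lambda)$ precisely when the $d_j$ are pairwise coprime.

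For the direction "pairwise coprime $\Rightarrow \rho_{M(\Lambda)}(\Lambda) = 1$", I would construct an explicit germ. A natural candidate generalizes the one in Example \ref{EX3.1} or the end of Proposition \ref{Cr3.2}: take $f_j = \lambda_j x_j + x_{j+1}$ off the diagonal positions (vacuous here since $k_j=1$), and at the single position per block set something like $f_1 = \lambda_1 x_1 + x_1 x_2^{d_2}\cdots x_m^{d_m}$-type terms, or more simply use a cyclic/triangular choice so that $\tilde\tau f$ is algebraically equivalent to a coordinate permutation composed with diagonal monomials $x_j \mapsto x_{\sigma(j)}^{e_j}$ of total multiplicity $\prod_j d_j / \text{(something)}$. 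I would verify via Lemma \ref{P2.6} (Cronin) that the lowest-degree system has $0$ as an isolated solution and multiplicities multiply out to exactly $M(\Lambda) = d_1\cdots d_m$, and via Theorem \ref{T1.2} that the origin is an isolated fixed point of all iterates, giving $\mathcal{N}_{M(\Lambda)}(f) = 1$. One must also double-check that $f$ actually lies in $\mathcal{O}_{\Lambda}$ (isolated fixed point of every iterate), which follows from Theorem \ref{T3} once the relevant reduced germ $p_{w(\Lambda)}\comp\tau f\comp i_{w(\Lambda)}$ is seen to have isolated zero at the origin.

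For the converse "not pairwise coprime $\Rightarrow \rho_{M(\Lambda)}(\Lambda) \geq 2$", suppose some pair $d_i, d_j$ shares a common prime $p$. The idea is that the hypotheses $d_i \nmid [\cdots \widehat{d_i} \cdots]$ and $d_j \nmid [\cdots \widehat{d_j}\cdots]$ force $d_i$ and $d_j$ to each carry a strictly higher power of $p$ than all the others in a way that blocks the lowest-order terms of $f_{s_i}$ and $f_{s_j}$ from being "generic". Concretely, using Proposition \ref{p3.3} one shows the relevant $a_{ii}$ or $a_{jj}$ coefficient (the coefficient of $x_{s_{i-1}+1} x_{s_i+1}^{d_i}$, resp.) must vanish whenever $\mathcal{N}_{M(\Lambda)}(f)$ is to stay small — actually one wants a cleaner statement: the divided germ $\tilde\tau f$ restricted to the relevant coordinate subspace has, by Lemma \ref{P2.6}, zero order strictly exceeding the product of leading degrees unless certain Jacobian-type nondegeneracy holds, and the shared factor $p$ makes the necessary nondegeneracy impossible. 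I would then bootstrap: any $f$ with $\mathcal{N}_{M(\Lambda)}(f) = 1$ would, by (\ref{F3.2})-type formulas on sub-blocks $p_{w(q)}\comp g\comp i_{w(q)}$, force a chain of coefficient vanishings that ultimately contradicts $0$ being an isolated solution of the leading system, i.e. contradicts $f \in \mathcal{O}_{\Lambda}$. The main obstacle I anticipate is precisely this combinatorial bookkeeping — tracking which monomials of which degree survive in the divided germ $\tilde\tau f$ when $p \mid (d_i, d_j)$, and pinning down the exact algebraic identity among the $a_{jk}$ that must fail. This is where the number-theoretic content (comparing $p$-adic valuations of the $d_j$ under the "not dividing the lcm of the rest" hypothesis) meets Cronin's theorem, and getting the quantifiers right so the argument yields $\rho_{M(\Lambda)}(\Lambda) \geq 2$ rather than merely $\mathcal{N}_{M(\Lambda)}(f) \geq 2$ for the specific $f$ at hand will require care.
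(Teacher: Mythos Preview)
Your reduction via Proposition \ref{L2} to $\mathcal{N}_{M(\Lambda)}(f)=\pi_{\tilde\tau f}(0)/M(\Lambda)$ is correct, and the easy direction is fine: the paper simply takes the divided germ $g=(x_1^{d_1},\ldots,x_n^{d_n})$, whence $\pi_g(0)=d_1\cdots d_n=M(\Lambda)$ when the $d_j$ are pairwise coprime.

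However, two things go wrong in your hard direction. First, the assertion $f_j=x_j\,h_j(x_1^{d_1},\ldots,x_m^{d_m})$ is false in the non-coprime case. The invariance $g_j\circ\tilde\Lambda=g_j$ (with $g_j=f_j/x_j$) only forces $\prod_k\lambda_k^{i_k}=1$ for each monomial, and when $(d_i,d_j)>1$ there exist genuinely mixed invariant monomials not expressible in the variables $x_k^{d_k}$. These extra monomials are precisely what your coefficient-tracking scheme has no mechanism to handle. Second, your $p$-adic claim is internally inconsistent: you cannot have both $d_i$ and $d_j$ carry a strictly higher power of the \emph{same} prime $p$ than all the others (including each other). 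The hypothesis $d_j\nmid[d_1,\ldots,\widehat{d_j},\ldots,d_m]$ only guarantees that for each $j$ there is \emph{some} prime, depending on $j$, at which $d_j$ dominates; a common factor of $d_i$ and $d_j$ need not be that prime for either index. So the mechanism you sketch for forcing coefficient vanishings does not get off the ground.

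The paper's actual argument is completely different and rests on a standalone number-theoretic lemma (Lemma \ref{L1}): writing $\lambda_j=e^{2\pi i r_j/d_j}$, one can find an integer $k$ such that the residues $r_j^{(k)}\equiv kr_j\pmod{d_j}$, $1\le r_j^{(k)}<d_j$, satisfy
\[
\prod_{j=1}^n r_j^{(k)}\;<\;\frac{d_1\cdots d_n}{[d_1,\ldots,d_n]}
\]
strictly whenever the $d_j$ are not pairwise coprime. Since $g\circ\tilde\Lambda^k=g$, after the substitution $x_j=y_j^{\,r_j^{(k)}M(\Lambda)/d_j}$ every monomial of every $g_j$ has total degree a multiple of $M(\Lambda)$; Cronin's lemma then gives zero order at least $M(\Lambda)^n$ for the substituted germ, and multiplicativity under composition (Lemma \ref{P4}) yields
\[
\pi_g(0)\;\ge\;\frac{M(\Lambda)^n}{\prod_j r_j^{(k)}M(\Lambda)/d_j}\;=\;\frac{d_1\cdots d_n}{\prod_j r_j^{(k)}}\;>\;[d_1,\ldots,d_n]\;=\;M(\Lambda),
\]
hence $\mathcal{N}_{M(\Lambda)}(f)\ge 2$ for every $f\in\mathcal{R}_\Lambda$. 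This substitution-and-bound step, driven by Lemma \ref{L1}, is the entire content of the hard direction; your proposal contains no analogue of it.
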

We use the following lemma to prove Proposition \ref{p5.1}.
\begin{lemma}
\label{L1}Let $a_1,a_2,\cdots,a_n,r_1,r_2,\cdots,r_n$ be positive integers such that
 $1\leq r_j<a_j$ and $(r_j,a_j)=1$ for $j=1,2,\cdots,n$. Then there exists an integer $k_n$ such that
\begin{equation}
\label{F6}r_1^{(k_n)}r_2^{(k_n)}\cdots r_n^{(k_n)}\leq\frac{a_1a_2\cdots a_n}{[a_1,a_2,\cdots,a_n]},
\end{equation}
where $r_j^{(k_n)}$ is an integer and is determined uniquely by
$r_j^{(k_n)}= k_nr_j\ mod\ (a_j)$ and $1\leq r_j^{(k_n)}<a_j$ for $j=1,2,\cdots,n$. Moreover,
the above inequality {\rm(\ref{F6})} can be replaced by strict inequality if and only if
$a_1,a_2,\cdots,a_n$ are not pairwise relatively prime.
\end{lemma}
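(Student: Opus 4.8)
The plan is to reduce the inequality to a counting statement about residues and then exhibit a suitable $k_n$ by a pigeonhole / averaging argument. First I would reformulate the problem: for each $j$, as $k$ ranges over a complete residue system modulo $a_j$, the value $r_j^{(k)}=kr_j \bmod a_j$ (normalized to lie in $\{1,\dots,a_j-1\}$) also ranges over $\{1,\dots,a_j-1\}$, since $(r_j,a_j)=1$. So the "size" of $r_j^{(k)}$ is, on average over $k\bmod a_j$, equal to $a_j/2$ roughly; more usefully, $r_j^{(k)}$ is small for a positive proportion of residues. The key point is to make all the $r_j^{(k_n)}$ simultaneously small for a single $k_n$, and here the quantity $a_1\cdots a_n/[a_1,\dots,a_n]$ enters as precisely the obstruction coming from the overlaps among the moduli $a_j$ (it equals $1$ exactly when the $a_j$ are pairwise coprime).

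The main step is an induction on $n$. For $n=1$: choosing $k_1$ with $k_1 r_1\equiv 1 \pmod{a_1}$ gives $r_1^{(k_1)}=1\le a_1/a_1=1$, with equality, matching the "not pairwise relatively prime" clause being vacuous. For the inductive step, suppose $k_{n-1}$ works for $a_1,\dots,a_{n-1}$, giving $\prod_{j<n} r_j^{(k_{n-1})}\le \frac{a_1\cdots a_{n-1}}{[a_1,\dots,a_{n-1}]}=:P$. I want to modify $k_{n-1}$ to control the $n$-th residue without destroying the first $n-1$. Write $a_n' = a_n/\gcd(a_n,[a_1,\dots,a_{n-1}])$, so that $[a_1,\dots,a_n]=[a_1,\dots,a_{n-1}]\cdot a_n'$ and $\frac{a_1\cdots a_n}{[a_1,\dots,a_n]}=P\cdot\frac{a_n}{a_n'}$. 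The residues $k$ that preserve the values $r_1^{(k)},\dots,r_{n-1}^{(k)}$ are those with $k\equiv k_{n-1}\pmod{[a_1,\dots,a_{n-1}]}$; as $k$ runs over such residues modulo $[a_1,\dots,a_n]$, the image $k r_n \bmod a_n$ runs over a coset of the subgroup generated by $[a_1,\dots,a_{n-1}]r_n$ in $\mathbb{Z}/a_n\mathbb{Z}$, which has exactly $a_n/\gcd(a_n,[a_1,\dots,a_{n-1}]) = a_n'$ elements, equally spaced with gap $a_n/a_n'$. Hence among these I can pick $k_n$ with $r_n^{(k_n)}\le a_n/a_n'$ (the smallest nonzero element of that coset, or handle the coset-of-$0$ case separately since $(r_n,a_n)=1$ forces the relevant element to be nonzero). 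Then $\prod_{j=1}^n r_j^{(k_n)}\le P\cdot\frac{a_n}{a_n'}=\frac{a_1\cdots a_n}{[a_1,\dots,a_n]}$, as desired.

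For the "moreover" clause: if the $a_j$ are pairwise relatively prime then $\frac{a_1\cdots a_n}{[a_1,\dots,a_n]}=1$, and since each $r_j^{(k_n)}\ge 1$ the product is exactly $1$, so strict inequality is impossible; conversely if some pair $a_i,a_j$ share a common factor, then at the corresponding stage of the induction $\frac{a_n}{a_n'}>1$ (or more carefully: at least one step contributes a factor strictly larger than $1$ while the smallest nonzero coset element can be chosen strictly below that bound, or one simply has slack because $r_j^{(k_n)}$ can be taken to be $1$ at two coprime-to-each-other stages), giving room to make the inequality strict; I would make this precise by choosing, at the step where an overlap first occurs, the coset element to be $1$ if $1$ lies in the coset (it does iff the coset is the one through $k$ with $kr_n\equiv 1$, which can be arranged when there is enough freedom), producing a strict gain. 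The hard part will be this last refinement — pinning down exactly when one can force a strict inequality and showing the overlap condition is both necessary and sufficient — since it requires tracking the induction carefully rather than just the final bound; the bound itself follows cleanly from the coset/pigeonhole argument above.
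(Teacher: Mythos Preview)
Your inductive scheme---freeze the first $n-1$ residues by keeping $k\equiv k_{n-1}\pmod{[a_1,\dots,a_{n-1}]}$, then pick $k_n$ in that class to make $r_n^{(k_n)}\le \gcd(a_n,[a_1,\dots,a_{n-1}])=a_n/a_n'$---is exactly the argument the paper runs (the paper writes the same step via an explicit B\'ezout identity rather than in coset language). So the strategy is correct; the problem is one boundary case you wave past.

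The sentence ``$(r_n,a_n)=1$ forces the relevant element to be nonzero'' does not dispose of the zero case. The dangerous situation is $a_n\mid[a_1,\dots,a_{n-1}]$ (so $a_n'=1$ and the coset is a single residue); then $r_n^{(k_n)}$ fails to exist precisely when $a_n\mid k_{n-1}$, and coprimality of $r_n$ with $a_n$ says nothing about this. A concrete failure of your recursion: take $(a_1,a_2,a_3)=(15,12,2)$ with $r_1=1$, $r_2=5$, $r_3=1$. Step~1 gives $k_1\equiv 1\pmod{15}$; at step~2 the coset of $k_1r_2$ modulo the subgroup $\langle 3\rangle\subset\mathbb{Z}/12\mathbb{Z}$ is $\{2,5,8,11\}$, whose smallest element $2$ forces $k_2\equiv 46\pmod{60}$, hence $k_2$ is even; at step~3 the single available residue is $0$ and the construction halts. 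The paper repairs this by observing that its recursion preserves $(k_m,a_1)=1$ (since each $k_{m+1}\equiv k_m\pmod{a_1}$), and that after a harmless reordering one may take $a_1$ to share a prime with $a_n$ whenever $a_n\mid[a_1,\dots,a_{n-1}]$; then $(k_{n-1},a_1)=1$ forces $a_n\nmid k_{n-1}$. You need to carry some such coprimality hypothesis through the induction.

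Your treatment of the strict-inequality clause is also too loose: the suggestion to ``choose the coset element to be $1$ if $1$ lies in the coset'' is not what produces the slack. The clean route (and the paper's) is to reorder so that a non-coprime pair already sits among $a_1,\dots,a_{n-1}$, obtain strict inequality there by induction, and note that the final step multiplies both sides by at most $a_n/a_n'$, preserving strictness; the base $n=2$ works because $(k_1,a_1)=1$ keeps the coset off $0$ when $\gcd(a_1,a_2)>1$, giving $r_2^{(k_2)}<\gcd(a_1,a_2)$. Your necessity direction (pairwise coprime $\Rightarrow$ equality) is fine.
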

\begin{proof}

Let us prove the first part and the sufficiency of the second part in this lemma by mathematical induction.

When $n=1$, since $(r_1,a_1)=1$, there exist integers $l_1$ and $l_1'$ such that
\begin{equation}
\label{F5.1}l_1r_1+l_1'a_1=1.
\end{equation}
We take $k_1=l_1$. Then $r_1^{(k_1)}=1$. Thus $r_1^{(k_1)}=1\leq \frac{a_1}{[a_1]}$.

When $n=2$, since $(r_2,a_2)=1$, we have $(a_1r_2,a_2)=(a_1,a_2)$. Then there exist integers $l_2$ and $l_2'$
such that
\begin{equation}
a_1r_2l_2+l_2'a_2=(a_1,a_2).\label{F1}
\end{equation}
By (\ref{F5.1}), we see that $(l_1,a_1)=1$. Together with $(r_2,a_2)=1$, we have
\begin{equation}
\label{F5.2}(r_2l_1,(a_1,a_2))=1.
\end{equation}
If $(a_1,a_2)>1$, then, by (\ref{F5.2}), there exist integers $q$ and $r$ such that
\begin{equation}
\label{F5.3}r_2l_1=q(a_1,a_2)+r,
\end{equation}
where $1\leq r<(a_1,a_2)$. We take $k_2=l_1-l_2qa_1$. First, it is easy to see that $r_1^{(k_2)}=1$. Again,
by (\ref{F1}) and (\ref{F5.3}), it is easy to check $r_2^{(k_2)}=r$.
Thus
$$r_1^{(k_2)}r_2^{(k_2)}=r<(a_1,a_2)=\frac{a_1a_2}{[a_1,a_2]}.$$
If $(a_1,a_2)=1$, we take $k_2=l_1+(1-l_1r_2)l_2a_1$. By (\ref{F1}), we have $r_2^{(k_2)}=1$.
Again, it is easy to see that $r_1^{(k_2)}=1$. Thus
$$r_1^{(k_2)}r_2^{(k_2)}=1=\frac{a_1a_2}{[a_1,a_2]}.$$

We suppose that when $n=m\geq 2$, this conclusion holds. Now we need to consider the case that $n=m+1$.
If $a_1,a_2,\cdots,a_{m+1}$ are not pairwise relatively prime, we may change the order of $a_1,a_2,\cdots,a_{m+1}$,
if necessary,
such that $a_1,\cdots,a_m$ are not pairwise relatively prime. Since $(r_{m+1},a_{m+1})=1$,
we have $([a_1,\cdots,a_m]r_{m+1},a_{m+1})=([a_1,\cdots,a_m],a_{m+1})$.
Then there exist integers $l_{m+1}$ and $l_{m+1}'$ such that
\begin{equation}
l_{m+1}[a_1,\cdots,a_m]r_{m+1}+l_{m+1}'a_{m+1}=([a_1,\cdots,a_m],a_{m+1}).\label{F2}
\end{equation}
Dividing $r_{m+1}k_m$ by $([a_1,\cdots,a_m],a_{m+1})$, we get that there exist integers $q'$ and $r'$ such that
\begin{equation}
r_{m+1}k_m=q'([a_1,\cdots,a_m],a_{m+1})+r',\label{F3}
\end{equation}
here $1\leq r'<([a_1,\cdots,a_m],a_{m+1})$ if $([a_1,\cdots,a_m],a_{m+1})\nmid r_{m+1}k_m$
and $r'=([a_1,\cdots,a_m],a_{m+1})$  otherwise. We take
\begin{equation}
\label{F5.20}k_{m+1}=k_m-l_{m+1}q'[a_1,\cdots,a_m].
\end{equation}
Then by (\ref{F2}) and (\ref{F3}), we have $r_{m+1}^{(k_{m+1})}=r'$ with
$1\leq r_{m+1}^{(k_{m+1})}\leq a_{m+1}$.
By (\ref{F5.20}), it is easy to see that $r_1^{(k_{m+1})}\cdots r_m^{(k_{m+1})}=r_1^{(k_m)}\cdots r_m^{(k_m)}$.
Thus $$r_1^{(k_{m+1})}\cdots r_{m+1}^{(k_{m+1})}\leq \frac{a_1\cdots a_m}{[a_1,\cdots,a_m]}r'\leq
\frac{a_1\cdots a_m}{[a_1,\cdots,a_m]}([a_1,\cdots,a_m],a_{m+1})=\frac{a_1\cdots a_{m+1}}{[a_1,\cdots,a_{m+1}]}$$
and if $a_1,a_2,\cdots,a_{m+1}$ are not pairwise relatively prime,
the above first inequality can be replaced by strict inequality.

Next we prove that we can make an appropriate choice of $k_m$ to ensure $r_{m+1}^{(k_{m+1})}<a_{m+1}$.
Indeed, if $([a_1,\cdots,a_m],a_{m+1})<a_{m+1}$, it is clear. Assume $([a_1,\cdots,a_m],a_{m+1})=a_{m+1}$. Then
$$2\leq a_{m+1}\mid[a_1,\cdots,a_m],$$
and hence there exists $j\in\{1,2,\cdots,m\}$ such that
\begin{equation}
\label{F5.0}(a_j,a_{m+1})\not=1.
\end{equation}
According to the case $n=1$, the case $n=2$ and (\ref{F5.20}), there exists $k_m$ such that
$(k_m,a_1)=1$. By replacing $a_1$ by $a_j$ in the above progress, we have that there exists $k_m$ such that $(k_m,a_j)=1$. In this case, by (\ref{F5.0}) we have $a_{m+1}\nmid k_m$. Again $(a_{m+1},r_{m+1})=1$,
thus we have
$$a_{m+1}\nmid r_{m+1}k_m,$$
that is,
$$([a_1,\cdots,a_m],a_{m+1})\nmid r_{m+1}k_m.$$
Then
$$r_{m+1}^{(k_{m+1})}=r'<([a_1,\cdots,a_m],a_{m+1})=a_{m+1}.$$
Thus this conclusion holds for $n=m+1$, and this completes mathematical induction.

At last, if $a_1,\cdots,a_n$ are pairwise relatively prime, then $\frac{a_1\cdots a_n}{[a_1,\cdots,a_n]}=1$,
and again together with the first part in this lemma, we get that the necessity of the second part
in this lemma holds.
\end{proof}

\begin{proof}[\textbf{PROOF OF PROPOSITION \protect\ref{p5.1}}]
Let $f\in\mathcal{R}_{\Lambda}$ and $\tau f=(f_1,f_2,\cdots,f_n)$. Since
$$d_j\nmid [d_1,\cdots,d_{j-1},d_{j+1},\cdots,d_n]$$
for $j=1,2,\cdots,n$, we have $x_j\mid f_j$ for $j=1,2,\cdots,n$.
Let
$$g=(\frac{f_1}{x_1},\frac{f_2}{x_2},\cdots,\frac{f_n}{x_n}).$$
By Proposition $\ref{L2}$, we have that
\begin{equation}
\label{F5.22}\mathcal{N}_{M(\Lambda)}(f)=\frac{\pi_g(0)}{M(\Lambda)}.
\end{equation}
Let $\lambda_j=e^{2\pi i\frac{r_j}{d_j}}$, where $1\leq r_j<d_j$ and $(r_j,d_j)=1$ for $j=1,2,\cdots,n$.

We first consider the case that $d_1,\cdots,d_n$ are not pairwise relatively prime. Then by Lemma \ref{L1},
there exists an integer $k_n$ such that
$$r_1^{(k_n)}r_2^{(k_n)}\cdots r_n^{(k_n)}<\frac{d_1d_2\cdots d_n}{[d_1,d_2,\cdots,d_n]},$$
where $r_j^{(k_n)}$ is an integer and determined uniquely by
$r_j^{(k_n)}= k_nr_j\ mod\ (d_j)$ and $1\leq r_j^{(k_n)}<d_j$ for $j=1,2,\cdots,n$.
According to the definition of $g$, it is easy to see that $g\comp\tilde\Lambda=g$.
Then
\begin{equation}
\label{F5.4}g\comp\tilde\Lambda^{k_n}=g.
\end{equation}
Combining (\ref{F5.4}) and Lemma \ref{P2.6}, we have that the multiplicity of
$g(y_1^{r_1^{(k_n)}\frac{M(\Lambda)}{d_1}},\cdots,y_n^{r_n^{(k_n)}\frac{M(\Lambda)}{d_n}})$ at the origin
is not less than $M(\Lambda)^n$. Then $$\pi_g(0)\geq\frac{M(\Lambda)^n}{{r_1^{(k_n)}\frac{M(\Lambda)}{d_1}}
\cdots r_n^{(k_n)}\frac{M(\Lambda)}{d_n}}>[d_1,\cdots,d_n]=M(\Lambda),$$
and hence, by (\ref{F5.22}), $\mathcal{N}_{M(\Lambda)}(f)>1.$

Next, we consider the other case, that is, $d_1,\cdots,d_n$ are pairwise relatively prime.
Let $$g(x_1,\cdots,x_n)=(x_1^{d_1},\cdots,x_n^{d_n}).$$
Then $\mathcal{N}_{M(\Lambda)}(f)=\frac{\pi_g(0)}{M(\Lambda)}=1$.

At last, combining these two cases and the arbitrariness of $f$, the proof of this proposition is completed.
\end{proof}
\subsection{Case $\uppercase\expandafter{\romannumeral2}$}
Let $\Lambda$ be an $n\times n$ matrix with the form (\ref{F1.1}) such that $d_j=d\geq 1$ for $j=1,2,\cdots,m$.
Then, in this case, we have the following proposition.

\begin{proposition}
The matrix $\Lambda$ is universal if and only if $m=1$.\label{P5.2}
\end{proposition}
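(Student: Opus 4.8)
\textbf{Proof proposal for Proposition \ref{P5.2}.}

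The plan is to treat the two directions separately, with the substance lying entirely in the necessity. For sufficiency, when $m=1$ the matrix $\Lambda$ is a single Jordan block with eigenvalue a primitive $d$-th root of unity, and $PE(\Lambda)=\{d\}$ (together with $\{1\}$ when $d=1$). Given any admissible sequence $\{a_q\}$, the only nonzero entry to realize is $a_d$, and one exhibits an explicit germ in $\mathcal{R}_\Lambda$, e.g.\ $f(x_1,\dots,x_n)=(\lambda x_1+x_2,\dots,\lambda x_{n-1}+x_n,\lambda x_n+x_1^{rd+1})$, just as in the $m=1$ case of Proposition \ref{Cr3.2}; then $\mathcal{N}_d(f)=r$ (or $r+1$ when $d=1$), so every admissible sequence is realizable and $\Lambda$ is universal.

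For necessity, suppose $m\ge 2$; I want to produce an admissible sequence that is not realizable. Here $PE(\Lambda)=\{d\}$ when $d\ge 2$, so the only free entry is $a_d\ge 1$; when $d=1$, $PE(\Lambda)=\{1\}$ and the free entry is $a_1\ge 2$. The key point is that with $m\ge2$ Jordan blocks sharing the \emph{same} eigenvalue $\lambda$, the value $\mathcal{N}_d(f)$ is forced to be large. Concretely, reduce $f$ to a resonant polynomial normal form via Theorem \ref{T3.1}, use Proposition \ref{p3.3} to assume all monomials of $f_{s_1},\dots,f_{s_m}$ involve only the variables $x_{s_0+1},\dots,x_{s_{m-1}+1}$, and then apply Proposition \ref{L2} (noting $M(\Lambda)=d$ and that condition (\ref{F3.1}) holds trivially here since all $d_j=d$). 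This gives $\mathcal{N}_d(f)=\pi_{\tilde\tau f}(0)/d$ where $\tilde\tau f$ is an $m$-dimensional germ in $x_{s_0+1},\dots,x_{s_{m-1}+1}$ (the remaining coordinates contribute factor $1$ each, by Lemma \ref{P2.6} applied to $f_j=x_{j+1}+\cdots$). The resonance condition forces each $g_{s_j}:=f_{s_j}/x_{s_{j-1}+1}$ to be a polynomial in $x_{s_0+1}^{d},\dots,x_{s_{m-1}+1}^{d}$; writing $y_i=x_{s_{i-1}+1}^{d}$, the lowest-order system of $\tilde\tau f$ in the $y_i$ is a system of $m$ linear forms whose Cronin bound (Lemma \ref{P2.6}) already gives $\pi_{\tilde\tau f}(0)\ge d\cdot m$ when that linear system has $0$ as an isolated solution, hence $\mathcal{N}_d(f)\ge m\ge 2$; and if it is not isolated one gets a strictly larger value. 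Either way $\mathcal{N}_d(f)\ge 2$ for every $f\in\mathcal{O}_\Lambda$, i.e.\ $\rho_d(\Lambda)\ge 2$. Therefore the admissible sequence with $a_d=1$ (and $a_q=0$ otherwise) in the case $d\ge2$, respectively the sequence with $a_1=2$ in the case $d=1$ — wait, one must check $d=1$ separately, since then $\pi_{\tilde\tau f}(0)/1$ counts $\mathcal{N}_1(f)$ and the analogous argument should give $\mathcal{N}_1(f)\ge m+1\ge 3$, so $a_1=2$ is not realizable — is admissible but not realizable, and $\Lambda$ is not universal.

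An alternative, slicker route for necessity uses the monotonicity Theorem \ref{T2}: it suffices to prove the bound for the diagonalizable reduction, since collapsing each block $A_{k_j}$ to a $1\times 1$ block gives $\Lambda'\le\Lambda$ with $\rho_d(\Lambda)\ge\rho_d(\Lambda')$, so I may assume $k_1=\dots=k_m=1$ and $d_1=\dots=d_m=d$. The hard part will be the bookkeeping in the Cronin estimate showing $\pi_{\tilde\tau f}(0)\ge dm$ uniformly over all $f$ — that is, checking that the leading linear system in the variables $y_i=x_i^{d}$ really does have $m$ as its Cronin product $m_1\cdots m_m$ (each $m_i=1$ in the $y$-coordinates, giving product $1$, then the substitution $x_i\mapsto x_i^{d}$ multiplies the multiplicity by $d^m$ while the map $x\mapsto x^d$ in each coordinate has multiplicity $d$, yielding $\pi_{\tilde\tau f}(0)=d^m/d^{m-1}\cdot(\text{something})$) — so the indexing of the substitution and the correct application of Lemma \ref{P4}/Lemma \ref{P2.6} needs care. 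I also need the degenerate-case clause of Lemma \ref{P2.6} to handle germs where the leading linear forms are dependent, which is exactly what forces the strict inequality and hence $\mathcal{N}_d(f)\ge 2$ with no escape.
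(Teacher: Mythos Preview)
Your proposal has a genuine gap in the necessity direction. The hypothesis in Case~II is only that each $\lambda_j$ is a primitive $d$-th root of unity, \emph{not} that all the $\lambda_j$ coincide; you have conflated ``$d_j=d$ for all $j$'' with ``all blocks share the same eigenvalue $\lambda$''. In particular, condition~(\ref{F3.1}) does \emph{not} hold when $m\ge2$ and all $d_j=d$: it requires $d_{j_s}\nmid[d_1,\dots,d_{j_s-1},d_{j_s+1},\dots,d_m]$, but here that bracket equals $d$. So Proposition~\ref{L2} is simply not available, and the division $f_{s_j}/x_{s_{j-1}+1}$ need not even be well-defined (for instance $x_{s_j+1}^{\,d+1}$ is resonant in position $s_j$ whenever $\lambda_j=\lambda_{j+1}$, yet is not divisible by $x_{s_{j-1}+1}$).

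More seriously, your target inequality $\rho_d(\Lambda)\ge 2$ is false in general. Take $m=n=2$, $d=5$, $\lambda_1=e^{2\pi i/5}$, $\lambda_2=e^{4\pi i/5}$, and $f(x_1,x_2)=(\lambda_1 x_1+x_2^{3},\,\lambda_2 x_2+x_1^{2})$. Both nonlinear terms are resonant, $\tau f=(x_2^{3},x_1^{2})$ has $\pi_{\tau f}(0)=6$ by Lemma~\ref{P2.6}, and (\ref{F3.7}) gives $1+5\mathcal{N}_5(f)=6$, i.e.\ $\mathcal{N}_5(f)=1$. Hence $\rho_5(\Lambda)=1$ and your argument cannot go through. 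The paper's proof is accordingly more delicate: it shows that \emph{either} $\rho_d(\Lambda)\ge2$ (so $a_d=1$ is unrealizable), \emph{or} $\rho_d(\Lambda)=1$, in which case a divisibility chain $1=r_{j_1}^{(k)}\mid_{\ne}r_{j_2}^{(k)}\mid_{\ne}\cdots\mid_{\ne}r_{j_m}^{(k)}\mid d+1$ among the exponents of the eigenvalues is forced, and a second Cronin-type analysis then rules out the value $\mathcal{N}_d(f)=2$ for every $f\in\mathcal{R}_\Lambda$. The obstruction to universality is thus a \emph{gap} in the set of achievable values of $\mathcal{N}_d$, not the uniform lower bound you are aiming for.
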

\begin{proof}
The sufficiency follows from Corollary \ref{Cr3.2}. When $d=1$, it is easy to check that the proposition holds. Thus
we only need to prove that $\Lambda$ is not universal when $m\geq 2$ and $d\geq 2$.

Let $\lambda_j=e^{2\pi i\frac{r_j}{d}}$ such that $1\leq r_j<d$ and $(r_j,d)=1$ for $j=1,\cdots,m$.
Then there exists an integer $k$ and a permutation $j_1\cdots j_m$ of $12\cdots m$ such that
$1=r_{j_1}^{(k)}\leq\cdots\leq r_{j_m}^{(k)}<d$,
where $r_{j_s}^{(k)}$ is an integer and
is determined uniquely by $r_{j_s}^{(k)}=kr_{j_s}\ mod\ (d)$
and $1\leq r_{j_s}^{(k)}<d$ ($s=1,2,\cdots,m$).

We claim that $\rho_{M(\Lambda)}(\Lambda)=1$ implies
$r_{j_1}^{(k)}\mid_{\not=} r_{j_2}^{(k)}\mid_{\not=}\cdots\mid_{\not=}r_{j_m}^{(k)}\mid d+1$.

Indeed, according to Theorem \ref{T2}, we can suppose that $k_1=k_2=\cdots=k_m=1$, that is, $m=n$.
Let $f\in\mathcal{R}_{\Lambda}$ such that $\mathcal{N}_{M(\Lambda)}(f)=1$.
By (\ref{F3.7}), we have that $\pi_{\tau f}(0)=d+1$. Together with Lemma \ref{P4}, we have that
\begin{equation}
\label{F5.5}\pi_{\tau f(x_1^{r_1^{(k)}},\cdots,x_n^{r_n^{(k)}})}(0)=(d+1)r_2^{(k)}\cdots r_n^{(k)}.
\end{equation}
Since $f\in\mathcal{R}_{\Lambda}$, we have $f\comp\tilde\Lambda=\tilde\Lambda f$.
Then $\tau f\comp\tilde\Lambda=\tilde\Lambda(\tau f)$ and hence $\tau f\comp\tilde\Lambda^k=\tilde\Lambda^k(\tau f)$.
Again $\tau f$ has a vanishing linear part,
thus we have that $$\tau f(x_1^{r_1^{(k)}},\cdots,x_n^{r_n^{(k)}})=(f_1,\cdots,f_n),$$
where $$f_{j_1}(x_1,\cdots,x_n)=
\sum_{r_1^{(k)}i_1+\cdots+r_n^{(k)}i_n=d+1}c_{i_1\cdots i_n}x_1^{r_1^{(k)}i_1}
\cdots x_n^{r_n^{(k)}i_n}+{\rm higher\ terms}\ (c_{i_1\cdots i_n}\in\mathbb{C})$$
and
$$f_{j_s}(x_1,\cdots,x_n)=\sum_{r_{j_1}^{(k)}i_1+\cdots+r_{j_{s-1}}^{(k)}i_{s-1}=
r_{j_s}^{(k)}}c_{si_1\cdots i_{s-1}}x_{j_1}^{r_{j_1}^{(k)}i_1}\cdots x_{j_{s-1}}^{r_{j_{s-1}}^{(k)}i_{s-1}}+
{\rm higher\ terms}\ (c_{si_1\cdots i_{s-1}}\in\mathbb{C})$$ for $s=2,\cdots,n$.
Let $F=(F_1,\cdots,F_n)$ with
$$F_{j_1}(x_1,\cdots,x_n)=\sum_{r_1^{(k)}i_1+\cdots+r_n^{(k)}i_n=d+1}c_{i_1\cdots i_n}x_1^{r_1^{(k)}i_1}\cdots
x_n^{r_n^{(k)}i_n}$$
and
$$F_{j_s}(x_1,\cdots,x_n)=\sum_{r_{j_1}^{(k)}i_1+\cdots+r_{j_{s-1}}^{(k)}i_{s-1}=r_{j_s}^{(k)}}
c_{si_1\cdots i_{s-1}}x_{j_1}^{r_{j_1}^{(k)}i_1}\cdots x_{j_{s-1}}^{r_{j_{s-1}}^{(k)}i_{s-1}}$$ for $s=2,\cdots,n$.
By (\ref{F5.5}) and Lemma \ref{P2.6}, the origin is an isolated zero of $F$.
Then by Lemma \ref{PR1} we can get that
$F_{j_s}$ has a nonzero monomial proportional to $x_{j_{s-1}}^{r_{j_s}^{(k)}}$ for $s=2,\cdots,n$ and
$F_{j_1}$ has a nonzero monomial proportional to $x_{j_n}^{d+1}$.
Again, since $\tau f$ has a vanishing linear part, we have that $r_{j_1}^{(k)}\mid_{\not=}
r_{j_2}^{(k)}\mid_{\not=}\cdots\mid_{\not=}r_{j_m}^{(k)}\mid d+1$. Thus we complete the proof of the claim.

According to the claim, it is sufficient for completing the proof of this proposition to show that
if $r_{j_1}^{(k)}\mid_{\not=} r_{j_2}^{(k)}\mid_{\not=}\cdots\mid_{\not=}r_{j_m}^{(k)}\mid d+1$,
then $\mathcal{N}_{M(\Lambda)}(f)\not=2$ for all $f\in\mathcal{R}_{\Lambda}$. To prove this,
we suppose that $r_{j_1}^{(k)}\mid_{\not=} r_{j_2}^{(k)}\mid_{\not=}\cdots\mid_{\not=}r_{j_m}^{(k)}\mid d+1$
and there exists $g\in\mathcal{R}_{\Lambda}$ such that $\mathcal{N}_{M(\Lambda)}(g)=2$.

Let $\tau g=(g_1,\cdots,g_n).$ We first consider the case that $k_1=k_2=\cdots=k_m=1$, that is, $m=n$.
Similarly in the proof of the above claim, we can get that
\begin{equation}
\label{F5.23}\pi_{\tau g(x_1^{\alpha_1^{(1)}},\cdots,x_n^{\alpha_n^{(1)}})}(0)=(2d+1)\alpha_{j_2}^{(1)}\cdots
\alpha_{j_n}^{(1)},
\end{equation}
$$g_{j_s}(x_1^{\alpha_1^{(1)}},\cdots,x_n^{\alpha_n^{(1)}})=\sum_{\alpha_{j_1}^{(1)}i_1+\cdots+
\alpha_{j_{s-1}}^{(1)}i_{s-1}
=\alpha_{j_s}^{(1)}}c_{si_1\cdots i_{s-1}}^{(1)}x_{j_1}^{\alpha_{j_1}^{(1)}i_1}\cdots
x_{j_{s-1}}^{\alpha_{j_{s-1}}^{(1)}i_{s-1}}+{\rm higher\ terms},$$
for $s=2,\cdots,n$ and
$$g_{j_1}(x_1^{\alpha_1^{(1)}},\cdots,x_n^{\alpha_n^{(1)}})=\sum_{\alpha_1^{(1)}i_1+\cdots+\alpha_n^{(1)}i_n=d+1}
c_{i_1\cdots i_n}^{(1)}x_1^{\alpha_1^{(1)}i_1}\cdots x_n^{\alpha_n^{(1)}i_n}+{\rm higher\ terms},$$
where $c_{i_1\cdots i_n}^{(1)}$ and $c_{si_1\cdots i_{s-1}}^{(1)}$ are complex numbers and
$\alpha_j^{(1)}=r_j^{(k)}$ for $s=2,\cdots,n$ and $j=1,2,\cdots,n$.
Together with Lemma \ref{P2.6}, this implies
\begin{equation}
\label{F5.6}g_{j_2}(x_1^{\alpha_1^{(1)}},\cdots,x_n^{\alpha_n^{(1)}})=c^{(1)}x_{j_1}^{\alpha_{j_2}^{(1)}}
+{\rm higher\ terms}
\end{equation}
with $c^{(1)}\not=0$. Indeed, if not, that is, $c^{(1)}=0$, then the least degree of nonzero monomials of
$g_{j_2}$ is $\alpha_{j_2}^{(1)}+d$, and hence
$$\pi_{\tau g(x_1^{\alpha_1^{(1)}},\cdots,x_n^{\alpha_n^{(1)}})}(0)\geq
(d+1)(\alpha_{j_2}^{(1)}+d)\alpha_{j_3}^{(1)}\cdots \alpha_{j_n}^{(1)}.$$
This contradicts (\ref{F5.23}).

Let $k_{(1)}=k\frac{d+1}{r_{j_n}^{(k)}}$. Then $1=r_{j_n}^{(k_{(1)})}\mid_{\not=}r_{j_1}^{(k_{(1)})}
\mid_{\not=} r_{j_2}^{(k_{(1)})}\mid_{\not=}\cdots\mid_{\not=}r_{j_{n-1}}^{(k_{(1)})}\mid d+1$.
Together with (\ref{F5.6}), we have $$g_{j_2}(x_1^{\alpha_1^{(2)}},\cdots,x_n^{\alpha_n^{(2)}})
=c^{(1)}x_{j_1}^{\alpha_{j_2}^{(2)}}+\sum_{\substack{i_1+i_2=
\alpha_{j_2}^{(2)};\\i_2\geq1}}c_{i_1i_2}^{(2)}x_{j_1}^{i_1}x_{j_n}^{i_2}+{\rm higher\ terms},$$ and similarly
$$g_{j_1}(x_1^{\alpha_1^{(2)}},\cdots,x_n^{\alpha_n^{(2)}})=c^{(2)}x_{j_n}^{\alpha_{j_1}^{(2)}}+{\rm higher\ terms},$$
where $c^{(2)}\not=0$ and $\alpha_j^{(2)}=r_j^{(k_{(1)})}$ for $j=1,2,\cdots,n$.

Continuing in this manner, and at last let $k_{(n-1)}=k\frac{d+1}{r_{j_2}^{(k)}}$. Then
$1=r_{j_2}^{(k_{(n-1)})}\mid_{\not=}r_{j_3}^{(k_{(n-1)})}\mid_{\not=}\cdots\mid_{\not=}r_{j_n}^{(k_{(n-1)})}
\mid_{\not=}r_{j_1}^{(k_{(n-1)})}\mid d+1$. Similarly, we have
\begin{align*}
g_{j_1}(x_1^{\alpha_1^{(n)}},\cdots,x_n^{\alpha_n^{(n)}})&=c^{(2)}x_{j_n}^{\alpha_{j_1}^{(n)}}+
\sum_{\substack{i_2+\cdots+i_n=
\alpha_{j_1}^{(n)};\\i_n<\alpha_{j_1}^{(n)}}}c_{i_2i_3\cdots i_n}^{(n)}x_{j_2}^{i_2}\cdots
x_{j_n}^{i_n}+{\rm higher\ terms};\\
g_{j_n}(x_1^{\alpha_1^{(n)}},\cdots,x_n^{\alpha_n^{(n)}})&=c^{(3)}x_{j_{n-1}}^{\alpha_{j_n}^{(n)}}+
\sum_{\substack{i_2+\cdots+i_{n-1}=
\alpha_{j_n}^{(n)};\\i_{n-1}<\alpha_{j_n}^{(n)}}}c_{i_2i_3\cdots i_{n-1}}^{(n)}x_{j_2}^{i_2}
\cdots x_{j_{n-1}}^{i_{n-1}}+{\rm higher\ terms};\\
&\vdots\\
g_{j_3}(x_1^{\alpha_1^{(n)}},\cdots,x_n^{\alpha_n^{(n)}})&=c^{(n)}x_{j_2}^{\alpha_{j_3}^{(n)}}+{\rm higher\ terms};\\
g_{j_2}(x_1^{\alpha_1^{(n)}},\cdots,x_n^{\alpha_n^{(n)}})&=c^{(1)}x_{j_1}^{\alpha_{j_2}^{(n)}+d}+
\sum_{\substack{i_1+\cdots+i_n=
\alpha_{j_2}^{(n)}+d;\\i_2+\cdots+i_n\geq1}}c_{i_1i_2\cdots i_n}^{(n)}x_{j_1}^{i_1}\cdots x_{j_n}^{i_n}+
{\rm higher\ terms},
\end{align*}
where $c^{(n)}\not=0$ and $\alpha_j^{(n)}=r_j^{(k_{(n-1)})}$ for $j=1,\cdots,n$. Let
$$G(x_1,\cdots,x_n)=(G_1,G_2,\cdots,G_n)$$
with
\begin{align*}
G_{j_1}(x_1,\cdots,x_n)&=c^{(2)}x_{j_n}^{\alpha_{j_1}^{(n)}}+\sum_{\substack{i_2+\cdots+i_n=
\alpha_{j_1}^{(n)};\\i_n<\alpha_{j_1}^{(n)}}}c_{i_2i_3\cdots i_n}^{(n)}x_{j_2}^{i_2}\cdots x_{j_n}^{i_n};\\
G_{j_n}(x_1,\cdots,x_n)&=c^{(3)}x_{j_{n-1}}^{\alpha_{j_n}^{(n)}}+\sum_{\substack{i_2+\cdots+i_{n-1}=
\alpha_{j_n}^{(n)};\\i_{n-1}<\alpha_{j_n}^{(n)}}}c_{i_2i_3\cdots i_{n-1}}^{(n)}x_{j_2}^{i_2}\cdots
x_{j_{n-1}}^{i_{n-1}};\\
&\vdots\\
G_{j_3}(x_1,\cdots,x_n)&=c^{(n)}x_{j_2}^{\alpha_{j_3}^{(n)}};\\
G_{j_2}(x_1,\cdots,x_n)&=c^{(1)}x_{j_1}^{1+d}+\sum_{\substack{i_1+\cdots+i_n=
1+d;\\i_2+\cdots+i_n\geq1}}c_{i_1i_2\cdots i_n}^{(n)}x_{j_1}^{i_1}\cdots x_{j_n}^{i_n}.
\end{align*}
Then by Lemma \ref{P1} and Lemma \ref{PR1}, we can get that the origin is an isolated zero of $G$ and
$\pi_G(0)=\alpha_{j_1}^{(n)}\alpha_{j_n}^{(n)}\cdots\alpha_{j_3}^{(n)}(d+1)$. Together with Lemma \ref{P2.6},
we have $\pi_{\tau g}(0)=\frac{1}{\alpha_{j_1}^{(n)}\alpha_{j_n}^{(n)}\cdots\alpha_{j_3}^{(n)}}\pi_G(0)=d+1$ and
hence $\mathcal{N}_{M(\Lambda)}(g)=1$.

For general case, by Proposition \ref{p3.3}, there exists $\tilde g\in\mathcal{R}_{\Lambda}$ with
$\tau\tilde g=(\tilde g_1,\tilde g_2,\cdots,\tilde g_n)$ such that $\mathcal{N}_{M(\Lambda)}(\tilde g)=
\mathcal{N}_{M(\Lambda)}(g)=2$ and
$$\tilde g_j=g_j=x_{j+1}+{\rm higher\ terms}$$
for $j\in\{1,2,\cdots,n\}\setminus\{s_1,\cdots,s_m\}$. Moreover,
all variables of every monomial of $\tilde g_{s_1},\cdots,\tilde g_{s_m}$ come from
$x_{s_0+1},x_{s_1+1},\cdots,x_{s_{m-1}+1}$.

Next, we consider
$$\tilde g_j(x_1^{\alpha_1^{(1)}},\cdots,x_{s_1-1},x_{s_1},x_{s_1+1}^{\alpha_2^{(1)}},\cdots,x_{s_2-1},x_{s_2},
\cdots,x_{s_{m-1}+1}^{\alpha_m^{(1)}},\cdots,x_{s_m-1},x_{s_m}), j=1,\cdots,n$$
by the similar way as the above special case, and
we can get that
\begin{align*}
&\tilde g_{s_{j_1}}(x_1^{\alpha_1^{(m)}},\cdots,x_{s_1-1},x_{s_1},x_{s_1+1}^{\alpha_2^{(m)}},\cdots,x_{s_2-1},x_{s_2},
\cdots,x_{s_{m-1}+1}^{\alpha_m^{(m)}},\cdots,x_{s_m-1},x_{s_m})\\
&=c^{(2)}x_{s_{j_m-1}+1}^{\alpha_{j_1}^{(m)}}+\sum_{\substack{i_2+\cdots+i_m=
\alpha_{j_1}^{(m)};\\i_m<\alpha_{j_1}^{(m)}}}c_{i_2i_3\cdots i_m}^{(m)}x_{s_{j_2-1}+1}^{i_2}
\cdots x_{s_{j_m-1}+1}^{i_m}+{\rm higher\ terms},\\
&\tilde g_{s_{j_m}}(x_1^{\alpha_1^{(m)}},\cdots,x_{s_1-1},x_{s_1},x_{s_1+1}^{\alpha_2^{(m)}},\cdots,x_{s_2-1},x_{s_2},
\cdots,x_{s_{m-1}+1}^{\alpha_m^{(m)}},\cdots,x_{s_m-1},x_{s_m})\\
&=c^{(3)}x_{s_{j_{m-1}-1}+1}^{\alpha_{j_m}^{(m)}}+\sum_{\substack{i_2+\cdots+i_{m-1}=
\alpha_{j_m}^{(m)};\\i_{m-1}<\alpha_{j_m}^{(m)}}}c_{i_2i_3\cdots i_{m-1}}^{(m)}x_{s_{j_2-1}+1}^{i_2}
\cdots x_{s_{j_{m-1}-1}+1}^{i_{m-1}}+{\rm higher\ terms},\\
\end{align*}
$$\vdots$$
\begin{align*}
&\tilde g_{s_{j_3}}(x_1^{\alpha_1^{(m)}},\cdots,x_{s_1-1},x_{s_1},x_{s_1+1}^{\alpha_2^{(m)}},\cdots,x_{s_2-1},x_{s_2},
\cdots,x_{s_{m-1}+1}^{\alpha_m^{(m)}},\cdots,x_{s_m-1},x_{s_m})\\
&=c^{(m)}x_{s_{j_2-1}+1}^{\alpha_{j_3}^{(m)}}+{\rm higher\ terms},\\
&\tilde g_{s_{j_2}}(x_1^{\alpha_1^{(m)}},\cdots,x_{s_1-1},x_{s_1},x_{s_1+1}^{\alpha_2^{(m)}},\cdots,x_{s_2-1},x_{s_2},
\cdots,x_{s_{m-1}+1}^{\alpha_m^{(m)}},\cdots,x_{s_m-1},x_{s_m})\\
&=c^{(1)}x_{s_{j_1-1}+1}^{\alpha_{j_2}^{(m)}+d}+\sum_{\substack{i_1+\cdots+i_m=
\alpha_{j_2}^{(m)}+d;\\i_2+\cdots+i_m\geq1}}c_{i_1i_2\cdots i_m}^{(m)}x_{s_{j_1-1}+1}^{i_1}\cdots
x_{s_{j_m-1}+1}^{i_m}+{\rm higher\ terms}\\
\end{align*}
and
\begin{align*}
&\tilde g_j(x_1^{\alpha_1^{(m)}},\cdots,x_{s_1-1},x_{s_1},x_{s_1+1}^{\alpha_2^{(m)}},\cdots,x_{s_2-1},x_{s_2},
\cdots,x_{s_{m-1}+1}^{\alpha_m^{(m)}},\cdots,x_{s_m-1},x_{s_m})\\
&=x_{j+1}+{\rm higher\ terms}\\
\end{align*}
for $j\in\{1,2,\cdots,n\}\setminus\{s_1,\cdots,s_m\}.$
Similarly, by Lemma \ref{P1}, Lemma \ref{PR1} and Lemma \ref{P2.6},
we can get that $\pi_{\tau\tilde g}(0)=d+1$ and
hence $\mathcal{N}_{M(\Lambda)}(g)=\mathcal{N}_{M(\Lambda)}(\tilde g)=1.$
This contradicts the above supposition. Thus we complete the proof.
\end{proof}
\subsection{Case $\uppercase\expandafter{\romannumeral3}$\label{s5.3}}
Let $\Lambda$ be an $n\times n$ matrix with the form (\ref{F1.1}) such that $1\leq d_1\mid d_2\mid\cdots\mid d_m$ and
there exist $s$ and $t\in \{1,2,\cdots,m\}$ such that $d_s\not=d_t$. Then, in this case, we have the following proposition.

\begin{proposition}
\label{P5.3}$d_j\mid_{\not=}d_{j+1}$ and $\lambda_j=\lambda_{j+1}^{\frac{d_{j+1}}{d_j}}$ for $j=1,2,\cdots,m-1$
if and only if there exists
$h\in\mathcal{R}_{\Lambda}$ such that
\begin{equation}
\label{F5.13}\mathcal{N}_q(h)=\left\{\begin{matrix}1\ ,&q\in PE(\Lambda)\ and\ q\geq2\\2\ ,
&1\in PE(\Lambda)\ and\ q=1\\1\ ,&1\not\in PE(\Lambda)\ and\ q=1\\0\ ,&else\end{matrix}\right..
\end{equation}
\end{proposition}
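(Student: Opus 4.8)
The plan is to prove the two implications separately, and in both to exploit Theorem \ref{T3} together with the observation (used already in Example \ref{EX3.1} and the remark after Theorem \ref{T3}) that the chains of ``shift'' coordinates $x_i\mapsto x_{i+1}$ inside a Jordan block contribute only factors $1$ to the relevant zero orders. For the sufficiency direction this means it is enough to build the germ in the diagonal case $k_1=\cdots=k_m=1$; for the necessity direction, if $h\in\mathcal{R}_\Lambda$ realizes the minimal sequence then by Theorem \ref{T2}(1) (applied to $\mathrm{diag}(\lambda_1,\dots,\lambda_m)\le\Lambda$) and the fact that every realizable sequence is admissible, the minimal sequence is again realized on $\mathrm{diag}(\lambda_1,\dots,\lambda_m)$, so we may assume $\Lambda$ is diagonal. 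Note also that under our hypothesis $1\le d_1\mid d_2\mid\cdots\mid d_m$ the set $PE(\Lambda)$ is exactly $\{d_1,\dots,d_m\}$, so Condition (1) to be proved equivalent is precisely ``$d_1\mid_{\neq}\cdots\mid_{\neq}d_m$ and $\lambda_j=\lambda_{j+1}^{d_{j+1}/d_j}$ for all $j$''.

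\textbf{Sufficiency.} Assume $d_j\mid_{\neq}d_{j+1}$ and $\lambda_j=\lambda_{j+1}^{d_{j+1}/d_j}$; chaining these gives $\lambda_l=\lambda_m^{d_m/d_l}$ for every $l$, and $\lambda_1^{d_1}=1$. Define $h\in\mathcal{RE}_\Lambda$ by $h_i=\lambda_{\theta(i)}x_i+x_{i+1}$ for $i\in\{1,\dots,n\}\setminus\{s_1,\dots,s_m\}$ and
\begin{equation*}
h_{s_1}=\lambda_1 x_{s_1}+x_{s_0+1}^{d_1+1}+x_{s_1+1}^{d_2/d_1},\qquad h_{s_m}=\lambda_m x_{s_m}+x_{s_0+1}^{d_1}x_{s_{m-1}+1},
\end{equation*}
\begin{equation*}
h_{s_j}=\lambda_j x_{s_j}+x_{s_0+1}^{d_1}x_{s_{j-1}+1}+x_{s_j+1}^{d_{j+1}/d_j}\qquad(2\le j\le m-1).
\end{equation*}
A direct check of the eigenvalue weights (using $\lambda_l=\lambda_m^{d_m/d_l}$ and $\lambda_1^{d_1}=1$) shows each displayed monomial is resonant, so $h\in\mathcal{RE}_\Lambda$; analysing $\{\tau h=0\}$ by hand (if $x_{s_0+1}\ne0$ the block equations force in turn $x_{s_{m-1}+1}=\cdots=x_{s_1+1}=0$ and then $x_{s_0+1}=0$, a contradiction; if $x_{s_0+1}=0$ the first $m-1$ block equations kill every block-start coordinate) shows $0$ is an isolated zero of $p_{w(\Lambda)}\circ\tau h\circ i_{w(\Lambda)}$, so $h\in\mathcal{R}_\Lambda$ by Theorem \ref{T3}. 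Finally, since $p_{w(d_j)}\circ\Lambda\circ i_{w(d_j)}$ satisfies (\ref{F3.1}) (with the single index corresponding to its bottom block, because $d_j\nmid d_{j-1}$), formula (\ref{FT}) applies; working modulo $x_{s_0+1}^{d_1}$ collapses $h_{s_1}$ to $x_{s_1+1}^{d_2/d_1}$ and $h_{s_t}$ to $x_{s_t+1}^{d_{t+1}/d_t}$, so by Lemma \ref{P1} and Lemma \ref{PR1} the relevant zero order equals $(d_2/d_1)(d_3/d_2)\cdots(d_j/d_{j-1})\,d_1=d_j$, giving $\mathcal{N}_{d_j}(h)=1$; a one-line computation of $\mu_h(0)$ (restricting $\tau h$ to the first block) gives $\mathcal{N}_1(h)=2$ when $d_1=1$ and $\mathcal{N}_1(h)=1$ when $d_1>1$, and $\mathcal{N}_q(h)=0$ for $q\notin PE(\Lambda)\cup\{1\}$ by Theorem \ref{T1.2}. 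Hence $h$ realizes (\ref{F5.13}).

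\textbf{Necessity.} Suppose $h\in\mathcal{R}_\Lambda$ realizes (\ref{F5.13}); by the reduction above we take $\Lambda$ diagonal and, by Proposition \ref{p3.3}, may assume $\tau h=(F_1,\dots,F_m)$ with every $F_l$ built from block-start variables. For each $l$, passing to the sub-chain consisting of the blocks whose degree divides $d_l$ (equivalently to $p_{w(d_l)}\circ h\circ i_{w(d_l)}$, via Theorem \ref{T3} and Theorem \ref{T2}(1)), and combining formula (\ref{FT}) with Cronin's Lemma \ref{P2.6}, the equalities $\mathcal{N}_{d_l}(h)=1$ (resp.\ $\mathcal{N}_1(h)=2$) force the systems of lowest-order forms of $(F_1,\dots,F_{l-1},F_l/x_l)$ to have isolated zeros at the origin with intersection multiplicities exactly $d_l$; because each $\mathrm{ord}\,F_i\ge 2$, these numerical equalities leave essentially no slack, and the telescoping $d_1\cdot\prod_{i<l}\mathrm{ord}\,F_i=d_l$ together with the requirement of isolatedness forces each lowest-order form $F_i$ to be (up to a unit) a single resonant monomial. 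Tracking which resonant monomials can occur then pins everything down: a resonant monomial for the $i$-th coordinate that does not contain $x_i$ must be a pure power $x_{i+1}^{a}$ with $\lambda_{i+1}^{a}=\lambda_i$, which forces $a=d_{i+1}/d_i$ and hence $\lambda_i=\lambda_{i+1}^{d_{i+1}/d_i}$; and the orders, all $\ge 2$ and multiplying up to the $d_l$, force the degree chain to be strictly increasing.

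I expect the necessity half to be the real obstacle. The delicate point is not obtaining the eigenvalue relation (that follows fairly directly from Cronin's lemma once the lowest-order forms are shown to be monomials) but ruling out a repeated consecutive degree $d_j=d_{j+1}$: a two-block sub-configuration $\mathrm{diag}(\lambda_j,\lambda_{j+1})$ with $d_j=d_{j+1}$ can still realize the value $\mathcal{N}_{d_j}=1$ (for suitable $\lambda_j,\lambda_{j+1}$), so the contradiction must be extracted from a three-block sub-configuration $\mathrm{diag}(\lambda_j,\lambda_{j+1},\lambda_{j'})$ with $d_{j'}>d_j=d_{j+1}$, where one shows that $\mathcal{N}_{d_j}(h)=1$ and $\mathcal{N}_{d_{j'}}(h)=1$ cannot hold simultaneously. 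This is the case-heavy computation that mirrors, and extends, the arguments used in Proposition \ref{P5.2} and Proposition \ref{Cr3.2}.
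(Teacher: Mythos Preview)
Your explicit germ $h$ is correct and is literally the paper's example (your $x_{s_0+1}^{d_1+1}$ is the paper's $x_{s_0+1}^{d_1}x_{s_{t-1}+1}$ at $t=1$), so the construction direction is fine.

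In the other direction your outline has gaps beyond the three-block step you already flag. Your use of (\ref{FT}) for the sub-chain at level $d_l$ requires that $p_{w(d_l)}\circ\Lambda\circ i_{w(d_l)}$ satisfy (\ref{F3.1}); this fails precisely when several blocks share the top degree $d_l$, so you cannot invoke (\ref{FT}) before having excluded repeated degrees, and the argument becomes circular. More seriously, even when Cronin's equality holds there is no mechanism forcing the lowest-order homogeneous parts to be single monomials, and no reason such a monomial would be a pure power of $x_{i+1}$: already for $\Lambda=\mathrm{diag}(e^{2\pi i/2},e^{2\pi i/4},e^{2\pi i/8})$ the first coordinate admits the resonant monomials $x_2^2$, $x_3^4$, and $x_2x_3^2$. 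So the eigenvalue relation does not drop out ``fairly directly'' as you suggest.

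The paper obtains $\lambda_s=\lambda_t^{d_t/d_s}$ by a different device: reduce via Theorem \ref{T2} to the $2\times2$ block $\mathrm{diag}(\lambda_s,\lambda_t)$, precompose $\tau g$ with $x_s\mapsto x_s^{r\,d_t/d_s}$ where $r$ is a residue coming from a modular inverse of the exponent of $\lambda_t$, and combine Lemma \ref{P4} with the equality case of Cronin to force $r=1$; the relation then follows. Only afterwards are repeated degrees eliminated, first in the interior via the claim inside Proposition \ref{P5.2}, and then at the top by a three-block computation on $\mathrm{diag}(\lambda_{j-1},\lambda_j,\lambda_{j+1})$ with $d_{j-1}\mid_{\ne}d_j=d_{j+1}$. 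Note that the auxiliary degree there is the \emph{smaller} one, which is the only option when the repetition sits at the very top of the chain; your proposed configuration with $d_{j'}>d_j=d_{j+1}$ would not cover the case $d_{m-1}=d_m$.
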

\begin{proof}
We first prove the sufficiency. Assume that there exists $h\in\mathcal{R}_{\Lambda}$ such that (\ref{F5.13}) holds.
Without loss of generality, let $s<t$. First, let us prove
\begin{equation}
\label{F5.14}\lambda_s=\lambda_t^{\frac{d_t}{d_s}}.
\end{equation}
Let $\Lambda_{st}={\rm diag}\{\lambda_s, \lambda_t\}$. By Theorem \ref{T1.2} and Theorem \ref{T2}, we have
$1\leq\rho_{M(\Lambda_{st})}(\Lambda_{st})\leq\rho_{M(\Lambda_{st})}(\Lambda)$. By (\ref{F5.13}), we have
$\rho_{M(\Lambda_{st})}(\Lambda)=1$. Thus $\rho_{M(\Lambda_{st})}(\Lambda_{st})=1$.
Let $g\in\mathcal{R}_{\Lambda_{st}}$ such that $\mathcal{N}_{M(\Lambda_{st})}(g)=1$
and $\tau g=(g_s,g_t)$. It is easy to see that $\tau g\comp\Lambda_{st}=\Lambda_{st}(\tau g)$.
This implies that $x_t$ is a factor of $g_t$. Let $f=(f_s,f_t)$ with $f_s=g_s$ and $f_t=\frac{g_t}{x_t}$. Then
\begin{equation}
\label{F5.7}f_s\comp\Lambda_{st}=\lambda_s f_s,
\end{equation}
\begin{equation}
\label{F5.8}f_t\comp\Lambda_{st}=f_t.
\end{equation}
And by Proposition \ref{L2}, we have
\begin{equation}
\label{F5.9}\pi_f(0)=M(\Lambda_{st})\mathcal{N}_{M(\Lambda_{st})}(g)=d_t.
\end{equation}
Let $\lambda_s=e^{2\pi i\frac{r_s}{d_s}}$ and $\lambda_t=e^{2\pi i\frac{r_t}{d_t}}$,
where $(r_s,d_s)=(r_t,d_t)=1$, $1\leq r_s\leq d_s$ and $1\leq r_t<d_t$.
Then there exist integers $k,p,q$ and a positive integer $1\leq r\leq d_s$ such that
\begin{equation}
\label{F5.10}kr_t+qd_t=1
\end{equation}
and
\begin{equation}
\label{F5.11}kr_s=pd_s+r.
\end{equation}
By (\ref{F5.7}) and (\ref{F5.8}), we have
\begin{equation}
\label{F5.24}f_s\comp\Lambda_{st}^k=\lambda_s^k f_s
\end{equation}
and
$$f_t\comp\Lambda_{st}^k=f_t.$$
Thus
\begin{equation}
\label{F5.12}f_s(x_s^{r\frac{d_t}{d_s}},x_t)=c_1x_t^{r\frac{d_t}{d_s}}+{\rm higher\ terms}
\end{equation}
and
$$f_t(x_s^{r\frac{d_t}{d_s}},x_t)=\sum_{i_1+i_2=d_t}c_{i_1i_2}x_s^{i_1}x_t^{i_2}+{\rm higher\ terms},$$
where $c_1,c_{i_1i_2}$ are complex numbers. By (\ref{F5.9}) and Lemma \ref{P4},
$$\pi_{f(x_s^{r\frac{d_t}{d_s}},x_t)}(0)=d_tr\frac{d_t}{d_s}.$$
Together with Lemma \ref{P2.6}, we have $$f_t(x_s^{r\frac{d_t}{d_s}},x_t)=c_{d_t0}x_s^{d_t}+{\rm higher\ terms}$$
and $c_1c_{d_t0}\not=0$. This implies $r\frac{d_t}{d_s}\mid d_t$ and hence $r\mid d_s$.
Together with $(r_s,d_s)=1$ and (\ref{F5.10}), we have $(r,kr_s)=1$. Again, by (\ref{F5.11}), $r\mid kr_s$.
Then we have $r=1$. So (\ref{F5.12}) can be expressed as
$$f_s(x_s^{\frac{d_t}{d_s}},x_t)=c_1x_t^{\frac{d_t}{d_s}}+{\rm higher\ terms}.$$
Then $c_1x_t^{\frac{d_t}{d_s}}$ is a nonzero monomial of $f_s(x_s,x_t)$.
Together with (\ref{F5.24}), we get $\lambda_s^k=\lambda_t^{\frac{kd_t}{d_s}}$.
Since $(k,d_s)=1$, there exist integers $p'$ and $q'$ such that $kp'+d_sq'=1$. Then
$$\lambda_s=\lambda_s^{kp'+d_sq'}=(\lambda_s^k)^{p'}=\lambda_t^{\frac{p'kd_t}{d_s}}
=\lambda_t^{\frac{p'kd_t}{d_s}}\lambda_t^{\frac{q'd_sd_t}{d_s}}=\lambda_t^{\frac{d_t}{d_s}},$$
and hence (\ref{F5.14}) holds.

Combining (\ref{F5.14}) and the claim in the proof of Proposition \ref{P5.2},
we can get that there exist an integer $1<j\leq m$ such that
$d_1\mid_{\not=}d_2\mid_{\not=}\cdots\mid_{\not=}d_j=d_{j+1}=\cdots=d_m$ and $\lambda_v=\lambda_j^{\frac{d_j}{d_v}}$
for $v=1,2,\cdots,j$.

Indeed, it is clear that there exists an integer $1<j\leq m$ such that
$d_1\mid d_2\mid\cdots\mid d_{j-1}\mid_{\not=}d_j=d_{j+1}=\cdots=d_m$.
By (\ref{F5.14}), we have
\begin{equation}
\label{F5.15}\lambda_v=\lambda_j^{\frac{d_j}{d_v}}
\end{equation}
for $v=1,2,\cdots,j-1$.
If there exists an integer $v_0\in\{1,2,\cdots,j-2\}$ such that $d_{v_0}=d_{v_0+1}$, then by (\ref{F5.15}),
we have $\lambda_{v_0}=\lambda_{v_0+1}$. Let $\Lambda_{v_0(v_0+1)}={\rm diag}\{\lambda_{v_0},\lambda_{v_0+1}\}$.
Then by Theorem \ref{T2} and the claim in the proof of Proposition \ref{P5.2},
we have
$$\rho_{d_{v_0}}(\Lambda)\geq\rho_{d_{v_0}}(\Lambda_{v_0(v_0+1)})\left\{\begin{matrix}\geq2\ ,
&d_{v_0}\geq2\\ \geq4\ ,&d_{v_0}=1\end{matrix}\right..$$
This contradicts (\ref{F5.13}) and hence $d_1\mid_{\not=}d_2\mid_{\not=}\cdots\mid_{\not=}d_j$.

At last, to complete the proof of the sufficiency, we only need to prove $j=m$.
Indeed, if $j<m$, then we consider $d_{j-1},d_j$ and $d_{j+1}$.
Let $\lambda_{j-1}=e^{2\pi i\frac{r_{j-1}}{d_{j-1}}}$, $\lambda_j=e^{2\pi i\frac{r_j}{d_j}}$ and
$\lambda_{j+1}=e^{2\pi i\frac{r_{j+1}}{d_{j+1}}}$ such that
$(r_{j-1},d_{j-1})=(r_j,d_j)=(r_{j+1},d_{j+1})=1$, $1\leq r_{j-1}\leq d_{j-1}$, $1\leq r_j<d_j$ and
$1\leq r_{j+1}<d_{j+1}$. Since $(r_j,d_j)=1$, $(r_{j+1},d_{j+1})=1$ and $d_j=d_{j+1}>1$,
there exist integers $l, p_1, p_2$ and $r_{j+1}^{(l)}$ such that $lr_j+p_1d_j=1$, $lr_{j+1}=p_2d_{j+1}+r_{j+1}^{(l)}$
and $1\leq r_{j+1}^{(l)}<d_{j+1}$. Then 
$\lambda_j^l=e^{2\pi i\frac{1}{d_j}}$ and $\lambda_{j+1}^l=e^{2\pi i\frac{r_{j+1}^{(l)}}{d_{j+1}}}$.
Together with Theorem \ref{T2} and the claim in the proof of Proposition \ref{P5.2}, we have
\begin{equation}
\label{F5.17}1<r_{j+1}^{(l)}\mid (d_j+1).
\end{equation}
By (\ref{F5.14}), we have $\lambda_{j-1}=\lambda_j^{\frac{d_j}{d_{j-1}}}$ and
$\lambda_{j-1}=\lambda_{j+1}^{\frac{d_{j+1}}{d_{j-1}}}$. Then $\lambda_{j-1}^l=\lambda_j^{l\frac{d_j}{d_{j-1}}}=
e^{2\pi i\frac{1}{d_{j-1}}}$ and
$\lambda_{j-1}^l=\lambda_{j+1}^{l\frac{d_{j+1}}{d_{j-1}}}=e^{2\pi i\frac{r_{j+1}^{(l)}}{d_{j-1}}}$.
Combining these two equations, we have that there exists a positive integer $p_3$ such that
\begin{equation}
\label{F5.18}r_{j+1}^{(l)}=p_3d_{j-1}+1.
\end{equation}
Combining (\ref{F5.17}) and (\ref{F5.18}), we have that there exists a positive integer $K$ such that
\begin{equation}
\label{F5.21}\frac{d_j}{d_{j-1}}=(p_3d_{j-1}+1)K+p_3.
\end{equation}
Let $\Lambda_{(j-1)j(j+1)}={\rm diag}\{\lambda_{j-1},\lambda_j,\lambda_{j+1}\}$. Then
\begin{equation}
\label{F5.19}\Lambda_{(j-1)j(j+1)}^l=
{\rm diag}\{e^{2\pi i\frac{1}{d_{j-1}}},e^{2\pi i\frac{1}{d_j}},e^{2\pi i\frac{p_3d_{j-1}+1}{d_{j+1}}}\}.
\end{equation}
By Theorem \ref{T2},
there exists $H\in\mathcal{R}_{\Lambda_{(j-1)j(j+1)}}$ such that $\mathcal{N}_q(h)\geq\mathcal{N}_q(H)$
for all $q\geq1$.
Together with Theorem \ref{T1.2},
we have
$$\mathcal{N}_{d_{j-1}}(H)=\left\{\begin{matrix}2\ ,&d_{j-1}=1\\1\ ,&d_{j-1}\geq2\end{matrix}\right.$$
and
$\mathcal{N}_{d_j}(H)=1.$
Thus by (\ref{F3.7}), we have
\begin{equation}
\label{F5.16}\pi_{\tau H}(0)=d_j+d_{j-1}+1.
\end{equation}
Let $\tau H=(H_{j-1},H_j,H_{j+1})$. Since $H\in\mathcal{R}_{\Lambda_{(j-1)j(j+1)}}$, we have
$$\tau H\comp\tilde\Lambda_{(j-1)j(j+1)}^l=\tilde\Lambda_{(j-1)j(j+1)}^l(\tau H).$$
Again by (\ref{F5.19}), we have
$$H_{j-1}(x_{j-1}^{j_1},x_j,x_{j+1}^{j_2})=\sum_{i_2+j_2i_3=j_1}
c_{i_2i_3}^{(1)}x_j^{i_2}x_{j+1}^{j_2i_3}+\sum_{j_1i_1+i_2+j_2i_3=d_j+j_1}
c_{i_1i_2i_3}^{(1)}x_{j-1}^{j_1i_1}x_j^{i_2}x_{j+1}^{j_2i_3}+{\rm higher\ terms},$$

$$H_j(x_{j-1}^{j_1},x_j,x_{j+1}^{j_2})=
\sum_{j_1i_1+i_2+j_2i_3=d_j+1}c_{i_1i_2i_3}^{(2)}x_{j-1}^{j_1i_1}x_j^{i_2}x_{j+1}^{j_2i_3}+{\rm higher\ terms}$$
and
$$H_{j+1}(x_{j-1}^{j_1},x_j,x_{j+1}^{j_2})=
c_{j_2}^{(3)}x_j^{j_2}+\sum_{j_1i_1+i_2+j_2i_3=d_j+j_2}
c_{i_1i_2i_3}^{(3)}x_{j-1}^{j_1i_1}x_j^{i_2}x_{j+1}^{j_2i_3}+{\rm higher\ terms},$$
where $c_{i_2i_3}^{(1)},c_{i_1i_2i_3}^{(1)},c_{i_1i_2i_3}^{(2)},c_{j_2}^{(3)},c_{i_1i_2i_3}^{(3)}$ are complex numbers,
$j_1=\frac{d_j}{d_{j-1}}$ and $j_2=p_3d_{j-1}+1$. Then we substitute
$x_{j-1}=t_{j-1}^{p_3d_{j-1}+1}$, $x_j=t_j^{d_j+p_3d_{j-1}+2}$ and $x_{j+1}=t_{j+1}^{p_3d_{j-1}+2}$
into the above three formulas, and consider the least degree of nonzero monomials of
\begin{equation}
\label{F5.30}H_{j-1}(t_{j-1}^{p_3d_j+\frac{d_j}{d_{j-1}}},t_j^{d_j+p_3d_{j-1}+2},t_{j+1}^{(p_3d_{j-1}+2)(p_3d_{j-1}+1)}),
\end{equation}
\begin{equation}
\label{F5.31}H_j(t_{j-1}^{p_3d_j+\frac{d_j}{d_{j-1}}},t_j^{d_j+p_3d_{j-1}+2},t_{j+1}^{(p_3d_{j-1}+2)(p_3d_{j-1}+1)})
\end{equation}
and
\begin{equation}
\label{F5.32}H_{j+1}(t_{j-1}^{p_3d_j+\frac{d_j}{d_{j-1}}},t_j^{d_j+p_3d_{j-1}+2},t_{j+1}^{(p_3d_{j-1}+2)(p_3d_{j-1}+1)})
\end{equation}
respectively. Together with (\ref{F5.21}), we can obtain that the least degree of nonzero monomials of (\ref{F5.30})
is not less than the degree of
$$t_j^{(d_j+p_3d_{j-1}+2)p_3}t_{j+1}^{K(p_3d_{j-1}+1)(p_3d_{j-1}+2)},$$
and by calculation we can get that the least degree of nonzero monomials of (\ref{F5.31}) is not less than the degree of
$$t_{j+1}^{(d_j+1)(p_3d_{j-1}+2)}$$
and the least degree of nonzero monomials of (\ref{F5.32}) is not less than the degree of
$$t_j^{(d_j+p_3d_{j-1}+2)(p_3d_{j-1}+1)}.$$
Thus by Lemma \ref{P2.6} and Lemma \ref{P4}, we have $\pi_{\tau H}(0)\geq2(d_j+1)$.
This contradicts (\ref{F5.16}), and hence $j=m$.

Next we prove the necessity. Assume that $\Lambda$ is of the form (\ref{F1.1}) with $d_j\mid_{\not=}d_{j+1}$ and
$\lambda_j=\lambda_{j+1}^{\frac{d_{j+1}}{d_j}}$ for $j=1,2,\cdots,m-1$. Let $h\in\mathcal{R}_{\Lambda}$ with
$h(x_1,\cdots,x_n)=(h_1,h_2,\cdots,h_n)$ such that
\begin{align*}
h_{s_m}&=\lambda_mx_{s_m}+x_{s_0+1}^{d_1}x_{s_{m-1}+1},\\
h_{s_t}&=\lambda_tx_{s_t}+x_{s_0+1}^{d_1}x_{s_{t-1}+1}+x_{s_t+1}^{\frac{d_{t+1}}{d_t}}
\end{align*}
and
$$h_j=\lambda_{\theta(j)}x_j+x_{j+1}$$
for $t=1,2,\cdots,m-1$ and $j\in\{1,\cdots,n\}\setminus\{s_1,\cdots,s_m\}$.
By the general method in Section \ref{S2} or (\ref{FT}), we can get that $h$ satisfies (\ref{F5.13}).
Thus we complete the proof of the necessity.
\end{proof}

\subsection{Case $\uppercase\expandafter{\romannumeral4}$}
Let $\Lambda$ be an $n\times n$ matrix with the form (\ref{F1.1}) such that
$m=4$, $1<d_1\mid_{\not=} d_2$, $1<d_3\mid_{\not=} d_4$ and $(d_2,d_4)=1$.
Then, in this case, we have the following proposition.

\begin{proposition}
The matrix $\Lambda$ is not universal.\label{P5.4}
\end{proposition}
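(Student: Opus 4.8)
The plan is to exhibit an \emph{admissible} sequence for $\Lambda$ that is realized by no germ in $\mathcal{O}_\Lambda$, in the spirit of Gorbovickis' Theorem \ref{T1.3} and of the obstruction in the proof of Proposition \ref{Cr3.2}. As usual we reduce to resonant polynomial normal forms: by Theorem \ref{T3.1} it suffices to rule out every $f\in\mathcal{R}_\Lambda$, and by Proposition \ref{p3.3} we may assume that, with $\tau f=(f_1,\dots,f_n)$, every monomial of $f_{s_1},f_{s_2},f_{s_3},f_{s_4}$ involves only the four leading variables $x_{s_0+1},x_{s_1+1},x_{s_2+1},x_{s_3+1}$, while $f_j=x_{j+1}+(\text{higher terms})$ otherwise; then each $\pi_{p_{w(q)}\comp\tau f\comp i_{w(q)}}(0)$ is computed from the leading-variable subsystem, exactly as in the proofs of Propositions \ref{P5.2} and \ref{Cr3.2}, and arbitrary block sizes are recovered at the end precisely as in the ``general case'' step of the proof of Proposition \ref{P5.2}.

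First I would dispose of the incompatible chains. Since $d_1\mid_{\neq}d_2$ we have ${\rm diag}(\lambda_1,\lambda_2)\leq\Lambda$; if $\lambda_1\neq\lambda_2^{d_2/d_1}$, then the derivation inside the proof of Proposition \ref{P5.3} (applied with $m=2$, so $M(\Lambda_{st})=d_2$) shows that $\mathcal{N}_{d_2}(g)=1$ is impossible for $g\in\mathcal{R}_{{\rm diag}(\lambda_1,\lambda_2)}$, hence $\rho_{d_2}({\rm diag}(\lambda_1,\lambda_2))\geq 2$, hence $\rho_{d_2}(\Lambda)\geq 2$ by Theorem \ref{T2}(2). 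Since $d_2\in PE(\Lambda)$, the admissible sequence with $\mathcal{N}_{d_2}=1$ is then not realizable and $\Lambda$ is not universal. The symmetric argument settles $\lambda_3\neq\lambda_4^{d_4/d_3}$. Hence from now on $\lambda_1=\lambda_2^{d_2/d_1}$ and $\lambda_3=\lambda_4^{d_4/d_3}$, i.e.\ $\Lambda$ is a direct sum of two compatible chains of length two.

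In this case a direct computation gives $PE(\Lambda)=\{d_1,d_2,d_3,d_4,d_1d_3,d_1d_4,d_2d_3,d_2d_4\}$ with $M(\Lambda)=d_2d_4$, and for $q=d_2,d_4,d_1d_3,d_1d_4,d_2d_3$ the word $w(q)$ selects the index blocks $\{1,2\}$, $\{3,4\}$, $\{1,3\}$, $\{1,3,4\}$, $\{1,2,3\}$ respectively, whereas $w(d_2d_4)$ selects all four blocks. I would then take as bad sequence the admissible sequence with $\mathcal{N}_1=1$, with $\mathcal{N}_q$ assigned prescribed small values for $q\in PE(\Lambda)$, $q<M(\Lambda)$ (some equal to $1$, some to $2$, chosen as in the bad sequence of the proof of Proposition \ref{Cr3.2}), and with $\mathcal{N}_{M(\Lambda)}$ assigned the value $2$. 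Assuming $f\in\mathcal{R}_\Lambda$ realizes all the prescribed values for $q<M(\Lambda)$: applying Lemma \ref{P2.6} to the restrictions $p_{w(d_i)}\comp\tau f\comp i_{w(d_i)}$ together with the exponent substitutions $x_j\mapsto x_j^{c_j}$ of the proof of Proposition \ref{P5.2}, the conditions on $\mathcal{N}_{d_1},\dots,\mathcal{N}_{d_4}$ pin down the lowest-degree parts of $f_{s_1},\dots,f_{s_4}$ (removing the ``diagonal'' leading monomials and forcing the chain monomials predicted by the normal form of Proposition \ref{P5.3} to occur), while the conditions on $\mathcal{N}_{d_1d_3},\mathcal{N}_{d_1d_4},\mathcal{N}_{d_2d_3}$ force the cross terms joining the two chains to be nonzero and of prescribed monomial shape, just as $\mathcal{N}_{d_id_j}=1$ forces $a_{ij}a_{ji}\neq0$ in Proposition \ref{Cr3.2}. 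Feeding this structure into the formula (\ref{F3.7}) with $d=M(\Lambda)$ and one more application of Lemma \ref{P2.6}, one forces $\pi_{p_{w(M(\Lambda))}\comp\tau f\comp i_{w(M(\Lambda))}}(0)=1+\sum_{q\in PE(\Lambda)}q\,\mathcal{N}_q(f)$ to equal exactly the value corresponding to $\mathcal{N}_{M(\Lambda)}(f)=1$, contradicting $\mathcal{N}_{M(\Lambda)}(f)=2$.

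The main obstacle is this last rigidity step. For three pairwise coprime blocks it is Gorbovickis' combinatorial identity, and in Cases II--III it is the exponent-substitution technique applied to a single phenomenon (equal orders, or one chain). Here the two length-two chains \emph{interact}: one must control simultaneously the lowest-degree structure forced by each chain (as in Proposition \ref{P5.3}) and the coprime cross terms joining them (as in Proposition \ref{Cr3.2}), and show that keeping all of $\mathcal{N}_{d_1},\dots,\mathcal{N}_{d_4},\mathcal{N}_{d_1d_3},\mathcal{N}_{d_1d_4},\mathcal{N}_{d_2d_3}$ at their prescribed values leaves no freedom except the ``monomial'' germ, which has $\mathcal{N}_{M(\Lambda)}=1$. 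Carrying this out uniformly for all admissible data $d_1\mid_{\neq}d_2$, $d_3\mid_{\neq}d_4$ with $(d_2,d_4)=1$ — rather than for a single numerical instance — is where essentially all the effort goes.
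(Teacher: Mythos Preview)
Your reduction (Theorem~\ref{T3.1}, Proposition~\ref{p3.3}, the passage to $k_1=\cdots=k_4=1$, and the use of Theorem~\ref{T2} and Proposition~\ref{P5.3} to force $\lambda_1=\lambda_2^{d_2/d_1}$, $\lambda_3=\lambda_4^{d_4/d_3}$) is exactly how the paper begins. But from there the paper does \emph{not} try to pin $\mathcal{N}_{M(\Lambda)}=\mathcal{N}_{d_2d_4}$ from the seven lower periods, as you propose. It goes the other way: it \emph{assumes} $\mathcal{N}_{d_2d_4}\ge 2$ (together with $\mathcal{N}_{d_1}\ge 2$, $\mathcal{N}_{d_3}\ge 2$, $\mathcal{N}_{d_2}=1$, $\mathcal{N}_{d_4}=1$, $\mathcal{N}_{d_1d_3}=1$) and derives that $\mathcal{N}_{d_1d_4}=1$ and $\mathcal{N}_{d_2d_3}=1$. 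The unrealizable admissible sequence is then one with $\mathcal{N}_{d_1d_4}=2$ (say), not one with $\mathcal{N}_{d_2d_4}=2$.

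The reason this direction works and yours stalls is the algebraic content of the hypothesis $\mathcal{N}_{d_2d_4}\ge 2$. Writing $g_1=c_{11}x_2^{d_2/d_1}+c_{12}x_1x_3^{d_3}+\cdots$, $g_2/x_2=c_{21}x_1^{d_1}+c_{22}x_3^{d_3}+\cdots$, $g_3=c_{31}x_4^{d_4/d_3}+c_{32}x_3x_1^{d_1}+\cdots$, $g_4/x_4=c_{41}x_1^{d_1}+c_{42}x_3^{d_3}+\cdots$, the conditions $\mathcal{N}_{d_2}=1$ and $\mathcal{N}_{d_4}=1$ give $c_{11}c_{21}\ne 0$ and $c_{31}c_{42}\ne 0$, and then the crucial step is that $\mathcal{N}_{d_2d_4}\ge 2$ forces the \emph{rank-one} relation $c_{21}c_{42}-c_{22}c_{41}=0$ (with $c_{22}c_{41}\ne 0$). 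This single identity is what rigidifies the cross terms and makes Lemma~\ref{P2.6} give $\mathcal{N}_{d_1d_4}=\mathcal{N}_{d_2d_3}=1$. Your plan, which tries to fix $\mathcal{N}_{d_1d_4}$ and $\mathcal{N}_{d_2d_3}$ first and read off $\mathcal{N}_{d_2d_4}$, never generates this relation; the ``leaves no freedom except the monomial germ'' claim is exactly the step you cannot carry out without it. In short: same setup, but the obstruction lives at the intermediate periods $d_1d_4,d_2d_3$, and the input $\mathcal{N}_{d_2d_4}\ge 2$ is what supplies the missing algebraic equation.
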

\begin{proof}
According to Theorem \ref{T2} and Proposition \ref{P5.3}, we only need to consider the case that
$\lambda_1=\lambda_2^{\frac{d_2}{d_1}}$ and $\lambda_3=\lambda_4^{\frac{d_4}{d_3}}$.
Let $\lambda_2=e^{2\pi i\frac{r_2}{d_2}}$ and $\lambda_4=e^{2\pi i\frac{r_4}{d_4}}$ such that
$(r_2,d_2)=(r_4,d_4)=1$, $1\leq r_2<d_2$ and $1\leq r_4<d_4$.
Then $\lambda_1=e^{2\pi i\frac{r_2}{d_1}}$ and $\lambda_3=e^{2\pi i\frac{r_4}{d_3}}$.
Moreover, by Lemma \ref{L1} or Chinese Remainder Theorem, there exist integers $k,p$ and $q$ such that $kr_2=pd_2+1$ and $kr_4=qd_4+1$. Thus
\begin{equation}
\label{F5.25}\lambda_j^k=e^{2\pi i\frac{1}{d_j}}
\end{equation}
for $j=1,2,3,4$.

Let $g\in\mathcal{R}_{\Lambda}$ such that $\mathcal{N}_{d_1}(g)\geq2$, $\mathcal{N}_{d_3}(g)\geq2$, $\mathcal{N}_{d_2}(g)=1$, $\mathcal{N}_{d_4}(g)=1$, $\mathcal{N}_{d_1d_3}(g)=1$
and $\mathcal{N}_{d_2d_4}(g)\geq2$.
We prove $\mathcal{N}_{d_1d_4}(g)=1$ and $\mathcal{N}_{d_2d_3}(g)=1$,
which indicates that $\Lambda$ is not universal.

We only consider the case that $k_1=k_2=k_3=k_4=1$, since we can extend this special case to general case,
as in the proof of Proposition \ref{P5.2}. Let $\tau g=(g_1,g_2,g_3,g_4)$.
Since $g\in\mathcal{R}_{\Lambda}$, we have $\tau g\comp\tilde\Lambda=\tilde\Lambda(\tau g)$.
This implies that $x_2\mid g_2$ and $x_4\mid g_4$. Let $f_2=\frac{g_2}{x_2}$ and $f_4=\frac{g_4}{x_4}$.
Then $g_1\comp\tilde\Lambda^k=\lambda_1^kg_1$, $f_2\comp\tilde\Lambda^k=g_2$, $g_3\comp\tilde\Lambda^k=\lambda_3^kg_3$
and $f_4\comp\tilde\Lambda^k=f_4$. Together with (\ref{F5.25}), we have
\begin{align*}
g_1&=c_{11}x_2^{\frac{d_2}{d_1}}+c_{12}x_1x_3^{d_3}+x_1P_{11}(x_1,x_2,x_3,
x_4)+x_2^{\frac{d_2}{d_1}}P_{12}(x_1,x_2,x_3,x_4),\\
f_2&=c_{21}x_1^{d_1}+c_{22}x_3^{d_3}+P_2(x_1,x_2,x_3,x_4),\\
g_3&=c_{31}x_4^{\frac{d_4}{d_3}}+c_{32}x_3x_1^{d_1}+x_3P_{31}(x_1,x_2,x_3,x_4)+x_4^{\frac{d_4}{d_3}}P_{32}
(x_1,x_2,x_3,x_4),\\
f_4&=c_{41}x_1^{d_1}+c_{42}x_3^{d_3}+P_4(x_1,x_2,x_3,x_4),
\end{align*}
where $c_{11},c_{12},c_{21},c_{22},c_{31},c_{32},c_{41},c_{42}$ are complex numbers,
$P_{11},P_{12},P_2,P_{31},P_{32},P_4$ are polynomials in $x_1,x_2,x_3,x_4$ such that
$$P_{11}(0,0,0,0)=0,\ P_{12}(0,0,0,0)=0,\ P_2(0,0,0,0)=0,\ P_{31}(0,0,0,0)=0,$$
$$P_{32}(0,0,0,0)=0,\ P_4(0,0,0,0)=0,\ P_{11}\comp\tilde\Lambda^k=P_{11},\ P_{12}\comp\tilde\Lambda^k=P_{12},$$
$$P_{2}\comp\tilde\Lambda^k=P_{2},\ P_{31}\comp\tilde\Lambda^k=P_{31},\ P_{32}\comp\tilde\Lambda^k=P_{32}\ {\rm and}\
P_{4}\comp\tilde\Lambda^k=P_{4}.$$
And $P_{11}$ have no monomial proportional to $x_3^{d_3}$, $P_{31}$
have no monomial proportional to $x_1^{d_1}$, and neither $P_2$ or $P_4$ have no monomial proportional to
$x_1^{d_1}$ or $x_3^{d_3}$. Furthermore, by (\ref{FT}) and Lemma \ref{P2.6}, it is not difficult to show that
\begin{enumerate}
\item $\mathcal{N}_{d_1}(g)\geq2$ implies that $P_{11}$ have no monomial proportional to $x_1^{d_1}$;
\item $\mathcal{N}_{d_3}(g)\geq2$ implies that $P_{31}$ have no monomial proportional to $x_3^{d_3}$;
\item $\mathcal{N}_{d_2}(g)=1$ implies that $c_{11}c_{21}\not=0$;
\item $\mathcal{N}_{d_4}(g)=1$ implies that $c_{31}c_{42}\not=0$;
\item Together with (1) and (2), $\mathcal{N}_{d_1d_3}(g)=1$ implies that $c_{12}c_{32}\not=0$;
\item Together with (3) and (4), $\mathcal{N}_{d_2d_4}(g)\geq2$ implies that $c_{22}c_{41}\not=0$ and
$\frac{c_{21}}{c_{22}}=\frac{c_{41}}{c_{42}}$.
\end{enumerate}
At last, based on the above results, by (\ref{FT}) and Lemma \ref{P2.6}, we have $\mathcal{N}_{d_1d_4}(g)=1$ and
$\mathcal{N}_{d_2d_3}(g)=1$.
\end{proof}

\subsection{Case $\uppercase\expandafter{\romannumeral5}$}
Let $\Lambda$ be an $n\times n$ matrix with the form (\ref{F1.1}) such that
$m=5$, $d_1=1$, $1<d_2\mid_{\not=} d_3$, $1<d_4\mid_{\not=} d_5$ and $(d_3,d_5)=1$.
Then, in this case, we have the following proposition.

\begin{proposition}
The matrix $\Lambda$ is not universal.\label{P5.5}
\end{proposition}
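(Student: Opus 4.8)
The plan is to follow the template of the proof of Proposition~\ref{P5.4}, enlarged to accommodate the additional block of degree $1$. First I would invoke Theorem~\ref{T2} to reduce to the diagonalizable case $k_1=\cdots=k_5=1$; the general case then follows at the end by applying Proposition~\ref{p3.3}, just as in the proof of Proposition~\ref{P5.2}. Next, assuming $\Lambda$ universal, Theorem~\ref{T2} applied to the three-block sub-matrices $\mathrm{diag}(1,\lambda_2,\lambda_3)\le\Lambda$ and $\mathrm{diag}(1,\lambda_4,\lambda_5)\le\Lambda$ shows that each of them realizes the minimal admissible sequence~(\ref{F5.13}), so by the necessity part of Proposition~\ref{P5.3} one has $\lambda_2=\lambda_3^{d_3/d_2}$ and $\lambda_4=\lambda_5^{d_5/d_4}$; we may therefore assume these relations, since otherwise $\Lambda$ is already not universal. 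Finally, because $(d_3,d_5)=1$, Lemma~\ref{L1} (or the Chinese Remainder Theorem) provides an integer $k$ with $\lambda_j^{\,k}=e^{2\pi i/d_j}$ for $j=2,3,4,5$, so that $\tilde\Lambda^{\,k}=\mathrm{diag}(1,e^{2\pi i/d_2},e^{2\pi i/d_3},e^{2\pi i/d_4},e^{2\pi i/d_5})$.

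With these normalizations I would mimic the argument of Proposition~\ref{P5.4}. Fix $g\in\mathcal{R}_\Lambda$ with $\tau g=(g_1,\dots,g_5)$; from the equivariance $\tau g\comp\tilde\Lambda^{\,k}=\tilde\Lambda^{\,k}(\tau g)$ — and the divisibilities $x_3\mid g_3$, $x_5\mid g_5$ forced by $d_2\mid_{\neq}d_3$ and $d_4\mid_{\neq}d_5$ — each $g_i$ is determined, up to higher-order remainders, by a short list of resonant monomials with named coefficients. The heart of the matter is the implication: if $\mathcal{N}_{d_2}(g)\ge2$, $\mathcal{N}_{d_4}(g)\ge2$, $\mathcal{N}_{d_3}(g)=\mathcal{N}_{d_5}(g)=1$, $\mathcal{N}_{d_2d_4}(g)=1$ and $\mathcal{N}_{d_3d_5}(g)\ge2$, then $\mathcal{N}_{d_3d_4}(g)=1$ and $\mathcal{N}_{d_2d_5}(g)=1$. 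Each hypothesis is rewritten, via formula~(\ref{FT}) applied to the restriction $p_{w(q)}\comp g\comp i_{w(q)}$ (whose linear part is again of the form~(\ref{F1.1}) and satisfies~(\ref{F3.1})) together with Cronin's Lemma~\ref{P2.6}, as the vanishing or non-vanishing of specified leading coefficients of $g_1,\dots,g_5$; feeding the resulting relations back into Lemma~\ref{P2.6}, after pre-composing with suitable monomial substitutions as in the proofs of Propositions~\ref{Cr3.2}, \ref{P5.2} and \ref{P5.4}, yields the two claimed equalities. Granting the implication, the non-negative integer sequence $\{a_q\}$ given by $a_1=a_{d_2}=a_{d_4}=a_{d_3d_5}=a_{d_2d_5}=a_{d_3d_4}=2$, $a_{d_3}=a_{d_5}=a_{d_2d_4}=1$, and $a_q=0$ for every other $q$, is admissible for $\Lambda$ (one checks $PE(\Lambda)=\{1,d_2,d_3,d_4,d_5,d_2d_4,d_2d_5,d_3d_4,d_3d_5\}$), but any $g$ realizing it would satisfy the six hypotheses above and hence $\mathcal{N}_{d_2d_5}(g)=1\neq2$; so this sequence is not realizable, and $\Lambda$ is not universal.

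The step I expect to be the main obstacle is the bookkeeping around the degree-$1$ block, that is, the component $g_1$ and the variable $x_1$. Since $\lambda_1=1$, resonance puts no constraint on the exponent of $x_1$, so $g_1$ may contain pure powers $x_1^{\,i_1}$ with $i_1\ge2$, and $x_1$ may occur in low-degree monomials of the other components — phenomena absent from Proposition~\ref{P5.4}, where every eigenvalue has degree greater than $1$. I would need to show that this extra freedom cannot rescue the sequence: that such $x_1$-monomials either drop out of the restrictions $p_{w(q)}\comp g\comp i_{w(q)}$ used to compute $\mathcal{N}_{d_2d_4}(g)$, $\mathcal{N}_{d_3d_5}(g)$, $\mathcal{N}_{d_2d_5}(g)$ and $\mathcal{N}_{d_3d_4}(g)$, or else do not affect the leading forms entering the Cronin computations — possibly after a further normalization of $g_1$ in the spirit of the reduction for the case $d_1=1$ in the proof of Proposition~\ref{Cr3.2}, where the analogous coefficient $a_{11}$ is shown to vanish. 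Once the $x_1$-terms are controlled, what remains is precisely the four-block analysis of Proposition~\ref{P5.4}, carried out on the blocks of degrees $d_2,d_3,d_4,d_5$.
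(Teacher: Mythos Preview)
Your plan diverges from the paper's at precisely the point you flag as the main obstacle. Rather than mirror Proposition~\ref{P5.4} with $d_2,d_3,d_4,d_5$ in the roles of the old $d_1,d_2,d_3,d_4$ and treat the degree-$1$ block as noise to be neutralized, the paper picks an asymmetric and much shorter chain: from $\mathcal{N}_{d_2}(g)=1$, $\mathcal{N}_{d_4}(g)=1$, $\mathcal{N}_{d_3}(g)=2$ and $\mathcal{N}_{d_3d_5}(g)=1$ it deduces $\mathcal{N}_{d_5}(g)=1$. Choosing the \emph{minimal} values $\mathcal{N}_{d_2}=\mathcal{N}_{d_4}=1$ forces nonvanishing of the $x_2^{d_2}$ and $x_4^{d_4}$ coefficients in $g_1$, so the leading behaviour of the degree-$1$ component is pinned down rather than left free; then $\mathcal{N}_{d_3}(g)=2$ forces $g_2$ to contain $x_3^{d_3/d_2}$ and $f_3=g_3/x_3$ to have \emph{no} $x_1$-term, while $\mathcal{N}_{d_3d_5}(g)=1$ forces $g_4$ to contain $x_5^{d_5/d_4}$ and $f_5=g_5/x_5$ to \emph{have} a nonzero $x_1$-term; this last fact, combined with the $x_4^{d_4}$ term in $g_1$ and the $x_5^{d_5/d_4}$ term in $g_4$, gives $\mathcal{N}_{d_5}(g)=1$ directly. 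So the $x_1$-variable is not a nuisance to be controlled but the pivot of the argument, and the paper needs only four hypotheses and one conclusion against your six and two.

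Your own implication may be true, but the bookkeeping you defer is not cosmetic. In your scheme every restriction $p_{w(q)}\comp g\comp i_{w(q)}$ contains the $x_1$-coordinate (since $1\mid q$), so each hypothesis is a condition on a system one dimension larger than its Proposition~\ref{P5.4} analogue, and the translation into coefficient constraints is correspondingly murkier: for example it is not clear that $\mathcal{N}_{d_3d_5}(g)\ge2$ together with your other hypotheses still yields a clean ratio identity like $c_{21}/c_{22}=c_{41}/c_{42}$ once all the $x_1$-monomials in $g_1,\dots,g_5$ are present. The paper's choice of hypotheses sidesteps this entirely.
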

\begin{proof}
We will prove this proposition in a similar way as the proof of Proposition \ref{P5.4}.
First, similar to the first paragraph in the proof of Proposition \ref{P5.4},
we can get that there exists an integer $k$ such that
\begin{equation}
\label{F5.26}\lambda_{j}^k=e^{2\pi i\frac{1}{d_j}}
\end{equation}
for $j=2,3,4,5$. Second, similar to the second paragraph in the proof of Proposition \ref{P5.4},
let $g\in\mathcal{R}_{\Lambda}$ such that $\mathcal{N}_{d_2}(g)=1$, $\mathcal{N}_{d_4}(g)=1$, $\mathcal{N}_{d_3}(g)=2$,
$\mathcal{N}_{d_3d_5}(g)=1$. And we will prove $\mathcal{N}_{d_5}(g)=1$, which indicates that $\Lambda$ is not universal.
At last, similar to the third paragraph in the proof of Proposition \ref{P5.4}, we only consider the case that
$k_1=k_2=k_3=k_4=k_5=1$ and let $\tau g=(g_1,g_2,g_3,g_4,g_5)$. Since $g\in\mathcal{R}_{\Lambda}$, we have
$\tau g\comp\tilde\Lambda=\tilde\Lambda(\tau g)$. This implies $x_3\mid g_3$ and
$x_5\mid g_5$. Let $f_3=\frac{g_3}{x_3}$ and $f_5=\frac{g_5}{x_5}$.
Then $g_1\comp\tilde\Lambda^k=g_1$, $g_2\comp\tilde\Lambda^k=\lambda_2^kg_2$, $f_3\comp\tilde\Lambda^k=f_3$,
$g_4\comp\tilde\Lambda^k=\lambda_4^kg_4$ and $f_5\comp\tilde\Lambda^k=f_5$. By (\ref{F5.26}), (\ref{FT}) and
Lemma \ref{P2.6}, it is not difficult to show that
\begin{enumerate}
\item $\mathcal{N}_{d_2}(g)=1$ implies that $g_1$ has a nonzero monomial proportional to $x_2^{d_2}$;
\item $\mathcal{N}_{d_4}(g)=1$ implies that $g_1$ has a nonzero monomial proportional to $x_4^{d_4}$;
\item Together with (1), $\mathcal{N}_{d_3}(g)=2$ implies that $g_2$ has a nonzero monomial proportional to
$x_3^{\frac{d_3}{d_2}}$ and $f_3$ has no nonzero monomial proportional to $x_1$;
\item Together with (1), (2) and (3), $\mathcal{N}_{d_3d_5}(g)=1$ implies that $g_4$ has a nonzero monomial
proportional to $x_5^{\frac{d_5}{d_4}}$ and $f_5$ has a nonzero monomial proportional to $x_1$.
\end{enumerate}
Combining (2) and (4), we can get $\mathcal{N}_{d_5}(g)=1$. Thus this completes the proof of this proposition.

\end{proof}

\subsection{The proof of Theorem \ref{T1}}
\begin{proof}
We first prove the necessity. Assume that $\Lambda$ is universal. For any $d_j,d_l$, by Theorem \ref{T2} and
Proposition \ref{p5.1}, we have that $d_j,d_l$ are relatively prime or $d_j\mid d_l$ or $d_l\mid d_j$.
Then it is clear that there exist a permutation $j_1\cdots j_m$ of $12\cdots m$ and $1=l_1<l_2<\cdots<l_t\leq m$
such that
\begin{itemize}
\item $d_{j_{l_s}}\mid d_{j_{l_s+1}}\mid \cdots\mid d_{j_{l_{s+1}-1}}$ for $s=1,2,\cdots,t-1$;
\item $d_{j_{l_t}}\mid d_{j_{l_t+1}}\mid\cdots\mid d_m$;
\item $d_{j_{l_2-1}},d_{j_{l_3-1}},\cdots,d_{j_{l_t-1}},d_m$ are pairwise relatively prime.
\end{itemize}
By Lemma \ref{P2.6}, we can get that at most one of $d_1,\cdots,d_m$ is $1$. Then by Theorem \ref{T3},
Proposition \ref{P5.2} and Proposition \ref{P5.3}, we have that
$d_{j_{l_s}}\mid_{\not=}d_{j_{l_s+1}}\mid_{\not=}\cdots\mid_{\not=}d_{j_{l_{s+1}-1}}$ for
$s=1,2,\cdots,t-1$ and $d_{j_{l_t}}\mid_{\not=}d_{j_{l_t+1}}\mid_{\not=}\cdots\mid_{\not=}d_m$.
Considering $d_{j_1},d_{j_2}\cdots,d_{j_{j_t}}$ by Theorem \ref{T3} and Proposition \ref{Cr3.2},
we get that $t\leq3$ and if $t=3$,  one of $d_{j_1}$, $d_{j_2}$ and $d_{j_3}$ is $1$. At last,
according to Proposition \ref{Cr3.2}, Proposition \ref{P5.4} and Proposition \ref{P5.5},
we can get the necessity.

Next we prove the sufficiency. For all $n\times n$ invertible matrix $X$, by Lemma \ref{P4} we have $\Lambda$ and $X\Lambda X^{-1}$
have the same universal property. Thus Example \ref{EX3.1} indicates that (2) implies that $\Lambda$ is universal,
and the following example indicates that (1) implies that $\Lambda$ is universal.
\begin{example}
{\rm Let $\Lambda$ be of the form (\ref{F1.1}) with $d_j\mid_{\not=}d_{j+1}$ and
$\lambda_j=\lambda_{j+1}^{\frac{d_{j+1}}{d_j}}$ for $j=1,2,\cdots,m-1$.
Let $g\in\mathcal{O}_{\Lambda}$ with $g(x_1,\cdots,x_n)=(g_1,g_2,\cdots,g_n)$ such that
\begin{align*}
g_{s_m}&=\lambda_mx_{s_m}+x_{s_0+1}^{r_md_1}x_{s_{m-1}+1},\\
g_{s_t}&=\lambda_tx_{s_t}+x_{s_0+1}^{r_td_1}x_{s_{t-1}+1}+x_{s_t+1}^{\frac{d_{t+1}}{d_t}}
\end{align*}
and
$$g_j=\lambda_{\theta(j)}x_j+x_{j+1}$$
for $t=1,2,\cdots,m-1$ and $j\in\{1,\cdots,n\}\setminus\{s_1,\cdots,s_m\}$.

By the general method in Section \ref{S2} or (\ref{FT}), we can get that $\mathcal{N}_{d_j}(g)=r_j$ for $j=2,3,\cdots,m-1,m$
and $\mathcal{N}_{d_1}(g)=\left\{\begin{matrix}r_1\ ,&d_1>1\\r_1+1\ ,&d_1=1\end{matrix}\right.$.}
\end{example}
\end{proof}

\end{document}